\newtheorem{theorem}{Theorem}[section]
\newtheorem{lemma}[theorem]{Lemma}
\newtheorem{corollary}[theorem]{Corollary}
\newtheorem{remark}[theorem]{Remark}
\newtheorem{example}[theorem]{Example}
\newtheorem{proposition}[theorem]{Proposition}
\theoremstyle{definition}
\newtheorem{definition}[theorem]{Definition}
\numberwithin{equation}{section}
\begin{document}

\title[A new approach to  Nikolskii--Besov classes]
{A new approach to  Nikolskii--Besov classes}

\author[Vladimir I. Bogachev et al.]{Vladimir~I.~Bogachev}
\address{}
\curraddr{}
\email{}
\thanks{
This research was supported by the Russian Science Foundation Grant 17-11-01058
at Lo\-mo\-no\-sov
Moscow State University.}

\author[]{Egor~D.~Kosov}
\address{}
\curraddr{}
\email{}
\thanks{The second author is a Young
Russian Mathematics award winner and would like to thank its sponsors and jury.}

\author[]{Svetlana N. Popova}
\address{}
\curraddr{}
\email{}
\thanks{}

\date{}

\maketitle

\begin{abstract}
We give a new characterization of   Nikolskii--Besov classes of functions of fractional smoothness  by means
of a nonlinear integration by parts formula in the form of a nonlinear inequality.
A similar characterization is obtained for
Nikolskii--Besov classes with respect to Gaussian measures on finite- and infinite-dimensional spaces.
\end{abstract}

\noindent
Keywords: Nikolskii--Besov class, integration by parts formula, fractional Sobolev class, Ornstein--Uhlenbeck semigroup

\noindent
MSC: primary  46E35, secondary 28C20, 46G12

\section{Introduction}

Nikolskii--Besov spaces play a very important role in the most
diverse aspects of analysis and applications
(see \cite{AdamsF}, \cite{BIN}, \cite{Leoni}, \cite{Nikol77}, \cite{Stein}, and~\cite{Triebel}).
Our recent papers
\cite{BKZd}, \cite{BKZ}, and \cite{Kos} on distributions of polynomials in Gaussian
random variables show that Nikolskii--Besov
spaces arise naturally also in the study of probability
distributions: it turns out that
the distribution densities of nonconstant polynomials in Gaussian random variables belong
to Nikolskii--Besov classes. Moreover, an extension of this result
to polynomials on spaces with general logarithmically concave measures
is obtained in~\cite{Kos}.
This crucial smoothness property
leads to many important consequences including
a sharper version of the known Nourdin--Poly estimate from~\cite{NourPol}.
See the survey  \cite{B16} for a detailed discussion.

In this paper, motivated by our  cited papers,  we suggest a new approach
to Nikolskii--Besov classes based on certain ``nonlinear integration by parts formulae''
in the form of nonlinear inequalities. In particular,
we establish a new characterization of
Nikolskii--Besov spaces by means of integration by parts such that
the classical description of the class $BV$ of functions of bounded variation
(see  \cite{AdamsF}, \cite{AFP}, and~\cite{Ziemer}) becomes a partial case.

We recall  (see \cite{BIN}, \cite{Nikol77}, and \cite{Stein})
that the Nikolskii--Besov space $B^\alpha_p(\mathbb{R}^n)$ with $p\in [1,+\infty)$
and $\alpha\in (0,1]$
consists of all functions $f\in L^p(\mathbb{R}^n)$
for which there is a constant $C$ such that for every $h\in \mathbb{R}^n$ one has
$$
\biggl(\int_{\mathbb{R}^d} |f(x+h)-f(x)|^p\, dx\biggr)^{1/p} \le  C|h|^\alpha,
$$
or
$$
\|f_h-f\|_p\le C|h|^\alpha,
$$
using the notation $f_h(x) := f(x-h)$.
The class $B^\alpha_p(\mathbb{R}^n)$ is a special case
of a general Besov class $B^\alpha_{p,\theta}(\mathbb{R}^n)$
with $\theta=\infty$.

The Nikolskii--Besov norm is defined by
$$
\|f\|_{B^\alpha_p(\mathbb{R}^n)} := \|f\|_p + \|f\|_{p, \alpha},
\quad \text{where } \|f\|_{p, \alpha}:=\sup_h |h|^{-\alpha}\|f_h-f\|_p.
$$
Note that for $\alpha=1$ we define   $B^1_p(\mathbb{R}^n)$ in the same way, which
differs from the classical Besov--Nikolskii space defined through
 the symmetrized difference $f(x+h)+f(x-h)-2f(x)$.
In particular, for $p=1$ this definition leads to the class~$BV$ of functions of bounded variation,
but not to the broader classical Nikolskii--Besov space.

Our first main result (presented in Section~\ref{sect2}) asserts that a function $f\in L^p(\mathbb{R})$
belongs to the class $B^\alpha_p(\mathbb{R})$ with
$0<\alpha\le 1$ precisely when there is a number $C$ such that
$$
\int \varphi'(x)f(x)\, dx \le C \|\varphi\|_q^\alpha \|\varphi'\|_q^{1-\alpha}
\quad \forall \, \varphi\in C_0^\infty(\mathbb{R}), \ \hbox{where } 1/p+1/q=1.
$$
In Section \ref{sect3} we extend the stated result to the multidimensional case in two different ways.
Firstly,
a function $f\in L^p(\mathbb{R}^n)$ belongs to the class $B^\alpha_p(\mathbb{R}^n)$ with
$0<\alpha\le 1$ if and only if
there is a number $C$ such that for every unit vector $e$ one has
$$
\int \partial_e\varphi(x)f(x)\, dx \le C \|\varphi\|_q^\alpha \|\partial_e\varphi\|_q^{1-\alpha}
\quad \forall \, \varphi\in C_0^\infty(\mathbb{R}^n).
$$
Secondly, another equivalent description is this:
there is a constant $C$ such that
$$
\int_{\mathbb{R}^n} \mathrm{div}\Phi(x) f(x)\, dx \le C\|\Phi\|^{\alpha}_q\|\mathrm{div}\Phi\|^{1-\alpha}_q
$$
for any vector field
$\Phi$ of class $C_0^\infty(\mathbb{R}^n, \mathbb{R}^n)$, where $q= p/(p-1)$.
This result opens a way to introducing Nikolskii--Besov classes on Riemannian
manifolds by means of similar nonlinear
integration by parts formulae with vector fields.
We note that the quantity
$$
V^{p,\alpha}(f):=
\sup \Bigl\{\|\Phi\|^{-\alpha}_q\|\mathrm{div}\Phi\|^{\alpha-1}_q
\int_{\mathbb{R}^n} \mathrm{div}\Phi(x) f(x)\, dx \Bigr\},
$$
where the supremum is taken over all vector fields $\Phi$ of class $C_0^\infty(\mathbb{R}^n, \mathbb{R}^n)$
with non-identically zero divergence,
is a fractional analog of the variation of a function of bounded variation.

In addition, as an application, in Section \ref{sect3} we  give a characterization of Nikolskii--Besov classes in terms
of the behavior of the heat semigroup
near zero, which  generalizes a classical result on functions of bounded variation
due to De Giorgi~\cite{DeG} and is close to the known characterizations
from \cite{Taib} and~\cite{Triebel}.

In Section \ref{sect4} we discuss the directional Nikolskii--Besov smoothness of functions on $\mathbb{R}^n$.
Here we construct an example
of a function $f$ on $\mathbb{R}^2$ such that it is Nikolskii--Besov smooth in the first variable,
but, for almost every fixed second variable~$y$, the
function $x\mapsto f(x,y)$ does not belong to the same Nikolskii--Besov class on the real line.
This surprising fact exhibits some difference between
Nikolskii--Besov classes  and Sobolev (or $BV$) classes, for which almost all restrictions retain
membership in the respective class.

Our approach, developed in Sections \ref{sect2} and \ref{sect3}, enables one to define
and study Nikolskii--Besov classes on Gaussian spaces
similarly to the approach of  \cite{FH}, \cite{AMMP1}, and \cite{AMMP2}
for the class of functions of bounded variation, where in place
of the classical divergence operator on $\mathbb{R}^n$ the Gaussian divergence operator is used.
This is done in Section~\ref{sect5} and Section~\ref{sect6}.
We deal with Nikolskii--Besov classes with respect to Gaussian measures in two steps.
Firstly, in Section~\ref{sect5}, we consider the finite-dimensional case, in which
all major technical and computational problems already occur.
Next, in Section~\ref{sect6}, we proceed to the infinite-dimensional case
by means of conditional expectations.
Our first main result concerning Gaussian Nikolskii--Besov classes, stated in Theorem~\ref{T3.2}, gives a
characterization via the Ornstein--Uhlenbeck semigroup,
which is a natural Gaussian  analog of the heat semigroup defined on $\mathbb{R}^n$ with Lebesgue measure.
We also obtain (in Theorem~\ref{T3.1}) a bound
for the rate of approximation by the Ornstein--Uhlenbeck semigroup
for functions from Nikolskii--Besov spaces.
From this estimate we derive a Poincar\'e-type inequality, where the fractional variation is used
in place of the norm of the gradient of a function. In addition, we obtain a
Gaussian analog of the fractional Hardy--Landau--Littlewood inequality (obtained in \cite{BKZ}) which generalizes
some results from
\cite{BWSH}, where such an inequality was obtained for the class of functions of bounded variation.

Finally, in Section~\ref{sect7} we introduce a natural analog of the Nikolskii--Besov smoothness
for measures on infinite-dimensional spaces.
The idea is  simple. The Skorohod
differentiability of a measure $\mu$ along a vector $h$ means that the mapping $t\mapsto \mu_{th}$
is Lipschitz with respect to the total variation distance,
where $\mu_{th}=\mu(\cdot-th)$ is the shifted measure.
If we now impose the $\alpha$-H\"older continuity in place of the Lipschitz condition,
we arrive at the definition of the Nikolskii--Besov $\alpha$-smoothness of a measure.
We prove that the space of all vectors of the Nikolskii--Besov $\alpha$-smoothness of a nonzero Radon measure
on a locally convex space is a complete metric vector space with respect to the distance
generated by the fractional directional variation and the embedding of this space into the
original space is compact.

A generalization of our new characterization to
the case of general Besov spaces, both classical and with respect to Gaussian measures,
will be considered in the forthcoming paper of the second author,
along with applications  to embedding theorems.

We thank O.V. Besov and B.S. Kashin for useful discussions.

Let us introduce some notation.
Throughout the paper
$C_0^\infty(\mathbb{R}^n)$ denotes the space
of all infinitely differentiable functions with compact support on $\mathbb{R}^n$ and
$C_b^\infty(\mathbb{R})$ denotes the space of all bounded infinitely
differentiable functions with bounded derivatives of every order.
For a function $f$ on $\mathbb{R}^n$ its $L^p$-norm is defined in the usual way
and in Sections \ref{sect2}, \ref{sect3}, and \ref{sect4} we will use the notation
$$
\|f\|_p := \biggl(\int_{\mathbb{R}^n} |f(x)|^p\, dx\biggr)^{1/p},\quad p\in[1,\infty),
\quad
\|f\|_\infty :=\sup_{x\in\mathbb{R}^n}|f(x)|.
$$
However, in Section \ref{sect5} and Section \ref{sect6} this notation will be
used by Gaussian measures in place of Lebesgue measure.

\section{Nikolskii--Besov classes on the real line}\label{sect2}

We start with the case of the real line to illustrate our approach to  Nikolskii--Besov classes
based on ``non-linear integration by parts''.

Throughout this section we assume that $1/p+1/q=1$, $p\in [1,\infty)$ and $\alpha$ is a fixed
number in the interval~$(0, 1]$. Infima over empty sets are by definition~$+\infty$.

\begin{definition}\label{D0.1}
Let $f\in L^p(\mathbb{R})$. Let $V^{p, \alpha}(f)$ be the infimum of all numbers $C$ such that
$$
\int_{\mathbb{R}} \varphi'(x) f(x)\, dx\le C\|\varphi\|_q^\alpha\|\varphi'\|_q^{1-\alpha}
\quad
\forall\, \varphi\in C_0^\infty(\mathbb{R}).
$$
\end{definition}

We observe that if the above estimate holds for all functions $\varphi\in C_0^\infty(\mathbb{R})$,
then by approximation it will be also valid for all functions $\varphi\in C^\infty(\mathbb{R})$
such that $\varphi\in L^q(\mathbb{R})$ and $\varphi'\in L^q(\mathbb{R})$.

The following theorem shows that the inclusion of a function $f$ to the class
$B^\alpha_p(\mathbb{R})$ is equivalent to the condition that the quantity $V^{p, \alpha}(f)$ is finite.
One implication from this theorem has been used in \cite{BKZ} and \cite{Kos}  to prove that, for any nonconstant
polynomial $f$ of degree $d$ on $\mathbb{R}^n$ and independent standard Gaussian random variables $\xi_1,\ldots,\xi_n$,
the distribution of the random variable $f(\xi_1,\ldots,\xi_n)$ has a density of class~$B^{1/d}_1$
and the order $1/d$ is sharp.

\begin{theorem}\label{T0.1}
Let $f\in L^p(\mathbb{R})$. Then
$f\in B^\alpha_p(\mathbb{R})$ if and only if
$V^{p,\alpha}(f)<\infty$.

Moreover,
$$
2^{\alpha -1}\|f\|_{p,\alpha} \le V^{p, \alpha}(f)\le ((1+\alpha)^{-1}+1)\|f\|_{p,\alpha}.
$$
\end{theorem}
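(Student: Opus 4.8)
The plan is to establish both inequalities in the chain $2^{\alpha-1}\|f\|_{p,\alpha} \le V^{p,\alpha}(f) \le ((1+\alpha)^{-1}+1)\|f\|_{p,\alpha}$ directly; the equivalence of $f \in B^\alpha_p(\mathbb{R})$ with finiteness of $V^{p,\alpha}(f)$ then follows immediately, since $\|f\|_{p,\alpha} < \infty$ is precisely membership in $B^\alpha_p(\mathbb{R})$.

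For the lower bound $V^{p,\alpha}(f) \ge 2^{\alpha-1}\|f\|_{p,\alpha}$, I would assume $V^{p,\alpha}(f) = C < \infty$ and extract information about $\|f_h - f\|_p$ by feeding into the defining inequality a cleverly chosen test function. The natural idea is to take $\varphi = \psi_h - \psi$ (or a smooth, compactly supported truncation thereof), where $\psi$ is chosen so that $\varphi'$ approximates a near-optimal dual element for $f_h - f$ in $L^p$ versus $L^q$ duality; more precisely, pick $\psi$ with $\psi'$ close to $g$ where $g \in L^q$, $\|g\|_q \le 1$, nearly attains $\int (f_h - f) g\,dx \approx \|f_h-f\|_p$. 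Then $\int \varphi' f\,dx = \int \psi'(f - f_{-h})\,dx$ up to sign conventions, which is comparable to $\|f_h - f\|_p$, while on the right-hand side $\|\varphi\|_q = \|\psi_h - \psi\|_q \le$ something like $|h|\,\|\psi'\|_q$ for smooth $\psi$ — but that would give too strong a bound, so instead one uses $\|\psi_h - \psi\|_q \le 2\|\psi\|_q$ to get the $\alpha$-part and $\|\varphi'\|_q = \|\psi'_h - \psi'\|_q$. Balancing these two estimates for $\|\varphi\|_q$ — namely $\min(2\|\psi\|_q, |h|\|\psi'\|_q)$ type reasoning applied through the product $\|\varphi\|_q^\alpha\|\varphi'\|_q^{1-\alpha}$ — and optimizing over a scaling parameter in $\psi$ should produce the factor $2^{\alpha-1}|h|^\alpha$. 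A cleaner route: take $\psi$ itself as a mollification of a step-like function adapted to scale $|h|$, so that $\|\psi_h - \psi\|_q \lesssim 2^{?}$ and $\|\psi'\|_q$ scales like a negative power of $|h|$.

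For the upper bound, I would start from $f \in B^\alpha_p(\mathbb{R})$ with $\|f_h - f\|_p \le \|f\|_{p,\alpha}|h|^\alpha$ and estimate $\int \varphi' f\,dx$ for arbitrary $\varphi \in C_0^\infty(\mathbb{R})$. Write $\varphi'(x) = \lim_{h\to 0} h^{-1}(\varphi(x) - \varphi(x-h))$ and use the standard telescoping/averaging trick: $\int \varphi' f\,dx = t^{-1}\int (\varphi_{t} - \varphi) f\, dx + (\text{error from Taylor})$, or better, integrate the identity $\varphi(x) - \varphi(x-t) = \int_0^t \varphi'(x-s)\,ds$ against $f$, rearrange to isolate $\int \varphi' f\,dx$, and bound the resulting difference $\int \varphi'(x-s) f(x)\,dx = \int \varphi'(x) f(x+s)\,dx$ by splitting off $\int \varphi' f\,dx$ and controlling $\int \varphi' (f_{-s} - f)\,dx \le \|\varphi'\|_q \|f_{-s}-f\|_p \le \|\varphi'\|_q \|f\|_{p,\alpha} |s|^\alpha$. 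This yields, for each $t > 0$, a bound of the form $\int \varphi' f\,dx \le t^{-1}\|\varphi\|_q\|f\|_p \cdot(\text{something}) + \|\varphi'\|_q\|f\|_{p,\alpha} \int_0^t s^\alpha\,ds / t$; the second term contributes $\|\varphi'\|_q \|f\|_{p,\alpha} t^\alpha/(1+\alpha)$. Actually the first term should instead come out as $t^{-1}$ times $\int(\varphi_t-\varphi)f\,dx \le t^{-1}\|\varphi_t - \varphi\|_? $ — so I would bound $|\int(\varphi_t - \varphi)f| \le \|f\|_p \|\varphi_t - \varphi\|_q$... but that reintroduces a difference of $\varphi$; the right move is rather to bound it by $2\|f\|_{?}$— careful bookkeeping is needed here. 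Once the two terms are $A/t + B t^\alpha$ in shape with $A \sim \|\varphi\|_q \|f\|_{p,\alpha}$ hidden appropriately and $B \sim \|\varphi'\|_q\|f\|_{p,\alpha}$, optimizing over $t > 0$ gives a constant multiple of $\|f\|_{p,\alpha}\|\varphi\|_q^{\alpha}\|\varphi'\|_q^{1-\alpha}$, and tracking constants should yield exactly $(1+\alpha)^{-1}+1$.

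The main obstacle I anticipate is the lower bound: constructing a test function $\varphi$ whose difference quotient genuinely "sees" $\|f_h - f\|_p$ while keeping both $\|\varphi\|_q$ and $\|\varphi'\|_q$ under simultaneous control with the sharp constant $2^{\alpha-1}$ is delicate, because $f$ is merely $L^p$ and one cannot differentiate it. The resolution is presumably to work with $\varphi$ of the form (mollified) $\mathbf{1}_{[a,a+h]}$-primitive, i.e. $\varphi' \approx \delta$-like bumps at $a$ and $a+h$, turning $\int \varphi' f$ into $\approx f(a+h) - f(a)$ in a distributional/averaged sense, then integrating over $a$ against a dual weight for the $L^p$ norm of $f_h - f$; the factor $2$ in $2^{\alpha-1}$ should emerge from $\|\varphi'\|_q$ being the $L^q$ norm of a sum of two bumps of opposite sign. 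One must also handle the approximation argument (passing from $C^\infty$ with $\varphi, \varphi' \in L^q$ back to $C_0^\infty$) noted right after Definition~\ref{D0.1}.
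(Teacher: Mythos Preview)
Your upper-bound argument has a real gap. The splitting idea is right and matches the paper: one writes
\[
\int \varphi' f\,dx \;=\; t^{-1}\int_0^t\!\!\int \varphi'(x)\bigl[f(x)-f(x-s)\bigr]\,dx\,ds \;+\; t^{-1}\int\bigl[\varphi(x+t)-\varphi(x)\bigr]f(x)\,dx,
\]
and the first term is at most $(1+\alpha)^{-1}\|\varphi'\|_q\|f\|_{p,\alpha}\,t^\alpha$, as you say. But your proposed treatment of the second term via $\|f\|_p\,\|\varphi_t-\varphi\|_q$ does not close the argument: the resulting shape $A/t + Bt^\alpha$ optimizes to a constant times $A^{\alpha/(1+\alpha)}B^{1/(1+\alpha)}$, which carries the wrong exponents on $\|\varphi\|_q$ and $\|\varphi'\|_q$. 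The step you are missing is simply to transfer the shift back to $f$:
\[
\int\bigl[\varphi(x+t)-\varphi(x)\bigr]f(x)\,dx \;=\; \int\varphi(x)\bigl[f(x-t)-f(x)\bigr]\,dx \;\le\; \|\varphi\|_q\,\|f\|_{p,\alpha}\,t^\alpha,
\]
so the second term is bounded by $\|\varphi\|_q\,\|f\|_{p,\alpha}\,t^{\alpha-1}$. With the two powers $t^\alpha$ and $t^{\alpha-1}$, taking $t=\|\varphi\|_q/\|\varphi'\|_q$ (no optimization needed) yields the constant $(1+\alpha)^{-1}+1$ exactly.

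For the lower bound your first instinct is correct and is precisely the paper's construction, so you should not abandon it for mollified indicators. Given $\varphi\in C_0^\infty$ with $\|\varphi\|_q\le 1$, set $\psi(x)=\int_0^{h}\varphi(x+s)\,ds$ (this is your ``$\psi_h-\psi$ with $\psi'=\varphi$'', up to a sign and direction of shift). Then $\psi'(x)=\varphi(x+h)-\varphi(x)$ and $\int\psi' f\,dx=\int\varphi\,(f_h-f)\,dx$. The estimate you flagged as ``too strong'' is exactly the right one: $\|\psi\|_q\le |h|\,\|\varphi\|_q\le|h|$, together with $\|\psi'\|_q\le 2\|\varphi\|_q\le 2$, gives
\[
\int\varphi\,(f_h-f)\,dx \;=\; \int\psi' f\,dx \;\le\; V^{p,\alpha}(f)\,|h|^\alpha\, 2^{1-\alpha},
\]
and taking the supremum over such $\varphi$ yields $\|f_h-f\|_p\le 2^{1-\alpha}V^{p,\alpha}(f)\,|h|^\alpha$ directly. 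No scaling parameter or optimization is involved, and the alternative bound $\|\psi_h-\psi\|_q\le 2\|\psi\|_q$ you mention would be useless here, since an antiderivative of a generic $L^q$ function need not lie in $L^q$.
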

\begin{proof}
Suppose that $V^{p,\alpha}(f)<\infty$.
Let $h>0$.
It is easy to see that
\begin{align*}
\|f_h-f\|_p &=\sup_{\substack{\varphi\in C_0^\infty\\ \|\varphi\|_q \le1}}
\int_{\mathbb{R}} \varphi(x)(f_h(x)-f(x))\, dx
=\sup_{\substack{\varphi\in C_0^\infty\\ \|\varphi\|_q \le1}}
\int_{\mathbb{R}} [\varphi(x+h)-\varphi(x)] f(x)\, dx
\\
&=
\sup_{\substack{\varphi\in C_0^\infty\\ \|\varphi\|_q \le1}}
\int_{\mathbb{R}} \int_0^{h}\varphi'(x+s) ds f(x) \, dx.
\end{align*}
Let us consider the following function of class $C_0^\infty(\mathbb{R})$:
$$
\psi(x)=\int_0^{h}\varphi(x+s)\, ds  .
$$
Note that $\|\psi\|_q \le |h|\|\varphi\|_q$
and that
$$
|\psi'(x)|=\biggl|\int_0^{h}\varphi'(x+se)\, ds\biggr|
=|\varphi(x+h)-\varphi(x)|,
$$
which yields the estimate $\|\psi'\|_q\le 2\|\varphi\|_q$.
By the assumptions of the theorem we have
$$
\int_{\mathbb{R}}\psi'(x)f(x)\, dx\le V^{p, \alpha}(f)\|\psi\|_q^\alpha\|\psi'\|_q^{1-\alpha}
\le 2^{1-\alpha}V^{p, \alpha}(f)|h|^\alpha\|\varphi\|_q.
$$
Therefore,
$$
\|f_h-f\|_p\le2^{1-\alpha}V^{p, \alpha}(f)|h|^\alpha,
$$
which completes the proof of one implication.

We now assume that $f\in B^\alpha_p(\mathbb{R})$.
For any function $\varphi\in C_0^\infty(\mathbb{R})$ we have
\begin{align*}
&\int_{\mathbb{R}}\varphi'(x)f(x)\, dx
=
\int_{\mathbb{R}}\varphi'(x)\bigl[f(x) - t^{-1}\int_0^tf(x-s)\, ds\bigr]\, dx
+
\int_{\mathbb{R}}\varphi'(x) t^{-1}\int_0^tf(x-s)\, ds\, dx
\\
&=t^{-1}\int_0^t\int_{\mathbb{R}}\varphi'(x)\bigl[f(x) -f(x-s)\bigr]\, dx \, ds
+
\int_{\mathbb{R}}t^{-1}\int_0^t\varphi'(x+s)\, ds\, f(x)\, dx
\\
&=
t^{-1}\int_0^t\int_{\mathbb{R}}\varphi'(x)\bigl[f(x) -f(x-s)\bigr]\, dx \, ds
+
\int_{\mathbb{R}}t^{-1}[\varphi(x+t) - \varphi(x)]f(x)\, dx
\\
&\le
\|\varphi'\|_q\|f\|_{p,\alpha} t^{-1}\int_0^t s^\alpha \, ds
+
t^{-1}\|\varphi\|_q\|f\|_{p,\alpha} t^\alpha
=
(1+\alpha)^{-1}\|\varphi'\|_q\|f\|_{p,\alpha} t^\alpha
+
\|\varphi\|_q\|f\|_{p,\alpha} t^{\alpha-1}.
\end{align*}
Now we take $t = \|\varphi\|_q\|\varphi'\|_q^{-1}$ and obtain the estimate
$$
\int_{\mathbb{R}}\varphi'(x)f(x)\, dx\le
((1+\alpha)^{-1}+1)\|f\|_{p,\alpha}\|\varphi\|_q^\alpha\|\varphi'\|_q^{1-\alpha},
$$
which completes the proof.
\end{proof}

\begin{remark}
{\rm
In case $\alpha=1$ and $p>1$,  the condition $V^{p,1}(f)<\infty$ is equivalent
to the inclusion of $f$ to the Sobolev class $W^{p,1}(\mathbb{R})$ (which consists
of all functions in $L^p(\mathbb{R})$ possessing locally absolutely continuous versions
with derivatives in $L^p(\mathbb{R})$).

In case $\alpha=1$  and $p=1$, the condition $V^{1,1}(f)<\infty$ is equivalent
to the inclusion of $f$ to the class $BV$ of integrable functions of bounded variation
(which consists of all integrable functions whose generalized derivatives are bounded measures).
}\end{remark}

\section{Nikolskii--Besov classes on $\mathbb{R}^n$}\label{sect3}

We proceed to  Nikolskii--Besov classes on $\mathbb{R}^n$ with Lebesgue measure.
Similarly to the one-dimensional case, we
obtain an equivalent description of Nikolskii--Besov spaces by means of ``nonlinear integration by parts''.
In addition,  we obtain a
characterization in terms of the heat semigroup.

We again assume that $1/p+1/q=1$, $p\in [1,\infty)$ and $\alpha$ is a fixed
number in the interval~$(0, 1]$.

Let
$\langle\cdot, \cdot\rangle$ be
the standard inner product on $\mathbb{R}^n$
and let $|\cdot|$ denote the corresponding norm.
Let $\{P_t\}_{t\ge0}$ denote the heat semigroup on $\mathbb{R}^n$ defined by
$$
P_tf(x) := (2\pi t)^{-n/2}\int_{\mathbb{R}^n}f(y)\exp\Bigl(-\frac {|x-y|^2}{2t}\Bigr)\, dy.
$$

We observe that Definition \ref{D0.1} admits two reasonable generalizations to the multidimensional case.
This is due to the fact that in the multidimensional case the function
 $\varphi'$ can be understood in two different ways:
as a partial derivative $\partial_e\varphi$ and as the divergence of a vector field ${\rm div}\Phi$.
This circumstance leads to the following two definitions.

\begin{definition}\label{D1.1}
Let $f\in L^p(\mathbb{R}^n)$. Let $V^{p, \alpha}(f)$ be the infimum of all numbers $C$ such that
$$
\int_{\mathbb{R}^n} \mathrm{div}\Phi(x) f(x) \, dx \le C\|\Phi\|^\alpha_q\|\mathrm{div}\Phi\|^{1-\alpha}_q
\quad
\forall\, \Phi\in C_0^\infty(\mathbb{R}^n, \mathbb{R}^n).
$$
\end{definition}

Clearly, the above estimate for vector fields of class $C_0^\infty(\mathbb{R}^n, \mathbb{R}^n)$
extends by approximation to all vector fields $\Phi=(\Phi_i)$ of class $C^\infty(\mathbb{R}^n, \mathbb{R}^n)$
such that $\Phi_i\in L^q(\mathbb{R}^n)$ and $\partial_{x_j}\Phi_i\in L^q(\mathbb{R}^n)$ for each~$j$ and~$i$.

\begin{definition}\label{D1.2}
Let $f\in L^p(\mathbb{R}^n)$. Let $V_0^{p, \alpha}(f)$ be the infimum of all numbers $C$ such that
$$
\int_{\mathbb{R}^n} \partial_e\varphi(x) f(x)\, dx\le C\|\varphi\|_q^\alpha\|\partial_e\varphi\|_q^{1-\alpha}
$$
for every function $\varphi\in C_0^\infty(\mathbb{R}^n)$
and every unit vector $e\in\mathbb{R}^n$.
\end{definition}

Obviously, this estimate extends to functions $\varphi\in C^\infty(\mathbb{R}^n)$
such that $\varphi\in L^q(\mathbb{R}^n)$ and $\partial_{x_j}\varphi\in~L^q(\mathbb{R}^n)$ for all~$j$.

The equivalence of the condition $V_0^{p, \alpha}(f)<\infty$ and the
inclusion $f\in B^\alpha_p(\mathbb{R}^n)$ can be verified similarly
to the proof of Theorem~\ref{T0.1}
(we provide details below).
The equivalence of these conditions to the condition $V^{p, \alpha}(f)<\infty$ is less obvious.
Here we need the following simple bound.

\begin{lemma}\label{lem1.1}
For any function $f\in B^\alpha_p(\mathbb{R}^n)$
one has
$$
\|f-P_tf\|_p\le c_{\alpha, n}\|f\|_{p,\alpha}t^{\alpha/2}, \quad
\hbox{where } c_{\alpha, n} = (2\pi)^{-n/2}\int_{\mathbb{R}^n} |z|^\alpha e^{-\frac{|z|^2}{2}}\, dz.
$$
\end{lemma}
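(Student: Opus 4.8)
The plan is to realize $P_tf$ as a Gaussian average of translates of $f$ and then to use the defining H\"older-in-$L^p$ bound for the class $B^\alpha_p(\mathbb{R}^n)$. First I would perform the change of variables $y = x - \sqrt{t}\,z$ in the integral defining $P_tf$, which turns it into
$$
P_tf(x) = (2\pi)^{-n/2}\int_{\mathbb{R}^n} f(x - \sqrt{t}\,z)\, e^{-|z|^2/2}\, dz
= (2\pi)^{-n/2}\int_{\mathbb{R}^n} f_{\sqrt{t}\,z}(x)\, e^{-|z|^2/2}\, dz,
$$
recalling the notation $f_h(x) = f(x-h)$. Since the standard Gaussian density integrates to $1$, we may write $f(x) = (2\pi)^{-n/2}\int f(x)\, e^{-|z|^2/2}\, dz$, and subtracting gives
$$
f(x) - P_tf(x) = (2\pi)^{-n/2}\int_{\mathbb{R}^n} \bigl[f(x) - f_{\sqrt{t}\,z}(x)\bigr]\, e^{-|z|^2/2}\, dz.
$$

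The second step is to take the $L^p(\mathbb{R}^n)$ norm of both sides and apply Minkowski's integral inequality (the triangle inequality for $L^p$-valued integrals) to bring the norm under the $z$-integral:
$$
\|f - P_tf\|_p \le (2\pi)^{-n/2}\int_{\mathbb{R}^n} \bigl\|f - f_{\sqrt{t}\,z}\bigr\|_p\, e^{-|z|^2/2}\, dz.
$$
Now I would invoke the definition of $\|f\|_{p,\alpha}$, which gives $\|f - f_h\|_p \le \|f\|_{p,\alpha}|h|^\alpha$ for every $h$; applying this with $h = \sqrt{t}\,z$ yields $\|f - f_{\sqrt{t}\,z}\|_p \le \|f\|_{p,\alpha}\, t^{\alpha/2}|z|^\alpha$. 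Substituting this bound produces
$$
\|f - P_tf\|_p \le \|f\|_{p,\alpha}\, t^{\alpha/2}\, (2\pi)^{-n/2}\int_{\mathbb{R}^n} |z|^\alpha e^{-|z|^2/2}\, dz,
$$
which is exactly the claimed inequality with the stated constant $c_{\alpha,n}$.

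The only point requiring any care is the justification of Minkowski's integral inequality in this setting, i.e. that the map $z \mapsto f - f_{\sqrt t z}$ is a well-defined, strongly measurable $L^p$-valued function against the Gaussian weight with integrable norm; this is routine since $z \mapsto f_{\sqrt t z}$ is continuous from $\mathbb{R}^n$ into $L^p(\mathbb{R}^n)$ and its norm difference is dominated by the integrable function $z \mapsto \|f\|_{p,\alpha} t^{\alpha/2}|z|^\alpha e^{-|z|^2/2}$. Equivalently, one can avoid the vector-valued formulation altogether by pairing $f - P_tf$ with an arbitrary $\varphi \in C_0^\infty(\mathbb{R}^n)$ with $\|\varphi\|_q \le 1$, using Fubini's theorem to interchange the $z$- and $x$-integrals, estimating the inner integral by $\|f\|_{p,\alpha} t^{\alpha/2}|z|^\alpha$ via duality, and then taking the supremum over such $\varphi$. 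I do not expect any genuine obstacle here; the lemma is essentially a direct computation once $P_tf$ is written as a Gaussian average of translates.
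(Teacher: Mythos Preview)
Your proof is correct and essentially the same as the paper's. The paper carries out precisely the duality variant you describe at the end (pairing with $\varphi\in C_0^\infty$, $\|\varphi\|_q\le 1$, applying Fubini, and bounding the inner integral by $\|f\|_{p,\alpha}t^{\alpha/2}|z|^\alpha$), while your primary presentation via Minkowski's integral inequality is the standard equivalent formulation.
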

\begin{proof}
We have
\begin{align*}
\|f - P_t f\|_p &= \sup\limits_{\substack{\varphi\in C_0^\infty\\ \|\varphi\|_q \le1}}
\int_{\mathbb{R}^n} \varphi(x)\biggl(f(x) - \int_{\mathbb{R}^n} f(x+\sqrt{t}z)(2\pi)^{-n/2}e^{-\frac{|z|^2}{2}}\,dz\biggr)\,dx
\\
&=\sup\limits_{\substack{\varphi\in C_0^\infty\\ \|\varphi\|_q \le1}}
\int_{\mathbb{R}^n}  (2\pi)^{-n/2}e^{-\frac{|z|^2}{2}}
\int_{\mathbb{R}^n} \varphi(x)\bigl(f(x)-f(x+\sqrt{t}z)\bigr)\,dx\,dz
\\
&\le
\|f\|_{p,\alpha}
t^{\alpha/2}(2\pi)^{-n/2}\int_{\mathbb{R}^n} |z|^\alpha e^{-\frac{|z|^2}{2}}\,dz,
\end{align*}
as announced.
\end{proof}

We now give an equivalent description of the Nikolskii--Besov spaces
in terms of nonlinear integration by parts inequalities.

\begin{theorem}\label{T1.1}
Let $f\in L^p(\mathbb{R}^n)$. The following conditions are equivalent{\rm:}

{\rm\rm(i)} $f\in B^\alpha_p(\mathbb{R}^n)${\rm;}

{\rm(ii)} $V^{p,\alpha}(f)<\infty${\rm;}

{\rm\rm(iii)} $V_0^{p,\alpha}(f)<\infty$.

Moreover,
$$
2^{\alpha-1}\|f\|_{p,\alpha}\le V_0^{p, \alpha}(f)\le V^{p, \alpha}(f)\le C(n, \alpha)\|f\|_{p,\alpha},
$$
where
$$
C(n,\alpha) = (2\pi)^{-n/2}\int_{\mathbb{R}^n}
 |z|^\alpha e^{-\frac{|z|^2}{2}}dz + (2\pi)^{-n/2}\int_{\mathbb{R}^n} |z|^{1+\alpha}e^{-\frac{|z|^2}{2}}\,
 dz \le \sqrt{n} + n.
$$
\end{theorem}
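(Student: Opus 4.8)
The plan is to establish the chain of inequalities
$2^{\alpha-1}\|f\|_{p,\alpha}\le V_0^{p,\alpha}(f)\le V^{p,\alpha}(f)\le C(n,\alpha)\|f\|_{p,\alpha}$,
which simultaneously yields the equivalence of (i), (ii), (iii). The middle inequality
$V_0^{p,\alpha}(f)\le V^{p,\alpha}(f)$ is immediate from the definitions: given a unit vector $e$ and
$\varphi\in C_0^\infty(\mathbb{R}^n)$, the vector field $\Phi=\varphi e$ has divergence
$\partial_e\varphi$ and $\|\Phi\|_q=\|\varphi\|_q$, so the defining inequality for $V^{p,\alpha}$ applied to
this $\Phi$ is exactly the defining inequality for $V_0^{p,\alpha}$ tested on $(\varphi,e)$. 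Hence any
admissible $C$ in Definition~\ref{D1.1} is admissible in Definition~\ref{D1.2}.

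For the first inequality $2^{\alpha-1}\|f\|_{p,\alpha}\le V_0^{p,\alpha}(f)$, I would repeat verbatim the
one-dimensional argument from the proof of Theorem~\ref{T0.1}, carried out in a single coordinate direction.
Fix a unit vector $e$ and $h>0$; writing $f_{he}(x)=f(x-he)$, duality in $L^p$ gives
$\|f_{he}-f\|_p=\sup_{\|\varphi\|_q\le1}\int[\varphi(x+he)-\varphi(x)]f(x)\,dx
=\sup\int\int_0^h\partial_e\varphi(x+se)\,ds\,f(x)\,dx$, and for
$\psi(x)=\int_0^h\varphi(x+se)\,ds$ one has $\|\psi\|_q\le h\|\varphi\|_q$ and
$|\partial_e\psi(x)|=|\varphi(x+he)-\varphi(x)|$, whence $\|\partial_e\psi\|_q\le2\|\varphi\|_q$. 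Plugging
$\psi$ into the $V_0^{p,\alpha}$ inequality and optimizing exactly as before gives
$\|f_{he}-f\|_p\le 2^{1-\alpha}V_0^{p,\alpha}(f)h^\alpha$. Since every increment vector is of the form
$h e$ with $h=|h|$, taking the supremum over all increments yields the claim. This also shows
(iii)$\Rightarrow$(i), and combined with the middle inequality, (ii)$\Rightarrow$(i).

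The substantive step is the last inequality $V^{p,\alpha}(f)\le C(n,\alpha)\|f\|_{p,\alpha}$, i.e.
(i)$\Rightarrow$(ii). Here the role played by the translation average in the one-dimensional proof is taken
over by the heat semigroup, whose smoothing in all directions simultaneously is what makes the argument work
for the full divergence. Given $\Phi\in C_0^\infty(\mathbb{R}^n,\mathbb{R}^n)$ and $t>0$, I would split
$\int\mathrm{div}\Phi\,f\,dx=\int\mathrm{div}\Phi\,(f-P_tf)\,dx+\int\mathrm{div}\Phi\,P_tf\,dx$. The first
term is bounded by $\|\mathrm{div}\Phi\|_q\,\|f-P_tf\|_p\le c_{\alpha,n}\|f\|_{p,\alpha}\|\mathrm{div}\Phi\|_q\,t^{\alpha/2}$
using Lemma~\ref{lem1.1}. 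For the second term, integrate by parts to move $\mathrm{div}$ onto $P_tf$ (equivalently
onto the Gaussian kernel) and estimate the gradient of the heat semigroup: one has the standard bound
$\|\nabla P_t g\|_p\le (2\pi)^{-n/2}t^{-1/2}\Bigl(\int|z|e^{-|z|^2/2}\,dz\Bigr)\|g\|_p$ obtained by
differentiating $P_tg(x)=\int g(x+\sqrt t z)(2\pi)^{-n/2}e^{-|z|^2/2}\,dz$ in $x$ and using that the
$x$-gradient equals $t^{-1/2}$ times the $z$-gradient after an integration by parts in $z$, so that
$|\nabla P_t f(x)|\le t^{-1/2}(2\pi)^{-n/2}\int|z|e^{-|z|^2/2}\,dz$ applied to the $L^p$-duality; actually
it is cleaner to write the second term as $-\int\langle\Phi,\nabla P_tf\rangle\,dx$ and bound it by
$\|\Phi\|_q\|\nabla P_tf\|_p\le (2\pi)^{-n/2}\Bigl(\int|z|^{1+\alpha}e^{-|z|^2/2}\,dz\Bigr)\|f\|_{p,\alpha}\,
t^{(\alpha-1)/2}\|\Phi\|_q$, the extra power $|z|^\alpha$ and the exponent $t^{(\alpha-1)/2}$ arising because
the difference quotient of $f$ inside the kernel derivative is controlled by $\|f\|_{p,\alpha}(\sqrt t|z|)^\alpha$.
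Thus
\[
\int_{\mathbb{R}^n}\mathrm{div}\Phi\,f\,dx\le \|f\|_{p,\alpha}\bigl(c_{\alpha,n}\,t^{\alpha/2}\|\mathrm{div}\Phi\|_q
+ c'_{\alpha,n}\,t^{(\alpha-1)/2}\|\Phi\|_q\bigr),
\]
with $c_{\alpha,n}=(2\pi)^{-n/2}\int|z|^\alpha e^{-|z|^2/2}\,dz$ and
$c'_{\alpha,n}=(2\pi)^{-n/2}\int|z|^{1+\alpha}e^{-|z|^2/2}\,dz$. Choosing $t^{1/2}=\|\Phi\|_q/\|\mathrm{div}\Phi\|_q$
(assuming $\mathrm{div}\Phi\not\equiv0$; otherwise there is nothing to prove) balances the two terms and gives
$\int\mathrm{div}\Phi\,f\,dx\le (c_{\alpha,n}+c'_{\alpha,n})\|f\|_{p,\alpha}\|\Phi\|_q^\alpha\|\mathrm{div}\Phi\|_q^{1-\alpha}$,
so $V^{p,\alpha}(f)\le C(n,\alpha)\|f\|_{p,\alpha}$ with $C(n,\alpha)=c_{\alpha,n}+c'_{\alpha,n}$. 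The final
numerical bound $C(n,\alpha)\le\sqrt n+n$ follows from $\int|z|^\alpha e^{-|z|^2/2}\,dz\le
\bigl(\int|z|^2 e^{-|z|^2/2}\,dz\bigr)^{\alpha/2}\bigl(\int e^{-|z|^2/2}\,dz\bigr)^{1-\alpha/2}$ by H\"older,
giving $c_{\alpha,n}\le n^{\alpha/2}\le\sqrt n$ since $(2\pi)^{-n/2}\int|z|^2e^{-|z|^2/2}\,dz=n$, and similarly
$c'_{\alpha,n}\le n^{(1+\alpha)/2}\le n$. The main obstacle is the careful justification of the gradient estimate
for $P_tf$ when $f$ is merely in $B^\alpha_p$ rather than smooth: one must legitimately transfer the derivative
from $f$ to the kernel and then recognize the resulting kernel integral as a difference operator applied to $f$,
for which the Nikolskii--Besov seminorm supplies the $t^{\alpha/2}$ gain; doing this rigorously (e.g. by first
differentiating under the integral in the $x$-variable, then integrating by parts in the Gaussian variable, and
finally applying $L^p$-duality together with the definition of $\|f\|_{p,\alpha}$) is where the real work lies.
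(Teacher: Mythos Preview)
Your proposal is correct and follows essentially the same route as the paper: the trivial inequality $V_0^{p,\alpha}\le V^{p,\alpha}$ via $\Phi=\varphi e$, the one-dimensional shift argument for $\|f\|_{p,\alpha}\le 2^{1-\alpha}V_0^{p,\alpha}(f)$, and for $(\mathrm{i})\Rightarrow(\mathrm{ii})$ the heat-semigroup splitting with Lemma~\ref{lem1.1} on the $f-P_tf$ term and the kernel-gradient/difference trick on the $P_tf$ term, optimized at $t=\|\Phi\|_q^2/\|\mathrm{div}\Phi\|_q^{-2}$. The only cosmetic difference is that the paper keeps $\Phi$ inside the duality pairing and uses the odd symmetry in $z$ to insert $f(x-\sqrt{t}z)-f(x)$ there, whereas you (in effect) bound $\|\nabla P_t f\|_p$ directly via the same symmetry and Minkowski; both yield exactly the constant $c'_{\alpha,n}=(2\pi)^{-n/2}\int|z|^{1+\alpha}e^{-|z|^2/2}\,dz$.
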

\begin{proof}
As we have already noted,
the equivalence $\rm(i)\Leftrightarrow\rm(iii)$ can be proved similarly to the one-dimensional case.
We provide details for completeness.

$\rm(ii)\Rightarrow(iii)$.
For every function $\varphi\in C_0^\infty(\mathbb{R}^n)$ and for every unit vector $e\in\mathbb{R}^n$
we  take the vector field $\Phi=e\varphi$ and conclude that
$$
V_0^{p, \alpha}(f) \le V^{p, \alpha}(f).
$$

$\rm(iii)\Rightarrow(i)$.
Let $e = |h|^{-1}h$.
It is easy to see that
\begin{align*}
\|f_h-f\|_p &=\sup_{\substack{\varphi\in C_0^\infty\\ \|\varphi\|_q \le1}}
\int_{\mathbb{R}^n} \varphi(x)(f_h(x)-f(x))\, dx
=\sup_{\substack{\varphi\in C_0^\infty\\ \|\varphi\|_q \le1}}
\int_{\mathbb{R}^n} [\varphi(x+h)-\varphi(x)] f(x)\, dx
\\
&=
\sup_{\substack{\varphi\in C_0^\infty\\ \|\varphi\|_q \le1}}
\int_{\mathbb{R}^n} \biggl(\int_0^{|h|}\partial_e\varphi(x+se)\, ds\biggr) f(x)\, dx.
\end{align*}
As above, we consider the following function of class $C_0^\infty(\mathbb{R}^n)$:
$$
\psi(x)=\int_0^{|h|}\varphi(x+se)\, ds  .
$$
Note that $\|\psi\|_q \le |h|\|\varphi\|_q$
and that
$$
|\partial_e\psi(x)|=\biggl|\int_0^{|h|}\partial_e\varphi(x+se)\,ds\biggr|
=|\varphi(x+h)-\varphi(x)|,
$$
which provides the bound $\|\partial_e\psi\|_q\le 2\|\varphi\|_q$.
By Definition \ref{D1.2} we have
$$
\int_{\mathbb{R}^n}\partial_e\psi(x)f(x)\, dx
\le V_0^{p, \alpha}(f)\|\psi\|_q^\alpha\|\partial_e\psi\|_q^{1-\alpha}
\le 2^{1-\alpha}V_0^{p, \alpha}(f)|h|^\alpha\|\varphi\|_q,
$$
which implies the estimate
$$
\|f_h-f\|_p\le2^{1-\alpha}V_0^{p, \alpha}(f)|h|^\alpha.
$$

$\rm(i)\Rightarrow(ii)$.
For every
vector field $\Phi\in C_0^\infty(\mathbb{R}^n, \mathbb{R}^n)$
we have
$$
\int_{\mathbb{R}^n} \mathrm{div}\Phi(x) f(x)\, dx =
\int_{\mathbb{R}^n} \mathrm{div}\Phi(x) (f(x) - P_tf(x))\, dx
 + \int_{\mathbb{R}^n} \mathrm{div}\Phi(x) P_tf(x)\, dx.
$$
For the first term on the right by Lemma \ref{lem1.1} we have
$$
\int_{\mathbb{R}^n} \mathrm{div}\Phi(x) (f(x) - P_tf(x))\, dx \le \|\mathrm{div}\Phi\|_q\|f - P_tf\|_p\le
c_{\alpha, n}\|f\|_{p,\alpha}\|\mathrm{div}\Phi\|_qt^{\alpha/2}.
$$
For the second term we have
\begin{align*}
\int_{\mathbb{R}^n} \mathrm{div}\Phi(x) P_tf(x)\, dx &=
- \int_{\mathbb{R}^n}  \langle \Phi(x), \nabla P_tf(x)\rangle\, dx
\\
&= t^{-1/2}\int_{\mathbb{R}^n}  \int_{\mathbb{R}^n}
\langle \Phi(x), (x-y)t^{-1/2}\rangle f(y)(2\pi t)^{-n/2}e^{-\frac{|x-y|^2}{2t}}\, dy\, dx
\\
&= t^{-1/2}\int_{\mathbb{R}^n}  \int_{\mathbb{R}^n}
\langle \Phi(x), z\rangle f(x - \sqrt{t}z)(2\pi)^{-n/2}e^{-\frac{|z|^2}{2}}\, dz\, dx
\\
&=
t^{-1/2}\int_{\mathbb{R}^n}  (2\pi)^{-n/2}e^{-\frac{|z|^2}{2}} \int_{\mathbb{R}^n}
\langle \Phi(x), z\rangle (f(x - \sqrt{t} z) - f(x))\, dx\, dz
\\
&\le
t^{-1/2}t^{\alpha/2}\|f\|_{p,\alpha}\int_{\mathbb{R}^n}  (2\pi)^{-n/2}e^{-\frac{|z|^2}{2}}
\biggl(\int_{\mathbb{R}^n}  \langle \Phi(x), z\rangle^q dx\biggr)^{1/q}|z|^\alpha \, dz
\\
&\le t^{(-1+\alpha)/2}\|f\|_{p,\alpha}\|\Phi\|_q(2\pi)^{-n/2}
\int_{\mathbb{R}^n} |z|^{1+\alpha}e^{-\frac{|z|^2}{2}}\, dz.
\end{align*}
In the fourth equality above we have used the fact that the
double integral of $f(x)\langle \Phi(x), z\rangle e^{-\frac{|z|^2}{2}}$ in $z$ and $x$ vanishes,
since the integral in $z$
vanishes for every fixed~$x$.
Thus, we have
$$
\int_{\mathbb{R}^n} \mathrm{div}\Phi(x) f(x)\, dx\le
t^{\alpha/2}\|\mathrm{div}\Phi\|_q\|f\|_{p,\alpha}c_{\alpha, n} +
t^{(-1+\alpha)/2}\|\Phi\|_q\|f\|_{p,\alpha}(2\pi)^{-n/2}\int|z|^{1+\alpha}e^{-\frac{|z|^2}{2}}\, dz.
$$
Taking $t = \|\Phi\|^2_q\|\mathrm{div}\Phi\|_q^{-2}$ we find that
$$
\int_{\mathbb{R}^n} \mathrm{div}\Phi(x) f(x) \, dx
\le C(n, \alpha)\|f\|_{p,\alpha}\|\Phi\|^\alpha_q\|\mathrm{div}\Phi\|^{1-\alpha}_q
$$
with $C(n,\alpha)$ given by the announced expression.
 \end{proof}

 \begin{remark}
{\rm
Let $\alpha=1$. If $p>1$, then $B^1_p(\mathbb{R}^n)$ coincides with
 the Sobolev class $W^{p,1}(\mathbb{R}^n)$ consisting of functions
belonging to $L^p(\mathbb{R}^n)$ along with their generalized first order partial derivatives.
But if $p=1$, then $B^1_p(\mathbb{R}^n)$ coincides with
the class $BV(\mathbb{R}^n)$ of functions of bounded variation consisting of integrable
functions whose generalized first order partial derivatives are bounded measures.
}\end{remark}

Let us proceed to a characterization of the Nikolskii--Besov classes in terms of the heat semigroup.
The proof employs the previous theorem.

\begin{definition}\label{D1.3}
For a function $f\in L^p(\mathbb{R}^n)$ set
$$
U^{p, \alpha}(f) := \sup_{t>0} t^{\frac{1-\alpha}{2}}\|\nabla P_tf\|_p,
$$
where $\{P_t\}_{t\ge 0}$ is the heat semigroup.
\end{definition}

We need the following analog of Lemma \ref{lem1.1}.

\begin{lemma}\label{lem1.2}
Let $f\in L^p(\mathbb{R}^n)$ be such that  $U^{p, \alpha}(f)<\infty$.
Then
$$
\|f-P_tf\|_p\le 4\alpha^{-1}\sqrt{n}U^{p,\alpha}(f)t^{\alpha/2} \quad \forall\, t\ge0.
$$
\end{lemma}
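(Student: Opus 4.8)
The plan is to integrate the identity $f - P_t f = -\int_0^t \partial_s P_s f\, ds$ and use the smoothing bound $\partial_s P_s = \tfrac12 \Delta P_s$ together with the definition of $U^{p,\alpha}$. More concretely, the semigroup property gives $P_s f = P_{s/2}(P_{s/2} f)$, hence $\nabla P_s f = P_{s/2}(\nabla P_{s/2} f)$, and since $P_{s/2}$ is a contraction on $L^p$ we get $\|\nabla P_s f\|_p \le \|\nabla P_{s/2} f\|_p$. This is not quite what is needed; instead I would bound $\|\partial_s P_s f\|_p$ directly. Writing $\partial_s P_s f = \tfrac12 \Delta P_s f = \tfrac12 \operatorname{div}(\nabla P_s f)$ and applying $\nabla$ to the representation $P_s = P_{s/2}\circ P_{s/2}$, one obtains $\Delta P_s f = \operatorname{div}\big(P_{s/2}(\nabla P_{s/2} f)\big)$; each component of $\nabla P_{s/2}(\nabla P_{s/2}f)$ is a second derivative of the heat kernel applied to $P_{s/2}f$... so the cleaner route is: $\partial_s P_s f = \tfrac12\sum_i \partial_i \big(P_{s/2}\,\partial_i P_{s/2} f\big)$, and $\partial_i P_{s/2} g = \nabla_i$ acting through the kernel, which costs a factor $(s/2)^{-1/2}$ in $L^p\to L^p$ norm. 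Thus $\|\partial_s P_s f\|_p \le \tfrac12 \sum_i (s/2)^{-1/2} \|\partial_i P_{s/2} f\|_p \le \tfrac{1}{2}(s/2)^{-1/2}\sqrt{n}\,\|\nabla P_{s/2} f\|_p$.

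Next I would insert the hypothesis $\|\nabla P_{s/2} f\|_p \le U^{p,\alpha}(f)\,(s/2)^{-(1-\alpha)/2}$, obtaining
$$
\|\partial_s P_s f\|_p \le \tfrac{1}{2}\sqrt{n}\,U^{p,\alpha}(f)\,(s/2)^{-1/2}(s/2)^{-(1-\alpha)/2}
= 2^{1-\alpha/2}\cdot\tfrac12\sqrt{n}\,U^{p,\alpha}(f)\, s^{-1+\alpha/2}.
$$
Then
$$
\|f - P_t f\|_p \le \int_0^t \|\partial_s P_s f\|_p\, ds
\le C\sqrt{n}\,U^{p,\alpha}(f)\int_0^t s^{-1+\alpha/2}\, ds
= \tfrac{2C}{\alpha}\sqrt{n}\,U^{p,\alpha}(f)\, t^{\alpha/2},
$$
with $C$ an absolute constant that I would track to land below the stated $4/\alpha$. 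Note the integral $\int_0^t s^{\alpha/2 - 1}\,ds$ converges at $0$ precisely because $\alpha > 0$, which is where the factor $\alpha^{-1}$ in the conclusion comes from; and the identity $f = \lim_{s\to 0^+} P_s f$ in $L^p$ (valid for $p\in[1,\infty)$) justifies taking the lower endpoint $s = 0$.

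The main technical point to get right is the $L^p$-norm estimate $\|\partial_i P_\tau g\|_p \le c\,\tau^{-1/2}\|g\|_p$ for the heat semigroup. This follows because $\partial_i P_\tau g = k_\tau^{(i)} * g$ where $k_\tau^{(i)}(x) = \partial_i\big((2\pi\tau)^{-n/2} e^{-|x|^2/(2\tau)}\big) = -x_i\tau^{-1}(2\pi\tau)^{-n/2}e^{-|x|^2/(2\tau)}$, and a direct computation of $\|k_\tau^{(i)}\|_{L^1(\mathbb{R}^n)}$ gives a bound of the form $c_i\,\tau^{-1/2}$ by the substitution $x = \sqrt\tau z$; then Young's inequality gives $\|\partial_i P_\tau g\|_p \le \|k_\tau^{(i)}\|_1\|g\|_p$. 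Summing over $i$ and using $\sum_i |z_i|\le \sqrt n\,|z|$ (or better, controlling $\sum_i\|k_\tau^{(i)}\|_1$) yields the $\sqrt n$ factor. I would carry these constants carefully enough to verify they combine to something at most $4\alpha^{-1}\sqrt n$; the slack in the stated bound suggests this is not tight, so a somewhat crude estimate of $\|k_\tau^{(i)}\|_1$ will suffice.
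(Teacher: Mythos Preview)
Your overall strategy---write $f-P_tf=-\int_0^t \partial_s P_s f\,ds$, split $P_s=P_{s/2}P_{s/2}$, use the heat-kernel gradient bound for one factor and the hypothesis $\|\nabla P_{s/2}f\|_p\le U^{p,\alpha}(f)(s/2)^{-(1-\alpha)/2}$ for the other, then integrate $s^{\alpha/2-1}$---is exactly the paper's approach (the paper carries it out by duality, pairing with a test function $\varphi$ and arriving at $-\int_0^t\int\langle\nabla P_{s/2}\varphi,\nabla P_{s/2}f\rangle\,dx\,ds$). So the plan is right.

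The gap is the step
\[
\sum_{i=1}^n\|\partial_i P_{s/2}f\|_p\le \sqrt{n}\,\|\nabla P_{s/2}f\|_p .
\]
This inequality is \emph{not} valid for $p>2$: for a vector field $v=(v_1,\dots,v_n)$ the bound $\sum_i\|v_i\|_p\le\sqrt{n}\,\|\,|v|\,\|_p$ fails once the components have essentially disjoint supports, and one can realize such behavior with gradients (e.g.\ take $g(x_1,x_2)=\phi(x_1)\chi_L(x_2)+\chi_L(x_1)\phi(x_2)$ with a wide plateau $\chi_L$, then $\partial_1 g$ and $\partial_2 g$ live on nearly disjoint regions and the ratio $\bigl(\sum_i\|\partial_i g\|_p\bigr)/\|\nabla g\|_p$ tends to $2\cdot 2^{-1/p}>\sqrt{2}$). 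With your componentwise splitting you can only get $\sum_i\|\partial_i g\|_p\le n\|\nabla g\|_p$, which yields $4\alpha^{-1}n$ rather than $4\alpha^{-1}\sqrt{n}$. The fix is to avoid breaking into components and instead estimate the divergence as a single object: writing $F=\nabla P_{s/2}f$ and using the kernel representation
\[
\mathrm{div}\bigl(P_{\tau}F\bigr)(x)=-\tau^{-1/2}(2\pi)^{-n/2}\int_{\mathbb{R}^n}\langle F(x-\sqrt{\tau}z),z\rangle\,e^{-|z|^2/2}\,dz,
\]
Minkowski's inequality gives $\|\mathrm{div}(P_\tau F)\|_p\le \tau^{-1/2}\,\mathbb{E}|Z|\,\|F\|_p\le \sqrt{n}\,\tau^{-1/2}\|F\|_p$ (since $\mathbb{E}|Z|\le (\mathbb{E}|Z|^2)^{1/2}=\sqrt{n}$). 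This is precisely what the paper's duality argument encodes: the pairing $\int\langle\nabla P_{s/2}\varphi,\nabla P_{s/2}f\rangle$ keeps the inner product intact and delivers $\|\nabla P_{s/2}\varphi\|_q\le\sqrt{n}(s/2)^{-1/2}\|\varphi\|_q$ by the same kernel computation. With that correction your constants line up and the computation closes to give $4\alpha^{-1}\sqrt{n}$ (in fact a bit better, since $\partial_s P_s=\tfrac12\Delta P_s$ contributes an extra $1/2$).
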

\begin{proof}
For any function $\varphi\in C_0^\infty(\mathbb{R}^n)$ we have
\begin{multline*}
\int_{\mathbb{R}^n} \varphi(x)(P_tf(x) -f(x))\, dx
= \int_{\mathbb{R}^n} (P_t\varphi(x) -\varphi(x))f(x)\, dx
= \int_{\mathbb{R}^n} \int_0^t \frac{\partial}{\partial s}P_s\varphi(x) \, ds f(x)\, dx
\\
= \int_0^t \int_{\mathbb{R}^n} {\rm div}\nabla P_s\varphi(x) f(x)\, dx\, ds
= \int_0^t \int_{\mathbb{R}^n} P_{s/2}{\rm div}\nabla P_{s/2}\varphi(x) f(x)\, dx\, ds
\\
= \int_0^t \int_{\mathbb{R}^n} {\rm div}\nabla P_{s/2}\varphi(x) P_{s/2}f(x)\, dx\, ds
= -\int_0^t \int_{\mathbb{R}^n} \langle\nabla P_{s/2}\varphi(x), \nabla P_{s/2}f(x)\rangle\, dx\, ds.
\end{multline*}
The gradient $\nabla P_{s/2}\varphi$ admits the representation
\begin{align*}
\nabla P_{s/2}\varphi(x) &= -(\pi s)^{-n/2}
\int_{\mathbb{R}^n} \varphi(y) (x - y)2s^{-1}\exp\biggl(-\frac{|x-y|^2}{s}\biggr)\, dy
\\
&=
(s/2)^{-1/2}(2\pi)^{-n/2}\int_{\mathbb{R}^n} \varphi(x+\sqrt{s/2}z) ze^{-\frac{|z|^2}{2}}\, dz.
\end{align*}
Therefore,
$$
\|\nabla P_{s/2}\varphi\|_q\le(s/2)^{-1/2}\|\varphi\|_q(2\pi)^{-n/2}
\int_{\mathbb{R}^n} |z|e^{-\frac{|z|^2}{2}}\,dz
\le\sqrt{n}(s/2)^{-1/2}\|\varphi\|_q.
$$
Thus, we have
\begin{align*}
\int_{\mathbb{R}^n} \varphi(x)(f(x) - P_tf(x))\, dx  &\le
\int_0^t \|\nabla P_{s/2}\varphi\|_q \|\nabla P_{s/2}f\|_p\, ds
\\
&\le
\sqrt{n}\|\varphi\|_q U^{p, \alpha}(f) \int_0^t(s/2)^{\frac{-1+\alpha}{2}}(s/2)^{-1/2}\, ds
\\
&=
\sqrt{n}\|\varphi\|_q U^{p, \alpha}(f) \int_0^t(s/2)^{\frac{\alpha}{2}-1}\, ds
\le
4\alpha^{-1}\sqrt{n}\|\varphi\|_q U^{p, \alpha}(f) t^{\alpha/2}.
\end{align*}
Now taking the supremum over functions $\varphi$ with $\|\varphi\|_q\le 1$ we obtain the announced estimate.
\end{proof}

We now prove the following result, which is a characterization of the Nikolskii--Besov classes
in terms of the behavior of the heat semigroup near zero.
This result is close to the known characterization from \cite{Taib} and~\cite{Triebel}
that employs certain norms of $\partial_t T_t f$ rather than the gradient of~$T_tf$.

\begin{theorem}\label{T1.2}
Let $f\in L^p(\mathbb{R}^n)$. Then
$V^{p,\alpha}(f)<\infty$ if and only if
$U^{p, \alpha}(f)<\infty$.

Moreover,
$$
U^{p, \alpha}(f) \le n^{\frac{1-\alpha}{2}}V^{p, \alpha}(f),
\quad
V^{p, \alpha}(f)\le (4\sqrt{n}\alpha^{-1} + 1) U^{p, \alpha}(f).
$$
\end{theorem}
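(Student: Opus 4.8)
The plan is to establish the two displayed quantitative inequalities separately; the equivalence $V^{p,\alpha}(f)<\infty\iff U^{p,\alpha}(f)<\infty$ is then immediate, and the whole argument rests on ingredients already prepared above: the approximation estimate of Lemma~\ref{lem1.2} and the Gaussian representation of the gradient of the heat semigroup used in its proof, together with duality and optimization in $t$ as in Theorem~\ref{T1.1}.

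To prove $U^{p,\alpha}(f)\le n^{(1-\alpha)/2}V^{p,\alpha}(f)$ I would fix $t>0$ and, using that $\nabla P_tf\in L^p(\mathbb{R}^n,\mathbb{R}^n)$ for $f\in L^p$, write by duality
$$
\|\nabla P_tf\|_p=\sup\Bigl\{\int_{\mathbb{R}^n}\langle\Phi(x),\nabla P_tf(x)\rangle\,dx:\ \Phi\in C_0^\infty(\mathbb{R}^n,\mathbb{R}^n),\ \|\Phi\|_q\le1\Bigr\}.
$$
For such $\Phi$, integration by parts and the facts that $P_t$ is self-adjoint and commutes with derivatives give $\int\langle\Phi,\nabla P_tf\rangle\,dx=-\int\mathrm{div}(P_t\Phi)\,f\,dx$, where $P_t$ is applied to $\Phi$ componentwise; since $P_t\Phi\in C^\infty$ with all relevant derivatives in $L^q$, it is an admissible test field by the remark following Definition~\ref{D1.1}, and therefore this quantity is at most $V^{p,\alpha}(f)\|P_t\Phi\|_q^\alpha\|\mathrm{div}(P_t\Phi)\|_q^{1-\alpha}$. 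Now $\|P_t\Phi\|_q\le\|\Phi\|_q$ by contractivity, and from $\mathrm{div}(P_t\Phi)(x)=t^{-1/2}(2\pi)^{-n/2}\int\langle z,\Phi(x+\sqrt t z)\rangle e^{-|z|^2/2}\,dz$ one obtains, exactly as in the proof of Lemma~\ref{lem1.2}, the bound $\|\mathrm{div}(P_t\Phi)\|_q\le\sqrt n\,t^{-1/2}\|\Phi\|_q$. Combining these and using $\|\Phi\|_q\le1$ yields $\|\nabla P_tf\|_p\le n^{(1-\alpha)/2}t^{-(1-\alpha)/2}V^{p,\alpha}(f)$; multiplying by $t^{(1-\alpha)/2}$ and taking the supremum over $t>0$ finishes this direction.

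To prove $V^{p,\alpha}(f)\le(4\sqrt n\,\alpha^{-1}+1)U^{p,\alpha}(f)$ I would follow the pattern of the implication (i)$\Rightarrow$(ii) in Theorem~\ref{T1.1}: for $\Phi\in C_0^\infty(\mathbb{R}^n,\mathbb{R}^n)$ and $t>0$ split
$$
\int_{\mathbb{R}^n}\mathrm{div}\Phi(x)f(x)\,dx=\int_{\mathbb{R}^n}\mathrm{div}\Phi(x)\,(f(x)-P_tf(x))\,dx+\int_{\mathbb{R}^n}\mathrm{div}\Phi(x)\,P_tf(x)\,dx.
$$
The first term is bounded by $\|\mathrm{div}\Phi\|_q\,\|f-P_tf\|_p\le 4\alpha^{-1}\sqrt n\,U^{p,\alpha}(f)\,\|\mathrm{div}\Phi\|_q\,t^{\alpha/2}$ via Lemma~\ref{lem1.2}; the second, after integrating by parts, is at most $\|\Phi\|_q\,\|\nabla P_tf\|_p\le U^{p,\alpha}(f)\,\|\Phi\|_q\,t^{-(1-\alpha)/2}$ by the definition of $U^{p,\alpha}$. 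Choosing $t=\|\Phi\|_q^2\|\mathrm{div}\Phi\|_q^{-2}$ (the case $\mathrm{div}\Phi\equiv0$ being trivial, as then both sides of the target inequality vanish) makes each term equal to a constant times $\|\Phi\|_q^\alpha\|\mathrm{div}\Phi\|_q^{1-\alpha}$, with total constant $4\sqrt n\,\alpha^{-1}+1$, which is exactly the claim.

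I do not expect a deep obstacle: the argument is a near-mechanical combination of Lemma~\ref{lem1.2}, the semigroup identities, Hölder's inequality, and optimization in $t$. The points that need a word of care are the justification of the duality formula for $\|\nabla P_tf\|_p$ and of the integrations by parts — for which one uses that $P_tf$ is smooth with $\nabla P_tf\in L^p$ and that $C_0^\infty$ vector fields are dense in $L^q$ — together with the observation that $P_t\Phi$ falls under the extended class of admissible test fields noted after Definition~\ref{D1.1}.
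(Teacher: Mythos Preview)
Your proof is correct and follows essentially the same route as the paper: the same duality/integration-by-parts reduction to $\|P_t\Phi\|_q$ and $\|\mathrm{div}(P_t\Phi)\|_q$ with the Gaussian kernel bound for the first inequality, and the same $f=(f-P_tf)+P_tf$ splitting via Lemma~\ref{lem1.2} followed by the choice $t=\|\Phi\|_q^2\|\mathrm{div}\Phi\|_q^{-2}$ for the second. The only cosmetic difference is that the paper carries out the Minkowski-type estimate for $\|\mathrm{div}(P_t\Phi)\|_q$ explicitly rather than pointing back to Lemma~\ref{lem1.2}.
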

\begin{proof}
Assume that $V^{p,\alpha}(f)<\infty$.
For every
vector field $\Phi=(\Phi_i)\in C_0^\infty(\mathbb{R}^n, \mathbb{R}^n)$
with $\|\Phi\|_q\le 1$ one can easily verify that
$$
\int_{\mathbb{R}^n} \langle \Phi(x), \nabla P_tf(x)\rangle\, dx =
-\int_{\mathbb{R}^n} {\rm div}\bigl(P_t\Phi\bigr)(x)f(x)\, dx.
$$
The right-hand side is dominated by
$$
V^{p, \alpha}(f)\|P_t\Phi\|^\alpha_q\|\mathrm{div}P_t\Phi\|^{1-\alpha}_q,
$$
since $P_t\Phi_i\in C^\infty(\mathbb{R}^n)$, $P_t\Phi_i\in L^q(\mathbb{R}^n)$ and
$\partial_{x_j} P_t\Phi_i\in L^q(\mathbb{R}^n)$ for all~$i,j$.
Let us recall that $\|P_t\Phi\|_q\le \|\Phi\|_q\le 1$. Let us estimate the norm of the divergence.
We have
\begin{align*}
\frac{\partial}{\partial x_j}P_t\Phi_j(x) &=
-(2\pi t)^{-n/2}\int_{\mathbb{R}^n} \Phi_j(y) (x_j - y_j)t^{-1}\exp\biggl(-\frac{|x-y|^2}{2t}\biggr)\, dy
\\
&=
t^{-1/2}(2\pi)^{-n/2}\int_{\mathbb{R}^n} \Phi_j(x+\sqrt{t}z) z_je^{-\frac{|z|^2}{2}}\, dz.
\end{align*}
Thus, by  Minkowski's inequality for integrals (see, e.g., \cite[V.~1, p.~231]{mera})
\begin{align*}
\|\mathrm{div}P_t\Phi\|_q
&= t^{-1/2}\biggl(\int_{\mathbb{R}^n}
\biggl|(2\pi)^{-n/2}\int_{\mathbb{R}^n}\langle\Phi(x+\sqrt{t}z), z\rangle
e^{-\frac{|z|^2}{2}}\, dz\biggl|^q \, dx\biggr)^{1/q}
\\
&\le
t^{-1/2}\biggl(\int_{\mathbb{R}^n}\biggl(
(2\pi)^{-n/2}\int_{\mathbb{R}^n}|\Phi(x+\sqrt{t}z)|\,|z|
e^{-\frac{|z|^2}{2}}\, dz\biggr)^q\, dx\biggr)^{1/q}
\\
&\le
t^{-1/2}(2\pi)^{-n/2}\int_{\mathbb{R}^n}
\biggl(\int_{\mathbb{R}^n}|\Phi(x+\sqrt{t}z)|^q \, dx\biggr)^{1/q} |z|e^{-\frac{|z|^2}{2}}\, dz
\\
&=
t^{-1/2}\|\Phi\|_q (2\pi)^{-n/2}\int_{\mathbb{R}^n}|z| e^{-\frac{|z|^2}{2}}\, dz
\le \sqrt{n}t^{-1/2}\|\Phi\|_q.
\end{align*}
Summing up these estimates we obtain the bound
$$
\int_{\mathbb{R}^n} \langle \Phi(x), \nabla P_tf(x)\rangle \, dx\le
t^{-\frac{1-\alpha}{2}}n^{\frac{1-\alpha}{2}}V^{p, \alpha}(f).
$$
Taking the supremum over vector fields $\Phi$ we obtain the desired estimate.

Now we assume that $U^{p, \alpha}(f)<\infty$.
For every
vector field $\Phi\in C_0^\infty(\mathbb{R}^n, \mathbb{R}^n)$
we have
$$
\int_{\mathbb{R}^n} {\rm div}\Phi(x) f(x)\, dx
= \int_{\mathbb{R}^n} {\rm div}\Phi(x) (f(x) - P_tf(x))\, dx
+ \int_{\mathbb{R}^n} {\rm div}\Phi(x) P_tf(x)\, dx.
$$
Using Lemma \ref{lem1.2} we estimate the first term on the right as follows:
$$
\int_{\mathbb{R}^n} {\rm div}\Phi(x) (f(x) - P_tf(x))\, dx\le\|{\rm div}\Phi\|_q\|f - P_tf\|_p\le
4\sqrt{n}\alpha^{-1}U^{p,\alpha}(f)t^{\alpha/2}\|{\rm div}\Phi\|_q.
$$
For the second term we have
$$
\int_{\mathbb{R}^n} {\rm div}\Phi(x) P_tf(x)\, dx =
- \int_{\mathbb{R}^n} \langle\Phi(x), \nabla P_tf(x)\rangle \, dx\le
\|\Phi\|_q\|\nabla P_tf\|_p\le U^{p,\alpha}(f)t^{(\alpha-1)/2}\|\Phi\|_q.
$$
Therefore,
$$
\int_{\mathbb{R}^n} {\rm div}\Phi(x) f(x)\, dx
 \le 4\sqrt{n}\alpha^{-1}U^{p,\alpha}(f)t^{\alpha/2}\|{\rm div}\Phi\|_q
+ U^{p,\alpha}(f)t^{(\alpha-1)/2}\|\Phi\|_q.
$$
Taking $t = \|\Phi\|_q^2\|{\rm div}\Phi\|_q^{-2}$
we obtain the announced bound.
\end{proof}

Below we consider yet another classical semigroup: the Ornstein--Uhlenbeck semigroup
in the Gaussian case.

\section{Directional Nikolskii--Besov smoothness}\label{sect4}

In this section we discuss the directional Nikolskii--Besov smoothness of functions, in particular,
the fractional smoothness with respect to a given  variable (but in the last section
we shall see that this property is naturally defined for measures that need not be absolutely continuous).
 Example~\ref{ex1} below shows that the directional Nikolskii--Besov smoothness
of a function on the plane with respect to the first variable does not imply inclusion
to the respective Nikolskii--Besov class
of the restrictions of this function to the straight lines parallel to the first coordinate line.
This is in contrast to the behavior
of  Sobolev or $BV$ functions: we recall that if a function $f$ belongs to the Sobolev
class $W^{p,1}(\mathbb{R}^n)$ or to the class $BV(\mathbb{R}^n)$, then, for almost
all fixed values $x_1,\ldots,x_{n-1}$, the function $x_n\mapsto f(x_1,\ldots,x_n)$ belongs to
$W^{p,1}(\mathbb{R})$ or $BV(\mathbb{R})$, respectively (see~\cite{DM}).

It is quite natural to define the
directional Nikolskii--Besov smoothness of a function on $\mathbb{R}^n$ in the following way.

\begin{definition}\label{D1.4}
Let $\alpha \in (0,1]$, $p\in [1,+\infty)$, $e\in\mathbb{R}^n$, and $f\in L^p(\mathbb{R}^n)$. Set
$$
\|f\|_{p, \alpha; e} = \sup_{t\in\mathbb{R}}|t|^{-\alpha}\|f_{te} - f\|_p.
$$
We say that the function $f$ is Nikolskii--Besov $\alpha$-smooth along $e$
if the quantity $\|f\|_{p, \alpha; e}$ is finite.
\end{definition}

It is straightforward to introduce a directional analog of Definition \ref{D1.2}.

\begin{definition}\label{D1.5}
Let $f\in L^p(\mathbb{R}^n)$. Let $V^{p, \alpha}(f; e)$ be the infimum of numbers $C$ such that
$$
\int_{\mathbb{R}^n} \partial_e\varphi(x) f(x) dx\le C\|\varphi\|_q^\alpha\|\partial_e\varphi\|_q^{1-\alpha}
\quad
\forall\, \varphi\in C_0^\infty(\mathbb{R}^n).
$$
\end{definition}

For the directional Nikolskii--Besov smoothness we have the following analog of Theorem \ref{T0.1}
proved by the same reasoning.

\begin{theorem}\label{T1,5.1}
Let $f\in L^p(\mathbb{R}^n)$. Then
$\|f\|_{p,\alpha; e}<\infty$ if and only if
$V^{p,\alpha}(f; e)<\infty$.
\end{theorem}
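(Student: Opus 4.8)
The plan is to reproduce, almost verbatim, the two computations that prove Theorem~\ref{T0.1}, only now slicing $\mathbb{R}^n$ along the fixed vector $e$; the directional derivative $\partial_e$ plays the role of $\varphi'$ and all one-dimensional integrals are taken along the line through $x$ in the direction $e$. No new idea is needed, and in fact the argument will also yield the quantitative two-sided bound $2^{\alpha-1}\|f\|_{p,\alpha;e}\le V^{p,\alpha}(f;e)\le((1+\alpha)^{-1}+1)\|f\|_{p,\alpha;e}$.

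For the implication $V^{p,\alpha}(f;e)<\infty\Rightarrow\|f\|_{p,\alpha;e}<\infty$, I would fix $t>0$ and use the duality identity
$$
\|f_{te}-f\|_p=\sup_{\substack{\varphi\in C_0^\infty\\ \|\varphi\|_q\le 1}}\int_{\mathbb{R}^n}[\varphi(x+te)-\varphi(x)]f(x)\,dx=\sup_{\substack{\varphi\in C_0^\infty\\ \|\varphi\|_q\le 1}}\int_{\mathbb{R}^n}\Bigl(\int_0^t\partial_e\varphi(x+se)\,ds\Bigr)f(x)\,dx ,
$$
which is legitimate since $f_{te}-f\in L^p(\mathbb{R}^n)$. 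Setting $\psi(x)=\int_0^t\varphi(x+se)\,ds\in C_0^\infty(\mathbb{R}^n)$, one has $\|\psi\|_q\le t\|\varphi\|_q$ and $\partial_e\psi(x)=\varphi(x+te)-\varphi(x)$, so $\|\partial_e\psi\|_q\le 2\|\varphi\|_q$; feeding $\psi$ into Definition~\ref{D1.5} gives $\int_{\mathbb{R}^n}\partial_e\psi(x)f(x)\,dx\le 2^{1-\alpha}V^{p,\alpha}(f;e)t^\alpha\|\varphi\|_q$, and taking the supremum over $\varphi$ yields $\|f_{te}-f\|_p\le 2^{1-\alpha}V^{p,\alpha}(f;e)t^\alpha$. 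The case $t<0$ reduces to this one by replacing $e$ with $-e$ (the quantity $V^{p,\alpha}(f;\cdot)$ being even in its vector argument), so $\|f\|_{p,\alpha;e}\le 2^{1-\alpha}V^{p,\alpha}(f;e)$.

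For the converse, given $\varphi\in C_0^\infty(\mathbb{R}^n)$ and $t>0$ I would split, exactly as in Theorem~\ref{T0.1},
$$
\int_{\mathbb{R}^n}\partial_e\varphi(x)f(x)\,dx=t^{-1}\int_0^t\int_{\mathbb{R}^n}\partial_e\varphi(x)[f(x)-f(x-se)]\,dx\,ds+\int_{\mathbb{R}^n}t^{-1}[\varphi(x+te)-\varphi(x)]f(x)\,dx ,
$$
estimate the first summand by $(1+\alpha)^{-1}\|\partial_e\varphi\|_q\|f\|_{p,\alpha;e}t^\alpha$ using $\|f-f_{se}\|_p\le\|f\|_{p,\alpha;e}s^\alpha$ together with $\int_0^t s^\alpha\,ds=(1+\alpha)^{-1}t^{1+\alpha}$, and the second (after the change of variables turning it into $t^{-1}\int\varphi(x)[f_{te}(x)-f(x)]\,dx$) by $\|\varphi\|_q\|f\|_{p,\alpha;e}t^{\alpha-1}$. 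Choosing $t=\|\varphi\|_q\|\partial_e\varphi\|_q^{-1}$ then gives $\int_{\mathbb{R}^n}\partial_e\varphi(x)f(x)\,dx\le((1+\alpha)^{-1}+1)\|f\|_{p,\alpha;e}\|\varphi\|_q^\alpha\|\partial_e\varphi\|_q^{1-\alpha}$, hence $V^{p,\alpha}(f;e)\le((1+\alpha)^{-1}+1)\|f\|_{p,\alpha;e}<\infty$.

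There is essentially no genuine obstacle; the only points worth a word of care are the degenerate case $\partial_e\varphi\equiv 0$, where the choice of $t$ above is unavailable but the required inequality is trivial because its left-hand side vanishes, and the validity of the duality formula for $\|\cdot\|_p$ against test functions from $C_0^\infty$ when $p=1$ (so $q=\infty$), which follows from the density of $C_0^\infty(\mathbb{R}^n)$ in $L^1(\mathbb{R}^n)$ — exactly as was tacitly used in the proof of Theorem~\ref{T0.1}.
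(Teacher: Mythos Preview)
Your proposal is correct and follows exactly the route the paper intends: the paper itself offers no separate proof, merely stating that Theorem~\ref{T1,5.1} is ``proved by the same reasoning'' as Theorem~\ref{T0.1}, and your two computations are precisely that reasoning transplanted to the single direction~$e$ (indeed, the implication $V^{p,\alpha}(f;e)<\infty\Rightarrow\|f\|_{p,\alpha;e}<\infty$ is written out verbatim as the step $\rm(iii)\Rightarrow(i)$ in the proof of Theorem~\ref{T1.1}). The quantitative bounds you obtain also match those of Theorem~\ref{T0.1}.
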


The following example shows that there is a function $f$ on $\mathbb{R}^2$ that is
Nikolskii--Besov $\alpha$-smooth along the vector $e_1=(1,0)$, but the functions $x \mapsto f(x, y)$
are not Nikolskii--Besov $\alpha$-smooth for almost every~$y$.
While Theorem \ref{T1.1} shows some similarity between Nikolskii--Besov classes
and the class $BV$, this example exhibits a difference between them with respect
to restrictions. However, in the last section we shall see that the fractional smoothness is inherited
by restrictions with an arbitrarily small loss of the order of smoothness.

\begin{example}\label{ex1}
{\rm
Fix $\alpha\in(0,1)$.
Let $\{J_k\}$ be intervals in $[0,1]$ such that $|J_k| = k^{-1}(\ln k)^{-1}$
and each point in $[0,1]$ is covered by infinitely many of such intervals
(this can be done since $\sum_k |J_k| = \infty$).
Set $f_k(y) = k^{-\alpha}\sqrt{\ln k} I_{k}(y)$, where $I_k$ is the indicator function of~$J_k$.
We note that $\sum_{k=1}^\infty \|f_k\|^2_{L^2(\mathbb{R})} < \infty$.
Consider a function $f$ on $\mathbb{R}^2$ defined by
$$
f(x, y) = \sum_{k=1}^\infty I_{[0,2\pi]}(x)\sin (kx)f_k(y).
$$
The function $f$ is well-defined as an element of $L^2(\mathbb{R}^2)$,
since the series converges in $L^2(\mathbb{R}^2)$.
For every $k$, for almost every $y\in[0,1]$ by Fubini's theorem we have
$$
\int_{\mathbb{R}} \sin(kx) f(x, y)\, dx = \pi f_k(y).
$$
It follows that for almost every $y\in [0,1]$, the function $x\mapsto f(x, y)$ does not belong
to $B^\alpha_1(\mathbb{R})$. Indeed, if this function belongs to this space, then
by Theorem \ref{T0.1} we have the estimate
$$
f_k(y) = \pi^{-1}\int_{\mathbb{R}} \sin(kx) f(x, y)\, dx \le C k^{-\alpha},
$$
but in our case for infinitely many $k$ one has $f_k(y) = k^{-\alpha}\sqrt{\ln k}$, which leads to a contradiction.

We now observe that for any function $\varphi\in C^\infty_0(\mathbb{R}^2)$ one has
\begin{align*}
&\int_{\mathbb{R}}\int_{\mathbb{R}} \partial_x\varphi(x,y) f(x,y)\, dx\, dy=
\sum_{k=1}^\infty\int_0^1  f_k(y)\int_0^{2\pi} \partial_x\varphi(x,y) \sin (kx)\, dx\, dy
\\
&=
\int_0^1\int_0^{2\pi}  \varphi(x,y)\Bigl[-\sum_{k=1}^Nf_k(y) k\cos (kx)\Bigr]\, dx\, dy
+
\int_0^1\int_0^{2\pi} \partial_x\varphi(x,y)\Bigl[\sum_{k=N+1}^\infty f_k(y) \sin (kx)\Bigr]\, dx\, dy
\\
&\le
\sqrt{2\pi}\|\varphi\|_\infty\Bigl\|\sum_{k=1}^Nf_k(y) k\cos (kx)\Bigr\|_{L^2([0,1]\times[0,2\pi])}
+
\sqrt{2\pi}\|\partial_x\varphi\|_\infty\Bigl\|\sum_{k=N+1}^\infty f_k(y) \sin (kx)\Bigr\|_{L^2([0,1]\times[0,2\pi])}
\\
&=
\sqrt{2}\pi\|\varphi\|_\infty\Bigl(\sum_{k=1}^N k^{-2\alpha-1} k^2\Bigr)^{1/2}
+
\sqrt{2}\pi\|\partial_x\varphi\|_\infty\Bigl(\sum_{k=N+1}^\infty k^{-2\alpha-1} \Bigr)^{1/2}
\\
&\le
C(\|\varphi\|_\infty N^{1-\alpha} + \|\partial_x\varphi\|_\infty N^{-\alpha}).
\end{align*}
Taking $N=\bigl[\|\partial_x\varphi\|_\infty\|\varphi\|_\infty^{-1}\bigr]$ we obtain
$$
\int_{\mathbb{R}}\int_{\mathbb{R}} \partial_x\varphi(x,y) f(x,y)\, dx\, dy\le
C_1\|\varphi\|_\infty^\alpha\|\partial_x\varphi\|_\infty^{1-\alpha}.
$$
Thus, the function $f$ is
Nikolskii--Besov $\alpha$-smooth along $e_1$ by Theorem \ref{T1,5.1}.
}
\end{example}

\section{Nikolskii--Besov classes with respect to Gaussian measures on $\mathbb{R}^n$}\label{sect5}

In this section we introduce Gaussian Nikolskii--Besov classes
on finite-dimensional spaces. The principal difference with the previously defined classes is that
now we deal with Gaussian measures in place of Lebesgue measure.
The effect of this is some dimension-free estimates which enable us to extend the basic constructions
to the infinite-dimensional case (which is done is the next section).
For these classes  we obtain an equivalent description
in terms of the Ornstein--Uhlenbeck semigroup in case $p>1$, which generalizes
Theorem~\ref{T1.2}. We also find the rate of approximation of functions in Gaussian
Nikolskii--Besov classes by the Ornstein--Uhlenbeck semigroup.
This estimate yields a new Poincar\'e-type inequality.

Let us recall some known facts and notation.
Let $\gamma$ be the standard Gaussian measure on~$\mathbb{R}^n$,
i.e., the measure with density
$$
(2\pi)^{-n/2}\exp(-|x|^2/2)
$$
with respect to the standard Lebesgue measure on $\mathbb{R}^n$.
Let now $\{T_t\}_{t\ge0}$ be the Ornstein--Uhlenbeck semigroup defined by
$$
T_tf(x) := \int_{\mathbb{R}^n} f(e^{-t}x+\sqrt{1-e^{-2t}}y)\, \gamma(dy),\quad f\in L^1(\gamma).
$$
Note that for any function $\varphi\in C_0^\infty(\mathbb{R}^n)$ one has
$$
T_t\varphi(x) = (2\pi(1-e^{-2t}))^{-n/2}\int_{\mathbb{R}^n}\varphi(z)
\exp\Bigl(\frac{-|z-e^{-t}x|^2}{2(1-e^{-2t})}\Bigr)\, dz,
$$
which is readily verified by the change of variables.
We also note that
\begin{equation}\label{eq1}
\nabla T_t\varphi(x) = \frac{e^{-t}}{\sqrt{1-e^{-2t}}}\int \varphi(e^{-t}x+\sqrt{1-e^{-2t}}y) y \, \gamma(dy).
\end{equation}

Let $L$ be the Ornstein--Uhlenbeck operator defined by
$$
L\varphi(x) = \Delta\varphi(x) - \langle x, \nabla\varphi(x)\rangle,\quad \varphi\in C_0^\infty(\mathbb{R}^n).
$$
By the same expression $L$ is defined on functions that have local second order Sobolev derivatives.
It is known that
$$
\frac{d}{dt}T_t\varphi = LT_t\varphi,\quad T_0\varphi = \varphi.
$$
Set
\begin{equation}\label{cons}
c_t := \int_0^t\frac{e^{-\tau}}{\sqrt{1-e^{-2\tau}}}\ d\tau.
\end{equation}
We note that $c_t\le (2t)^{1/2}$ and $\lim\limits_{t\to\infty}c_t = \pi/2$.

Let $\mathrm{Lip}_1(\mathbb{R}^n)$ be the class of all $1$-Lipschitz functions on $\mathbb{R}^n$.
The Kantorovich norm associated with the  measure $\gamma$ is defined by
$$
\|f\|_{\rm K,\gamma} := \sup\biggl\{\int fg\, d\gamma,\ g\in \mathrm{Lip}_1(\mathbb{R}^n)\biggr\}
$$
on the subspace of functions $f\in L^1(\gamma)$ with zero integral
for which this quantity is finite. In particular, this norm is finite
on functions in $W^{1,1}(\gamma)$ with zero integral (see, e.g., \cite[Lemma~4.5]{B14}).

In this section we also use the following notation (which in the previous sections
was employed for Lebesgue measure):
$$
\|f\|_p := \biggl(\int_{\mathbb{R}^n} |f(x)|^p\, \gamma(dx)\biggr)^{1/p},\quad p\in[1,\infty).
$$
For a mapping $\Phi=(\Phi_i)\in C_0^\infty(\mathbb{R}^n, \mathbb{R}^n)$ set
$$
\mathrm{div}_\gamma\Phi = \sum_{i=1}^n (\partial_{x_i}\Phi_i - x_i\Phi_i).
$$

The following definition is inspired by Definition \ref{D1.1}.

\begin{definition}\label{D2.1} Let $\alpha\in (0, 1]$.
A function $f\in L^p(\gamma)$ belongs to the Gaussian Nikolskii--Besov
class $B^\alpha_p(\gamma)$ if there is a number $C$ such that for every
mapping $\Phi\in C_0^\infty(\mathbb{R}^n, \mathbb{R}^n)$ we have
$$
\int_{\mathbb{R}^n}
 f \mathrm{div}_\gamma\Phi \, d\gamma \le C\|\Phi\|^\alpha_q\|\mathrm{div}_\gamma\Phi\|^{1-\alpha}_q,
$$
where $1/p + 1/q = 1$.
Let $V_\gamma^{p, \alpha}(f)$ be the infimum of such numbers $C$.
\end{definition}

By approximation
 the above estimate extends from vector fields $\Phi\in~C_0^\infty(\mathbb{R}^n, \mathbb{R}^n)$
to vector fields $\Phi\in C^\infty(\mathbb{R}^n, \mathbb{R}^n)$
such that $\Phi_i \in L^q(\gamma)$, $\mathrm{div}_\gamma\Phi \in L^q(\gamma)$.

We also introduce an analog of the quantity from Definition \ref{D1.3}.

\begin{definition}\label{D2.2}
For a function $f\in L^p(\gamma)$ set
$$
U^{p, \alpha}_\gamma(f) := \sup_{t>0} t^{\frac{1-\alpha}{2}}\|\nabla T_tf\|_p,
$$
where $\{T_t\}_{t\ge0}$ is the Ornstein--Uhlenbeck semigroup.
\end{definition}

Let us recall (see, e.g., \cite[Proposition 5.4.8]{GM})
that for a function $f\in L^p(\gamma)$ with $p>1$ we have
$\nabla T_tf\in L^p(\gamma)$ for any  $t>0$,
hence $T_tf\in W^{p,1}(\gamma)$, where $W^{p,1}(\gamma)$ is the Sobolev class
with respect to~$\gamma$, defined similarly to the usual Sobolev classes by replacing Lebesgue measure
with~$\gamma$ (see \cite{GM}, \cite{DM}, \cite{B14}, and~\cite{Shig}).
Certainly, in case $p=1$, we set  $U^{1, \alpha}_\gamma(f)=\infty$ if
$T_tf\not\in W^{1,1}(\gamma)$ for some $t>0$.

\begin{lemma}\label{lem2.1}
For any function $\varphi\in C_0^\infty(\mathbb{R}^n)$ one has
$$
\|\nabla T_t\varphi\|_q\le C\bigl(q/(q-1)\bigr)\frac{e^{-t}}{\sqrt{1-e^{-2t}}}\|\varphi\|_q
\quad \forall\, q\in(1, \infty],
$$
and for any $\Phi\in C_0^\infty(\mathbb{R}^n, \mathbb{R}^n)$
one has
$$
\|\mathrm{div}_\gamma T_t\Phi\|_p\le C(p)(1-e^{-2t})^{-1/2}\|\Phi\|_p
\quad \forall\, p\in[1, \infty),
$$
 where
$$
C(p) := \biggl((2\pi)^{-1/2}\int_{\mathbb{R}}|s|^pe^{-\frac{s^2}{2}}\, ds\biggr)^{1/p}.
$$
\end{lemma}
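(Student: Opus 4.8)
The plan is to establish the two bounds in the order that makes the argument shortest: first the estimate for the Gaussian divergence of $T_t\Phi$, which carries the whole computation, and then to derive the estimate for $\nabla T_t\varphi$ from it by duality and Gaussian integration by parts. Throughout write $a=e^{-t}$, $b=\sqrt{1-e^{-2t}}$, so that $a^2+b^2=1$; the geometric point is that $(x,y)\mapsto(ax+by,\,ay-bx)$ is an orthogonal linear map of $\mathbb{R}^n\times\mathbb{R}^n$ and hence preserves the product Gaussian measure $\gamma\otimes\gamma$. For the divergence estimate, apply \eqref{eq1} componentwise (with $\Phi_i$ in place of $\varphi$) to get $\partial_{x_i}T_t\Phi_i(x)=(a/b)\int\Phi_i(ax+by)y_i\,\gamma(dy)$; together with the obvious formula for $x_iT_t\Phi_i$ this gives the pointwise representation
$$
\mathrm{div}_\gamma T_t\Phi(x)=\frac1b\int_{\mathbb{R}^n}\bigl\langle\Phi(ax+by),\,ay-bx\bigr\rangle\,\gamma(dy).
$$
Jensen's inequality in $y$ (the function $s\mapsto|s|^p$ is convex and $\gamma$ is a probability measure), Fubini's theorem, and then the change of variables $(x,y)\mapsto(u,v)=(ax+by,\,ay-bx)$ reduce $\|\mathrm{div}_\gamma T_t\Phi\|_p^p$ to $b^{-p}\iint|\langle\Phi(u),v\rangle|^p\,\gamma(dv)\,\gamma(du)$. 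For fixed $u$ the variable $v\mapsto\langle\Phi(u),v\rangle$ is, under $\gamma$, centered Gaussian with variance $|\Phi(u)|^2$, so the inner integral is exactly $C(p)^p|\Phi(u)|^p$, and integrating in $u$ gives $\|\mathrm{div}_\gamma T_t\Phi\|_p\le C(p)b^{-1}\|\Phi\|_p$.

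For the gradient estimate, fix $q\in(1,\infty]$ and let $q'=q/(q-1)$ be the conjugate exponent (so $q'=1$ if $q=\infty$). Since $\nabla T_t\varphi$ is bounded and $C_0^\infty(\mathbb{R}^n,\mathbb{R}^n)$ is dense in $L^{q'}(\gamma;\mathbb{R}^n)$, the norm $\|\nabla T_t\varphi\|_q$ is the supremum of $\int\langle\nabla T_t\varphi,\Psi\rangle\,d\gamma$ over $\Psi\in C_0^\infty(\mathbb{R}^n,\mathbb{R}^n)$ with $\|\Psi\|_{q'}\le1$; for $q=\infty$ this uses $L^\infty(\gamma)=L^1(\gamma)^*$. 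For such $\Psi$, Gaussian integration by parts together with the self-adjointness of $T_t$ on $L^2(\gamma)$ give
$$
\int_{\mathbb{R}^n}\langle\nabla T_t\varphi,\Psi\rangle\,d\gamma=-\int_{\mathbb{R}^n}(T_t\varphi)\,\mathrm{div}_\gamma\Psi\,d\gamma=-\int_{\mathbb{R}^n}\varphi\,T_t(\mathrm{div}_\gamma\Psi)\,d\gamma .
$$
A short computation parallel to the one above — integrating by parts in $y$ inside $T_t(\mathrm{div}_\gamma\Psi)(x)=\int[\mathrm{div}_\gamma\Psi](ax+by)\,\gamma(dy)$ so as to replace each $(\partial_i\Psi_i)(ax+by)$ by $b^{-1}y_i\Psi_i(ax+by)$ — yields the commutation relation $T_t(\mathrm{div}_\gamma\Psi)=a\,\mathrm{div}_\gamma T_t\Psi$. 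Hence, by Hölder's inequality and the divergence estimate applied to $\Psi$ with exponent $q'$,
$$
\Bigl|\int_{\mathbb{R}^n}\langle\nabla T_t\varphi,\Psi\rangle\,d\gamma\Bigr|\le a\,\|\varphi\|_q\,\|\mathrm{div}_\gamma T_t\Psi\|_{q'}\le C(q')\,\frac ab\,\|\varphi\|_q\,\|\Psi\|_{q'},
$$
and the supremum over $\Psi$ gives $\|\nabla T_t\varphi\|_q\le C\bigl(q/(q-1)\bigr)\,e^{-t}(1-e^{-2t})^{-1/2}\,\|\varphi\|_q$.

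The one place that requires an idea is the passage to the gradient estimate. The divergence bound is essentially forced once the rotation is spotted: the ``bad'' variable $x$ becomes a rotated coordinate that is integrated out against $\gamma$, leaving a one-dimensional absolute Gaussian moment, namely $C(p)^p$. But the same substitution applied directly to $\nabla T_t\varphi$ via \eqref{eq1} fails, because after the rotation the vector factor $y$ becomes $bu+av$, which still involves $u$, so the resulting integral is not controlled by $\|\varphi\|_q$ uniformly in the dimension. The remedy is to dualize first and shift the gradient onto the test field by Gaussian integration by parts, which turns the gradient bound into the already proved divergence bound through the identity $T_t\,\mathrm{div}_\gamma=e^{-t}\,\mathrm{div}_\gamma\,T_t$. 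The remaining points are routine: both integrations by parts are legitimate since $\Psi$ has compact support and $T_t\varphi$ is smooth and bounded with bounded derivatives; and the self-adjointness is applied to $\varphi\in L^2(\gamma)$ and $\mathrm{div}_\gamma\Psi\in L^2(\gamma)$.
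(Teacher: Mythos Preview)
Your proof is correct and follows essentially the same approach as the paper: first establish the divergence bound via the pointwise representation, Jensen's inequality, and the rotation $(x,y)\mapsto(ax+by,\,ay-bx)$ preserving $\gamma\otimes\gamma$; then obtain the gradient bound by duality. The paper states the key identity $\int\langle\Phi,\nabla T_t\varphi\rangle\,d\gamma=-e^{-t}\int\varphi\,\mathrm{div}_\gamma T_t\Phi\,d\gamma$ directly as ``easily verified,'' while you derive it in two steps (self-adjointness of $T_t$ followed by the commutation $T_t\,\mathrm{div}_\gamma=e^{-t}\,\mathrm{div}_\gamma\,T_t$), but this is the same identity with more detail supplied.
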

\begin{proof}
For any vector field $\Phi\in C_0^\infty(\mathbb{R}^n, \mathbb{R}^n)$
we have (omitting the indication of $\mathbb{R}^n$ in the limits of integration below)
$$
\mathrm{div}_\gamma T_t\Phi(x) =
(1-e^{-2t})^{-1/2}\int \langle\Phi(e^{-t}x+\sqrt{1-e^{-2t}}y), e^{-t}y - \sqrt{1-e^{-2t}}x\rangle\, \gamma(dy).
$$
Thus, whenever $p\in[1, \infty)$, we obtain
\begin{align*}
\|\mathrm{div}_\gamma  & T_t\Phi\|_p
\\
&=
(1-e^{-2t})^{-1/2}\biggl(\int \biggl|\int\langle\Phi(e^{-t}x+\sqrt{1-e^{-2t}}y), e^{-t}y -
\sqrt{1-e^{-2t}}x\rangle\gamma(dy)\biggr|^p\, \gamma(dx)\biggr)^{1/p}
\\
&\le
(1-e^{-2t})^{-1/2}\biggl(\int \int \Bigl|\langle\Phi(e^{-t}x
+\sqrt{1-e^{-2t}}y), e^{-t}y - \sqrt{1-e^{-2t}}x\rangle\Bigr|^p\, \gamma(dy)\,\gamma(dx)\biggr)^{1/p}
\\
&=
(1-e^{-2t})^{-1/2}\biggl(\int \int|\langle\Phi(x), y\rangle|^p\, \gamma(dy)\, \gamma(dx)\biggr)^{1/p}
\\
&=
(1-e^{-2t})^{-1/2}\biggl((2\pi)^{-1/2}
\int_{\mathbb{R}}|s|^pe^{-\frac{s^2}{2}}\, ds\biggr)^{1/p}\biggl(\int |\Phi(x)|^p\, \gamma(dx)\biggr)^{1/p}
\\
&=
(1-e^{-2t})^{-1/2}\|\Phi\|_p\biggl((2\pi)^{-1/2}\int_{\mathbb{R}}|s|^pe^{-\frac{s^2}{2}}\, ds\biggr)^{1/p}=
C(p)(1-e^{-2t})^{-1/2}\|\Phi\|_p.
\end{align*}
Hence the second part of the lemma is proved.

For proving the first part of the lemma we observe that, letting $p=q/(q-1)$, one has
$$
\|\nabla T_t\varphi\|_q =
\sup_{\substack{\Phi\in C_0^\infty\\ \|\Phi\|_p \le1}}
\int\langle\Phi, \nabla T_t\varphi\rangle \, d\gamma=
\sup_{\substack{\Phi\in C_0^\infty\\ \|\Phi\|_p \le1}}
-e^{-t}\int\varphi{\rm div}_\gamma T_t\Phi\, d\gamma.
$$
Applying the already obtained estimate for the divergence ${\rm div}_\gamma T_t\Phi$
in case $q\in(1, \infty]$ we obtain the announced estimate
$$
\|\nabla T_t\varphi\|_q\le C(p)\frac{e^{-t}}{\sqrt{1-e^{-2t}}}\|\varphi\|_q,
$$
which completes the proof.
\end{proof}

We now proceed to the first main result of this section.

\begin{theorem}\label{T2.1}
For any function $f\in B^\alpha_p(\gamma)$, where $p\in[1, \infty)$, we have
$$
\|f - T_tf\|_p\le  2^{1-\alpha}C(p)^\alpha c_t^\alpha\ V_\gamma^{p,\alpha}(f),
$$
where $c_t$ is defined by {\rm(\ref{cons})} and
$$
C(p) := \biggl((2\pi)^{-1/2}\int_{\mathbb{R}}|s|^pe^{-\frac{s^2}{2}}\, ds\biggr)^{1/p}.
$$
\end{theorem}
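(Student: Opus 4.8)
The plan is to imitate the ``easy direction'' of Theorem~\ref{T0.1}, with the elementary averaging operator replaced by the Ornstein--Uhlenbeck semigroup. Fix $t>0$. Since $T_t$ is a contraction on every $L^r(\gamma)$, $r\in[1,\infty]$, and $f\in L^p(\gamma)$, the function $f-T_tf$ lies in $L^p(\gamma)$, so by $L^p$--$L^q$ duality it is enough to estimate $\int \varphi\,(f-T_tf)\,d\gamma$ for $\varphi\in C_0^\infty(\mathbb{R}^n)$ with $\|\varphi\|_q\le 1$ (here $q=\infty$ when $p=1$). Using the self-adjointness of $T_t$ with respect to $\gamma$ I would first rewrite this as $\int f\,(\varphi-T_t\varphi)\,d\gamma$, thereby moving all the work onto the test function $\varphi$.

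Next I would write $\varphi-T_t\varphi$ as a Gaussian divergence. Since $\tfrac{d}{ds}T_s\varphi=LT_s\varphi$ and $Lg=\mathrm{div}_\gamma\nabla g$ for smooth $g$ (this is immediate from the definitions of $L$ and $\mathrm{div}_\gamma$), the fundamental theorem of calculus gives
$$
\varphi-T_t\varphi=-\int_0^t LT_s\varphi\,ds=-\mathrm{div}_\gamma\Psi,\qquad \Psi:=\int_0^t\nabla T_s\varphi\,ds ,
$$
where $\nabla T_s\varphi$ is given by formula~(\ref{eq1}). The field $\Psi$ is smooth, $\mathrm{div}_\gamma\Psi=T_t\varphi-\varphi$, and (as the norm estimate below shows) $\Psi\in L^q(\gamma)$ with $\mathrm{div}_\gamma\Psi\in L^q(\gamma)$; hence the extension of Definition~\ref{D2.1} to such vector fields applies and yields
$$
\Bigl|\int_{\mathbb{R}^n} f\,(\varphi-T_t\varphi)\,d\gamma\Bigr|
=\Bigl|\int_{\mathbb{R}^n} f\,\mathrm{div}_\gamma\Psi\,d\gamma\Bigr|
\le V_\gamma^{p,\alpha}(f)\,\|\Psi\|_q^{\alpha}\,\|\mathrm{div}_\gamma\Psi\|_q^{1-\alpha}.
$$

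It then remains to bound the two norms. By contractivity of $T_t$ on $L^q(\gamma)$, $\|\mathrm{div}_\gamma\Psi\|_q=\|T_t\varphi-\varphi\|_q\le 2\|\varphi\|_q$. For the other factor, Minkowski's inequality for integrals together with the first bound of Lemma~\ref{lem2.1} applied with exponent $q$ (so the constant is $C\bigl(q/(q-1)\bigr)=C(p)$, which for $p=1$ means $q=\infty$) and the definition~(\ref{cons}) of $c_t$ give
$$
\|\Psi\|_q\le\int_0^t\|\nabla T_s\varphi\|_q\,ds
\le C(p)\,\|\varphi\|_q\int_0^t\frac{e^{-s}}{\sqrt{1-e^{-2s}}}\,ds=C(p)\,c_t\,\|\varphi\|_q .
$$
Combining the three displays and using $\|\varphi\|_q\le1$ produces the bound $V_\gamma^{p,\alpha}(f)\,(C(p)c_t)^{\alpha}\,2^{1-\alpha}$, and taking the supremum over $\varphi$ gives the assertion. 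The main obstacle I anticipate is purely technical: justifying that $\Psi$ is an admissible test field, i.e.\ that $(s,x)\mapsto T_s\varphi(x)$ may be differentiated under the $s$-integral near the (integrable) singularity at $s=0$ and that $\Psi$ is smooth with $\Psi,\mathrm{div}_\gamma\Psi\in L^q(\gamma)$; once this is in place, the rest is just keeping track of the constants coming from Lemma~\ref{lem2.1} and the $L^q$-contractivity of the semigroup.
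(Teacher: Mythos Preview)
Your proposal is correct and follows essentially the same route as the paper: dualize, move $T_t$ onto the test function, write $T_t\varphi-\varphi=\mathrm{div}_\gamma\Psi$ with $\Psi=\int_0^t\nabla T_s\varphi\,ds$, apply Definition~\ref{D2.1} to $\Psi$, and bound $\|\mathrm{div}_\gamma\Psi\|_q\le 2$ and $\|\Psi\|_q\le C(p)c_t$ via contractivity and Lemma~\ref{lem2.1}. The paper handles your anticipated technical point simply by noting that $\Psi\in C_b^\infty(\mathbb{R}^n,\mathbb{R}^n)$, so the extension of the defining inequality applies directly.
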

\begin{proof}
Let $q=p/(p-1)$ and let $\varphi\in C_0^\infty(\mathbb{R}^n)$ be such that $\|\varphi\|_q\le1$.
We have
\begin{multline*}
\int (T_tf - f)\varphi\, d\gamma
= \int (T_t\varphi -\varphi)fd\gamma = \int \biggl[\int_0^t\frac{\partial}{\partial\tau}T_\tau\varphi d\tau\biggr] f d\gamma \\
=
\int \biggl[\int_0^t LT_\tau\varphi d\tau\biggr] f d\gamma = \int \biggl[\int_0^t \mathrm{div}_\gamma\nabla T_\tau\varphi d\tau\biggr] f d\gamma =
\int \mathrm{div}_\gamma\biggl[\int_0^t\nabla T_\tau\varphi d\tau\biggr] f d\gamma.
\end{multline*}
Note that
$$
x\mapsto\int_0^t\nabla T_\tau\varphi(x) d\tau\in C^\infty_b(\mathbb{R}^n, \mathbb{R}^n)
\quad \forall\in t\ge0.
$$
Thus, by the definition of the space $B_p^\alpha(\gamma)$ we have (again omitting the indication of $\mathbb{R}^n$
in the limits of integration)
\begin{multline*}
\int \mathrm{div}_\gamma\biggl[\int_0^t\nabla T_\tau\varphi \, d\tau\biggr] f \, d\gamma
\\
\le
V_\gamma^{p,\alpha}(f)\biggl(\int \biggl|\int_0^t\nabla T_\tau\varphi \, d\tau\biggr|^q\, d\gamma\biggr)^{\alpha/q}
\biggl(\int \biggl|\mathrm{div}_\gamma\int_0^t\nabla T_\tau\varphi\, d\tau\biggr|^q\, d\gamma\biggr)^{(1-\alpha)/q}.
\end{multline*}
We now estimate each term on the right-hand side separately.
Let us begin with the second term:
$$
\biggl(\int \biggl|\mathrm{div}_\gamma\int_0^t\nabla T_\tau\varphi \, d\tau\biggr|^q\, d\gamma\biggr)^{1/q}=
\biggl(\int |T_t\varphi -\varphi|^q\, d\gamma\biggr)^{1/q}\le \|T_t\varphi\|_q+\|\varphi\|_q\le2.
$$
Let us now consider the first term.
By Lemma \ref{lem2.1} one has
$$
\|\nabla T_\tau\varphi\|_q\le
C(p)\frac{e^{-\tau}}{\sqrt{1-e^{-2\tau}}}\|\varphi\|_q\le C(p)\frac{e^{-\tau}}{\sqrt{1-e^{-2\tau}}}.
$$
Hence, applying again Minkowski's inequality for integrals, we have
\begin{align*}
\biggl(\int \biggl|\int_0^t\nabla T_\tau\varphi d\tau\biggr|^q\, d\gamma\biggr)^{1/q}
&\le
\biggl(\int \biggl(\int_0^t|\nabla T_\tau\varphi| \, d\tau\biggr)^q\, d\gamma\biggr)^{1/q}\\
&\le
\int_0^t\biggl(\int |\nabla T_\tau\varphi|^q\, d\gamma\biggr)^{1/q}\, d\tau
\le
C(p)\int_0^t\frac{e^{-\tau}}{\sqrt{1-e^{-2\tau}}}\, d\tau = C(p)c_t.
\end{align*}
Thus, we arrive at the estimate
$$
\int (T_tf - f)\varphi \, d\gamma\le 2^{1-\alpha}C(p)^\alpha c_t^\alpha\ V_\gamma^{p,\alpha}(f).
$$
Taking the supremum over functions $\varphi\in C_0^\infty(\mathbb{R}^n)$ with $\|\varphi\|_q\le1$
we obtain the announced bound.
\end{proof}

By passing to the limit $t\to\infty$ in the
previous theorem we obtain the following Poincar\'e-type inequality.
Let $\mathbb{E}f$ denote the integral of $f$ against~$\gamma$.

\begin{corollary}\label{poinc}
For any function $f\in B_p^\alpha(\gamma)$ with $p\in[1, \infty)$, we have
$$
\|f - \mathbb{E}f\|_p\le  2^{1-2\alpha}\pi^{\alpha}C(p)^\alpha\ V_\gamma^{p,\alpha}(f).
$$
\end{corollary}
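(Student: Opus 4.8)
The plan is to derive Corollary~\ref{poinc} directly from Theorem~\ref{T2.1} by letting $t\to\infty$. First I would recall that for $f\in L^p(\gamma)$ with $p\in[1,\infty)$ one has $T_tf\to\mathbb{E}f$ in $L^p(\gamma)$ as $t\to\infty$; this is a standard property of the Ornstein--Uhlenbeck semigroup (it follows, for instance, from the explicit formula $T_tf(x)=\int f(e^{-t}x+\sqrt{1-e^{-2t}}y)\,\gamma(dy)$ together with dominated convergence and density of bounded continuous functions, or from spectral/ergodic considerations). Hence $\|f-T_tf\|_p\to\|f-\mathbb{E}f\|_p$.

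Next I would pass to the limit in the inequality supplied by Theorem~\ref{T2.1}, namely
$$
\|f-T_tf\|_p\le 2^{1-\alpha}C(p)^\alpha c_t^\alpha\,V_\gamma^{p,\alpha}(f),
$$
using the fact, noted right after~(\ref{cons}), that $c_t\to\pi/2$ as $t\to\infty$. Therefore the right-hand side converges to $2^{1-\alpha}C(p)^\alpha(\pi/2)^\alpha V_\gamma^{p,\alpha}(f)$, and combining this with the convergence of the left-hand side gives
$$
\|f-\mathbb{E}f\|_p\le 2^{1-\alpha}C(p)^\alpha(\pi/2)^\alpha\,V_\gamma^{p,\alpha}(f)
= 2^{1-\alpha}2^{-\alpha}\pi^\alpha C(p)^\alpha\,V_\gamma^{p,\alpha}(f)
= 2^{1-2\alpha}\pi^\alpha C(p)^\alpha\,V_\gamma^{p,\alpha}(f),
$$
which is exactly the asserted bound.

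There is essentially no serious obstacle here: the corollary is a limiting case of the theorem, and the only point requiring a word of justification is the $L^p$-convergence $T_tf\to\mathbb{E}f$, which is classical. One should perhaps remark that if $V_\gamma^{p,\alpha}(f)=\infty$ the inequality is trivial, and otherwise $f\in B^\alpha_p(\gamma)$ so Theorem~\ref{T2.1} applies with a finite constant, making the passage to the limit legitimate. No monotonicity in $t$ is needed — we simply take $t\to\infty$ along any sequence and use that both sides have limits.
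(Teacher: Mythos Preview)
Your proof is correct and follows exactly the approach indicated in the paper: the corollary is obtained by letting $t\to\infty$ in Theorem~\ref{T2.1}, using $c_t\to\pi/2$ and $T_tf\to\mathbb{E}f$ in $L^p(\gamma)$.
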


We now obtain an
analog of the fractional Hardy--Landau--Littlewood inequality with respect to a Gaussian measure,
which generalizes a similar estimate for $\alpha=1$ from~\cite{BWSH}.
Recall that the classical Hardy--Landau--Littlewood inequality on the real line with Lebesgue measure
states that $\|\varphi'\|_1^2\le 2\|\varphi\|_1\|\varphi''\|_1$, which can be rewritten as
$\|f\|_1^2\le 2 \|f'\|_1 \|f\|_{{\rm K}}$, where $\|f\|_{{\rm K}}$
for $f$ with zero integral is the Kantorovich norm  defined as the supremum
of integrals of $fg$ over $1$-Lipschitz functions~$g$.

\begin{theorem}\label{T2.3}
For any function $f\in B^\alpha_1(\gamma)$ with zero integral we have
$$
\|f\|_1\le  3(V_\gamma^{1,\alpha}(f))^{1/(1+\alpha)}\|f\|^{\alpha/(1+\alpha)}_{\rm K,\gamma}.
$$
\end{theorem}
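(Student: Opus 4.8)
The plan is to interpolate between the Poincaré-type bound of Corollary~\ref{poinc} (which controls $\|f\|_1$ by $V_\gamma^{1,\alpha}(f)$) and the approximation estimate of Theorem~\ref{T2.1} (which controls $\|f-T_tf\|_1$ by $c_t^\alpha V_\gamma^{1,\alpha}(f)$), using the Kantorovich norm to capture the "smoothing" produced by $T_t$. First I would write the elementary triangle inequality $\|f\|_1\le\|f-T_tf\|_1+\|T_tf\|_1$ for a parameter $t>0$ to be chosen, and estimate the two terms by two genuinely different mechanisms. By Theorem~\ref{T2.1} with $p=1$ (so $C(1)=(2/\pi)^{1/2}$) and the bound $c_t\le(2t)^{1/2}$, the first term is at most $2^{1-\alpha}C(1)^\alpha c_t^\alpha V_\gamma^{1,\alpha}(f)\le (\text{const})\,t^{\alpha/2}V_\gamma^{1,\alpha}(f)$.

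The key step is to estimate $\|T_tf\|_1$ in terms of the Kantorovich norm. Since $T_t$ is self-adjoint on $L^2(\gamma)$ and $T_tf$ has zero integral, $\|T_tf\|_1=\sup\{\int (T_tf)g\,d\gamma:\|g\|_\infty\le1\}=\sup\{\int f\,T_tg\,d\gamma:\|g\|_\infty\le1\}$. The point is that $T_tg$ is Lipschitz even when $g$ is merely bounded: from the representation \eqref{eq1}, $\nabla T_tg(x)=\frac{e^{-t}}{\sqrt{1-e^{-2t}}}\int g(e^{-t}x+\sqrt{1-e^{-2t}}y)\,y\,\gamma(dy)$, so $|\nabla T_tg(x)|\le\frac{e^{-t}}{\sqrt{1-e^{-2t}}}\|g\|_\infty\int|y|\,\gamma(dy)\le\frac{e^{-t}}{\sqrt{1-e^{-2t}}}\sqrt{n}\,\|g\|_\infty$. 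A cleaner, dimension-free bound follows by testing against unit vectors: for each unit $v$, $|\partial_v T_tg(x)|\le\frac{e^{-t}}{\sqrt{1-e^{-2t}}}\|g\|_\infty\int|\langle y,v\rangle|\,\gamma(dy)=\frac{e^{-t}}{\sqrt{1-e^{-2t}}}\sqrt{2/\pi}\,\|g\|_\infty$, so $T_tg$ is $L_t$-Lipschitz with $L_t=\sqrt{2/\pi}\,\frac{e^{-t}}{\sqrt{1-e^{-2t}}}$. Hence $\int f\,T_tg\,d\gamma=L_t\int f\,(L_t^{-1}T_tg)\,d\gamma\le L_t\|f\|_{\mathrm K,\gamma}$, giving $\|T_tf\|_1\le L_t\|f\|_{\mathrm K,\gamma}\le\sqrt{2/\pi}\,\frac{e^{-t}}{\sqrt{1-e^{-2t}}}\|f\|_{\mathrm K,\gamma}$. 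For $t$ in a bounded range, say $t\le1$, one has $\frac{e^{-t}}{\sqrt{1-e^{-2t}}}\le (\text{const})\,t^{-1/2}$, so $\|T_tf\|_1\le (\text{const})\,t^{-1/2}\|f\|_{\mathrm K,\gamma}$.

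Combining, $\|f\|_1\le A\,t^{\alpha/2}V_\gamma^{1,\alpha}(f)+B\,t^{-1/2}\|f\|_{\mathrm K,\gamma}$ for absolute constants $A,B$ and all $t\in(0,1]$. Optimizing in $t$ — choosing $t^{(\alpha+1)/2}\sim\|f\|_{\mathrm K,\gamma}/V_\gamma^{1,\alpha}(f)$ — yields $\|f\|_1\le(\text{const})(V_\gamma^{1,\alpha}(f))^{1/(1+\alpha)}\|f\|_{\mathrm K,\gamma}^{\alpha/(1+\alpha)}$; one then checks the constant can be taken to be $3$ (tracking $A,B$ and the elementary inequality $a t^{\alpha/2}+bt^{-1/2}\le C_\alpha a^{1/(1+\alpha)}b^{\alpha/(1+\alpha)}$ at the optimal $t$), after first disposing of the degenerate case $\|f\|_{\mathrm K,\gamma}=0$ (then $f=0$ a.e.) and the case where the optimal $t$ exceeds $1$ (then $\|f\|_{\mathrm K,\gamma}\gtrsim V_\gamma^{1,\alpha}(f)$ and the bound follows directly, or alternatively one uses Corollary~\ref{poinc}). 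I expect the main obstacle to be purely bookkeeping: verifying that the numerical constant collapses to $3$ requires care with the factors $2^{1-\alpha}$, $C(1)^\alpha$, $\sqrt{2/\pi}$, the bound $c_t\le(2t)^{1/2}$, and the constant in the arithmetic–geometric optimization, all while keeping the estimate uniform in $\alpha\in(0,1]$ and dimension-free; no conceptual difficulty should arise beyond the self-adjointness trick and the Lipschitz bound on $T_tg$.
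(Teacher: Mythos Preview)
Your proposal is correct and follows essentially the same route as the paper: the same triangle-inequality splitting $\|f\|_1\le\|f-T_tf\|_1+\|T_tf\|_1$, the same use of Theorem~\ref{T2.1} for the first term, the same self-adjointness trick $\int\varphi\,T_tf\,d\gamma=\int f\,T_t\varphi\,d\gamma$ combined with the Lipschitz bound on $T_t\varphi$ (which is precisely Lemma~\ref{lem2.1} with $q=\infty$) to control $\|T_tf\|_1$ by $\|f\|_{\rm K,\gamma}$, and the same optimization in~$t$. The only cosmetic difference is that the paper uses the inequality $e^{-t}/\sqrt{1-e^{-2t}}\le t^{-1/2}$, valid for \emph{all} $t>0$, so no restriction to $t\le1$ is needed and the edge cases you mention (optimal $t>1$, appeal to Corollary~\ref{poinc}) never arise; the optimization is carried out directly and the constant $3$ falls out immediately from $\|f\|_1\le t^{-1/2}\|f\|_{\rm K,\gamma}+2V_\gamma^{1,\alpha}(f)t^{\alpha/2}$ at $t^{1/2}=(\|f\|_{\rm K,\gamma}/V_\gamma^{1,\alpha}(f))^{1/(1+\alpha)}$.
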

\begin{proof} Let us fix a number $t\in (0, \infty)$.
By the triangle inequality and Theorem \ref{T2.1} we have
$$
\|f\|_1\le \|T_tf\|_1 + \|f-T_tf\|_1\le \|T_tf\|_1 + 2V_\gamma^{1,\alpha}(f)t^{\alpha/2}.
$$
We need to estimate $\|T_tf\|_1$. For every function $\varphi\in C_0^\infty(\mathbb{R}^n)$
with $\|\varphi\|_\infty\le1$ we have by the definition of the Kantorovich norm
$$
\int \varphi T_tf\, d\gamma = \int T_t\varphi f\, d\gamma\le \|\nabla T_t\varphi\|_\infty\|f\|_{\rm K, \gamma}.
$$
Since $\|\varphi\|_\infty\le1$, by Lemma \ref{lem2.1} the last expression is not greater than
$$
\frac{e^{-t}}{\sqrt{1-e^{-2t}}}\|f\|_{\rm K, \gamma}\le t^{-1/2}\|f\|_{\rm K, \gamma}.
$$
Thus,
$$
\|f\|_1 \le t^{-1/2}\|f\|_{\rm K, \gamma} + 2V_\gamma^{1,\alpha}(f)t^{\alpha/2}.
$$
Setting $t^{1/2}= (\|f\|_{\rm K, \gamma}/V_\gamma^{1,\alpha}(f))^{1/(1+\alpha)}$ we obtain
$$
\|f\|_1 \le 3(V_\gamma^{1,\alpha}(f))^{1/(1+\alpha)}\|f\|_{\rm K, \gamma}^{\alpha/(1+\alpha)},
$$
which completes the proof.
\end{proof}

The next lemma is an analog of Theorem \ref{T2.1} with
the quantity $V_\gamma^{p,\alpha}(f)$  replaced by~$U^{p, \alpha}_\gamma(f)$.

\begin{lemma}\label{lem2.2}
Let $f\in L^p(\gamma)$ with $p\in[1,\infty)$ be such that $U^{p, \alpha}_\gamma(f)<\infty$.
Then
$$
\|f-T_tf\|_p\le 4C(p)\alpha^{-1}t^{\alpha/2}U^{p, \alpha}_\gamma(f),
$$
where
$$
C(p) := \biggl((2\pi)^{-1/2}\int_{\mathbb{R}}|s|^pe^{-\frac{s^2}{2}}ds\biggr)^{1/p}.
$$
\end{lemma}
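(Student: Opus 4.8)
The plan is to imitate the proof of Lemma~\ref{lem1.2}, replacing the heat semigroup by the Ornstein--Uhlenbeck semigroup $\{T_t\}$, the Euclidean divergence by the Gaussian divergence $\mathrm{div}_\gamma$, and the symmetry of the heat kernel by the self-adjointness of $T_t$ on $L^2(\gamma)$. Since $f\in L^p(\gamma)$ and $T_t$ is a contraction on $L^p(\gamma)$, we have $T_tf-f\in L^p(\gamma)$, so
\[
\|f-T_tf\|_p=\sup\Bigl\{\int_{\mathbb{R}^n}\varphi\,(T_tf-f)\,d\gamma:\ \varphi\in C_0^\infty(\mathbb{R}^n),\ \|\varphi\|_q\le1\Bigr\},\qquad 1/p+1/q=1,
\]
and it suffices to bound the integral on the right for a fixed such $\varphi$.

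First I would use the self-adjointness of $T_t$, the identity $\frac{d}{ds}T_s\varphi=LT_s\varphi=\mathrm{div}_\gamma\nabla T_s\varphi$, and Fubini's theorem (legitimate since $\|LT_s\varphi\|_\infty\le\|L\varphi\|_\infty$ and $f\in L^1(\gamma)$) to write
\[
\int\varphi\,(T_tf-f)\,d\gamma=\int(T_t\varphi-\varphi)f\,d\gamma=\int_0^t\!\!\int \mathrm{div}_\gamma\nabla T_s\varphi\cdot f\,d\gamma\,ds.
\]
Then, using $T_s=T_{s/2}T_{s/2}$, the commutation of $L$ with $T_{s/2}$, the self-adjointness of $T_{s/2}$ to move one factor onto $f$, and the Gaussian integration-by-parts formula $\int \mathrm{div}_\gamma\Psi\cdot g\,d\gamma=-\int\langle\Psi,\nabla g\rangle\,d\gamma$ with $g=T_{s/2}f$ and $\Psi=\nabla T_{s/2}\varphi$, I would rewrite this as
\[
\int_0^t\!\!\int \mathrm{div}_\gamma\nabla T_{s/2}\varphi\cdot T_{s/2}f\,d\gamma\,ds=-\int_0^t\!\!\int\langle\nabla T_{s/2}\varphi,\nabla T_{s/2}f\rangle\,d\gamma\,ds.
\]
Here $T_{s/2}f\in W^{p,1}(\gamma)$ for every $s>0$: for $p>1$ this is the recalled regularizing property of $T_t$, and for $p=1$ it is exactly what the finiteness of $U^{1,\alpha}_\gamma(f)$ is defined to guarantee, so the integration by parts is justified.

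It then remains to estimate each inner integral by Hölder's inequality and to invoke the bounds already available. Lemma~\ref{lem2.1} (with $q/(q-1)=p$, the endpoint $q=\infty$ included) gives $\|\nabla T_{s/2}\varphi\|_q\le C(p)\,e^{-s/2}(1-e^{-s})^{-1/2}\|\varphi\|_q$, and the definition of $U^{p,\alpha}_\gamma(f)$ gives $\|\nabla T_{s/2}f\|_p\le U^{p,\alpha}_\gamma(f)\,(s/2)^{-(1-\alpha)/2}$. The elementary inequality $1-e^{-s}\ge s e^{-s}$ yields $e^{-s/2}(1-e^{-s})^{-1/2}\le s^{-1/2}$, so the whole $s$-integral is dominated by
\[
C(p)\,U^{p,\alpha}_\gamma(f)\,\|\varphi\|_q\int_0^t s^{-1/2}(s/2)^{-(1-\alpha)/2}\,ds=\frac{2^{(3-\alpha)/2}}{\alpha}\,C(p)\,U^{p,\alpha}_\gamma(f)\,\|\varphi\|_q\,t^{\alpha/2}\le 4\alpha^{-1}C(p)\,U^{p,\alpha}_\gamma(f)\,\|\varphi\|_q\,t^{\alpha/2},
\]
using $\alpha\le1$ in the last step. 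Taking the supremum over $\varphi$ with $\|\varphi\|_q\le1$ gives the claim.

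The argument is essentially routine. The only points that need a little care are: the verification that the $s$-integrand $\int|\langle\nabla T_{s/2}\varphi,\nabla T_{s/2}f\rangle|\,d\gamma$ is integrable on $(0,t)$, which follows from the same estimates since it is $O(s^{\alpha/2-1})$ with $\alpha/2-1>-1$; the validity of the Gaussian integration-by-parts formula against the bounded smooth field $\nabla T_{s/2}\varphi$ for $g=T_{s/2}f\in W^{p,1}(\gamma)$; and the use of the $q=\infty$ case of Lemma~\ref{lem2.1} when $p=1$. None of these is a genuine obstacle, so I do not expect the proof to contain any substantial difficulty beyond the analog of Lemma~\ref{lem1.2}.
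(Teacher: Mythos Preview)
Your proof is correct and follows essentially the same approach as the paper: duality, self-adjointness to pass $T_t$ onto $\varphi$, differentiation in time, the splitting $T_s=T_{s/2}T_{s/2}$, Gaussian integration by parts, and then the gradient bound from Lemma~\ref{lem2.1} together with the definition of $U^{p,\alpha}_\gamma(f)$ and the elementary estimate $e^{-s/2}(1-e^{-s})^{-1/2}\le s^{-1/2}$. The only difference is cosmetic---the paper substitutes $s\mapsto \tau/2$ in the final integral while you keep the original variable---and your added remarks on the $p=1$ case and on the integrability of the $s$-integrand make explicit points that the paper leaves implicit.
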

\begin{proof}
For any function $\varphi\in C_0^\infty(\mathbb{R}^n)$ we have
\begin{multline*}
\int \varphi (T_tf -f)\, d\gamma
=
\int (T_t\varphi -\varphi)f\, d\gamma
=
\int \biggl[\int_0^t\frac{\partial}{\partial\tau}T_\tau\varphi \, d\tau\biggr] f\, d\gamma
\\
=
\int \biggl[\int_0^t LT_\tau\varphi\, d\tau\biggr] f\, d\gamma
= \int \biggl[\int_0^t \mathrm{div}_\gamma\nabla T_\tau\varphi \, d\tau\biggr] f \, d\gamma =
\int_0^t\int T_{\tau/2}\mathrm{div}_\gamma\nabla T_{\tau/2}\varphi f \, d\gamma \, d\tau,
\end{multline*}
where the last equality is due to the known identity $LT_t u = T_t L u$
for functions $u\in C_b^\infty(\mathbb{R}^n)$.
The gradient $\nabla T_{\tau/2}\varphi$ is estimated by Lemma \ref{lem2.1} as follows:
$$
\|\nabla T_{\tau/2}\varphi\|_q\le C(p)\frac{e^{-(\tau/2)}}{\sqrt{1-e^{-\tau}}}\|\varphi\|_q.
$$
Thus,
\begin{align*}
\int \varphi (T_tf -f)\, d\gamma &=
\int_0^t\int T_{\tau/2}\mathrm{div}_\gamma\nabla T_{\tau/2}\varphi f \, d\gamma \, d\tau
\\
&=
\int_0^t\int\mathrm{div}_\gamma\nabla T_{\tau/2}\varphi T_{\tau/2}f \, d\gamma \, d\tau
=
-\int_0^t\int\langle\nabla T_{\tau/2}\varphi, \nabla T_{\tau/2}f\rangle\, d\gamma\, d\tau
\\
&\le
C(p)\|\varphi\|_qU^{p, \alpha}_\gamma(f)
\int_0^t (\tau/2)^{(\alpha-1)/2} \frac{e^{-(\tau/2)}}{\sqrt{1-e^{-\tau}}}
\, d\tau
\\
&\le
C(p)\|\varphi\|_qU^{p, \alpha}_\gamma(f)2\int_0^{t/2} s^{(\alpha-1)/2} s^{-1/2}\, d\tau
=
C(p)4\alpha^{-1}(t/2)^{\alpha/2}\|\varphi\|_qU^{p, \alpha}_\gamma(f).
\end{align*}
Taking the supremum over $\varphi\in C_0^\infty(\mathbb{R}^n)$
and noting that $2^{-\alpha/2}<1$
we obtain the announced estimate.
\end{proof}

Let us proceed to a characterization of the Gaussian Nikolskii--Besov classes
in terms of the behavior  of the Ornstein--Uhlenbeck semigroup near zero.

\begin{theorem}\label{T2.2}
Let $f\in L^p(\gamma)$, $p\in(1, \infty)$.
Then $V^{p,\alpha}_\gamma(f)<\infty$ if and only if
$U^{p,\alpha}_\gamma(f)<\infty$.

Moreover, one has
$$
U^{p,\alpha}_\gamma(f)\le C(q)^{1-\alpha}V^{p, \alpha}_\gamma(f),
\quad
V^{p, \alpha}_\gamma(f)\le(4C(p)\alpha^{-1}+1)U^{p, \alpha}_\gamma(f),
$$
where $1/p + 1/q =1$ and
$$
C(p) := \biggl((2\pi)^{-1/2}\int_{\mathbb{R}}|s|^pe^{-\frac{s^2}{2}}\, ds\biggr)^{1/p}.
$$
\end{theorem}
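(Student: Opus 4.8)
The plan is to follow the proof of Theorem~\ref{T1.2} verbatim, with the heat semigroup $\{P_t\}$ replaced by the Ornstein--Uhlenbeck semigroup $\{T_t\}$, the operator $\mathrm{div}$ replaced by $\mathrm{div}_\gamma$, and the elementary bounds on $\|P_t\Phi\|_q$ and $\|\mathrm{div}P_t\Phi\|_q$ replaced by Lemma~\ref{lem2.1}. The two displayed inequalities are established separately; the hypothesis $p>1$ is used only to guarantee, via the cited regularization property of $T_t$, that $T_tf\in W^{p,1}(\gamma)$ for every $t>0$, so that $U^{p,\alpha}_\gamma(f)$ is genuinely a candidate for being finite.

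\emph{The bound $U^{p,\alpha}_\gamma(f)\le C(q)^{1-\alpha}V^{p,\alpha}_\gamma(f)$.} Assume $V^{p,\alpha}_\gamma(f)<\infty$. For $\Phi\in C_0^\infty(\mathbb{R}^n,\mathbb{R}^n)$ with $\|\Phi\|_q\le1$ I would use the identity
$$
\int\langle\Phi,\nabla T_tf\rangle\,d\gamma=-e^{-t}\int(\mathrm{div}_\gamma T_t\Phi)\,f\,d\gamma,
$$
which is exactly the pairing/self-adjointness computation already carried out in the proof of Lemma~\ref{lem2.1} (it rests on $T_t$ being self-adjoint on $L^2(\gamma)$ together with the commutation $T_t\mathrm{div}_\gamma=e^{-t}\mathrm{div}_\gamma T_t$, and extends to $f\in L^p(\gamma)$, $p>1$). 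Since $T_t\Phi$ is a smooth vector field whose components and $\mathrm{div}_\gamma$-divergence lie in every $L^q(\gamma)$, the extended defining inequality of $B^\alpha_p(\gamma)$ applies, and, combined with $\|T_t\Phi\|_q\le\|\Phi\|_q\le1$ and the estimate $\|\mathrm{div}_\gamma T_t\Phi\|_q\le C(q)(1-e^{-2t})^{-1/2}$ from Lemma~\ref{lem2.1}, it yields, after taking the supremum over such $\Phi$,
$$
\|\nabla T_tf\|_p\le e^{-t}(1-e^{-2t})^{-(1-\alpha)/2}C(q)^{1-\alpha}V^{p,\alpha}_\gamma(f).
$$
Multiplying by $t^{(1-\alpha)/2}$ and using $1-e^{-2t}\ge 2te^{-2t}$ (equivalently $e^{u}\ge 1+u$), one gets $\bigl(t/(1-e^{-2t})\bigr)^{(1-\alpha)/2}e^{-t}\le 2^{-(1-\alpha)/2}e^{-\alpha t}\le1$ for every $t>0$, so $t^{(1-\alpha)/2}\|\nabla T_tf\|_p\le C(q)^{1-\alpha}V^{p,\alpha}_\gamma(f)$, and taking the supremum in $t$ finishes this part.

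\emph{The bound $V^{p,\alpha}_\gamma(f)\le(4C(p)\alpha^{-1}+1)U^{p,\alpha}_\gamma(f)$.} Assume $U^{p,\alpha}_\gamma(f)<\infty$. Fix $\Phi\in C_0^\infty(\mathbb{R}^n,\mathbb{R}^n)$; if $\mathrm{div}_\gamma\Phi\equiv0$ the defining inequality holds trivially, so assume $\mathrm{div}_\gamma\Phi\not\equiv0$. For $t>0$ split
$$
\int(\mathrm{div}_\gamma\Phi)f\,d\gamma=\int(\mathrm{div}_\gamma\Phi)(f-T_tf)\,d\gamma+\int(\mathrm{div}_\gamma\Phi)\,T_tf\,d\gamma.
$$
By Lemma~\ref{lem2.2} the first term is at most $4C(p)\alpha^{-1}t^{\alpha/2}\|\mathrm{div}_\gamma\Phi\|_q\,U^{p,\alpha}_\gamma(f)$; integrating by parts, the second term equals $-\int\langle\Phi,\nabla T_tf\rangle\,d\gamma\le\|\Phi\|_q\|\nabla T_tf\|_p\le t^{(\alpha-1)/2}\|\Phi\|_q\,U^{p,\alpha}_\gamma(f)$. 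Choosing $t=\|\Phi\|_q^2\|\mathrm{div}_\gamma\Phi\|_q^{-2}$ turns both terms into a constant multiple of $\|\Phi\|_q^\alpha\|\mathrm{div}_\gamma\Phi\|_q^{1-\alpha}U^{p,\alpha}_\gamma(f)$, which gives the asserted inequality; the equivalence of the two finiteness conditions then follows.

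The step I expect to be the only real obstacle is controlling the $t$-dependence in the first inequality: in contrast with the heat-semigroup case, where $\|\mathrm{div}P_t\Phi\|_q$ decays like $t^{-1/2}$, here one is left with the less convenient factor $e^{-t}(1-e^{-2t})^{-1/2}$, and one must verify that after multiplying by $t^{(1-\alpha)/2}$ the resulting function of $t$ stays bounded by $1$ on all of $(0,\infty)$ --- which is precisely where $e^{u}\ge 1+u$ and $\alpha>0$ are used. The remaining points (extending the integration-by-parts identity from smooth $f$ to $f\in L^p(\gamma)$, and disposing of the degenerate case $\mathrm{div}_\gamma\Phi\equiv0$) are routine.
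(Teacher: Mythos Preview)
Your proposal is correct and follows essentially the same approach as the paper's proof: the same duality identity $\int\langle\Phi,\nabla T_tf\rangle\,d\gamma=-e^{-t}\int(\mathrm{div}_\gamma T_t\Phi)f\,d\gamma$ combined with Lemma~\ref{lem2.1} for the first inequality, and the same splitting via Lemma~\ref{lem2.2} with the optimal choice $t=\|\Phi\|_q^2\|\mathrm{div}_\gamma\Phi\|_q^{-2}$ for the second. The only cosmetic difference is in the elementary step controlling the factor $e^{-t}(1-e^{-2t})^{-(1-\alpha)/2}t^{(1-\alpha)/2}$: the paper bounds $e^{-t}(1-e^{-2t})^{-(1-\alpha)/2}\le(2t)^{-(1-\alpha)/2}$ directly, while you use $1-e^{-2t}\ge 2te^{-2t}$; both are valid and amount to the same thing.
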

\begin{proof}
Assume that $V^{p,\alpha}_\gamma(f)<\infty$.
For every vector field  $\Phi\in C_0^\infty(\mathbb{R}^n, \mathbb{R}^n)$
with $\|\Phi\|_q\le 1$ one can easily verify that
$$
\int \langle \Phi, \nabla T_tf\rangle \, d\gamma =
-e^{-t}\int {\rm div_\gamma}\bigl(T_t\Phi\bigr)f\, d\gamma.
$$
The previous expression is estimated from above by
$$
e^{-t}V^{p, \alpha}_\gamma(f)\|T_t\Phi\|^\alpha_q\|\mathrm{div}_\gamma T_t\Phi\|^{1-\alpha}_q.
$$
It is known (and easily verified)  that
$$
\|T_t\Phi\|_q
\le \|\Phi\|_q.
$$
Since $q\in[1,\infty)$, by Lemma \ref{lem2.1} we have
$$
\|\mathrm{div}_\gamma T_t\Phi\|_q \le C(q)(1-e^{-2t})^{-1/2}\|\Phi\|_q.
$$
Summing up these estimates we obtain
$$
\int \langle \Phi, \nabla T_tf\rangle\,
 d\gamma \le C(q)^{1-\alpha}V^{p, \alpha}_\gamma(f)e^{-t}(1-e^{-2t})^{-(1-\alpha)/2}
\le C(q)^{1-\alpha}V^{p, \alpha}_\gamma(f)(2t)^{-(1-\alpha)/2}.
$$
Taking the supremum over $\Phi$ we obtain the desired bound.

Let us now assume that $U^{p,\alpha}_\gamma(f)<\infty$.
Let $\Phi\in C_0^\infty(\mathbb{R}^n, \mathbb{R}^n)$.
We have
$$
\int{\rm div}_\gamma\Phi f \, d\gamma =
\int {\rm div}_\gamma\Phi (f - T_tf)\, d\gamma + \int {\rm div}_\gamma\Phi T_tf\, d\gamma.
$$
For the first term on the right Lemma \ref{lem2.2} gives
$$
\int{\rm div}_\gamma\Phi (f - T_tf)\, d\gamma\le\|{\rm div}_\gamma\Phi\|_q\|f - T_tf\|_p\le
4C(p)\alpha^{-1}t^{\alpha/2} U^{p, \alpha}_\gamma(f) \|{\rm div}_\gamma\Phi\|_q.
$$
For the second term we have
$$
\int {\rm div}_\gamma\Phi T_tf\, d\gamma = - \int \langle\Phi, \nabla T_tf\rangle\, d\gamma\le
\|\Phi\|_q\|\nabla T_tf\|_p
\le U^{p,\alpha}_\gamma(f)t^{-(1-\alpha)/2}\|\Phi\|_q.
$$
Thus,
$$
\int {\rm div}_\gamma\Phi f\, d\gamma \le
4C(p)\alpha^{-1}t^{\alpha/2} U^{p, \alpha}_\gamma(f) \|{\rm div}_\gamma\Phi\|_q
+ U^{p,\alpha}_\gamma(f)t^{-(1-\alpha)/2}\|\Phi\|_q.
$$
Taking $t = \|\Phi\|_q^2\|{\rm div}_\gamma\Phi\|_q^{-2}$
we find that
$$
\int {\rm div}_\gamma\Phi f \, d\gamma \le (4C(p)\alpha^{-1}+1)
U^{p, \alpha}_\gamma(f)\|\Phi\|^\alpha_q\|{\rm div}_\gamma\Phi\|^{1-\alpha}_q,
$$
which completes the proof.
\end{proof}

It is worth noting that the implication
$U^{p,\alpha}_\gamma(f)<\infty \Rightarrow V^{p,\alpha}_\gamma(f)<\infty$
is also true for $p=1$ (with the same proof).

\section{Nikolskii--Besov classes with respect
to Gaussian measures on infinite-dimensional spaces}\label{sect6}

We now proceed to the infinite-dimensional case.
Let $X$ be a real Hausdorff locally convex space
with  the topological dual space $X^*$.

Let $\mathcal{FC}^\infty(X)$ be the set of all functions
$\varphi$ on $X$ of the form $\varphi(x) = \psi(l_1(x), \ldots, l_n(x))$,
where $\psi\in C_b^\infty(\mathbb{R}^n)$, $l_i\in X^*$, and let
$\mathcal{FC}_0^\infty(X)$ be its subclass consisting of functions for which
 $\psi$ can be chosen in~$C_0^\infty(\mathbb{R}^n)$.

We observe that $\mathcal{FC}_0^\infty(X)$ is not a linear space (unlike~$\mathcal{FC}^\infty(X)$),
since a nonzero function of class $C_0^\infty(\mathbb{R}^n)$ does not have compact support
as a  function on~$\mathbb{R}^{n+1}$. However, in some cases the class $\mathcal{FC}_0^\infty(X)$ is
more convenient.

Let $\gamma$
be a Radon centered Gaussian measure on~$X$, i.e., it is a probability measure on the Borel $\sigma$-algebra
of~$X$ such that for every Borel set $B\subset X$ the value $\gamma(B)$ equals the supremum
of $\gamma(K)$ over compact sets $K\subset B$, and, in addition, every continuous linear functional $l$
on $X$ is a centered Gaussian random variable on~$(X,\gamma)$.  The latter means that
the induced measure $\gamma\circ l^{-1}$ is either Dirac's measure at zero or has a density
of the form $c_1\exp (-c_2t^2)$.

For any function $f\in L^p(\gamma)$ we set
$$
\|f\|_p := \|f\|_{L^p(\gamma)} := \biggl(\int_{X} |f|^p\, d\gamma\biggr)^{1/p}.
$$

Let $H\subset X$ be the Cameron--Martin space of the measure $\gamma$, i.e.,
the space of all vectors $h$ such that $\gamma_h \sim \gamma$, where $\gamma_h(B)=\gamma(B-h)$.
If $\gamma$ is the countable power of the standard Gaussian measure on the real line and is regarded on the
space $\mathbb{R}^\infty$ of all real sequences,
then $H$ is the standard  Hilbert space~$l^2$
(for the standard Gaussian measure on $\mathbb{R}^n$
the Cameron--Martin space is $\mathbb{R}^n$ itself).
For a general Radon centered Gaussian measure, the Cameron--Martin space
is also a separable Hilbert space (see \cite[Theorem 3.2.7 and Proposition 2.4.6]{GM})
with the inner product $\langle\cdot,\cdot\rangle_H$ and norm $|\cdot|_H$ defined
by
$$
|h|_H=\sup \biggl\{ l(h)\colon\, \int_X l^2\, d\gamma \le 1, \ l\in X^{*}\biggr\}.
$$
Let $\{l_i\}_{i=1}^\infty\subset X^*$ be an  orthonormal basis in the closure $X_\gamma^*$
 of the set $X^*$ in $L^2(\gamma)$.
 There is an orthonormal basis $\{e_i\}_{i=1}^\infty$ in $H$ such that $l_i(e_j) = \delta_{i,j}$.

For every vector $h\in H$ there is a unique element $\widehat{h}\in X_\gamma^*$ such that
$$
l(h)=\int_X l(x)\widehat{h}(x)\, \gamma(dx)\quad \forall\, l\in X^*.
$$
According to the Cameron--Martin formula, for every $h\in H$ the shifted measure
$\gamma(\cdot-h)$ has density $\exp(\widehat{h}-|h|_H^2/2)$ with respect to~$\gamma$.
It follows that the measure $\gamma$ is Fomin differentiable along $h$
and its logarithmic derivative is $-\widehat{h}$, i.e.,
the following integration by parts formula holds:
$$
\int_X \partial_h f(x)\, \gamma(x)=\int_X f(x)\widehat{h}(x)\, \gamma(dx), \quad f\in \mathcal{FC}^\infty(X),
$$
where $\partial_h f(x)=\lim\limits_{t\to 0} t^{-1}(f(x+th)-f(x))$, as in the finite-dimensional case.

We shall use below that for any orthonormal family $l_1,\ldots,l_n\in X_\gamma^*$
the distribution of the vector $(l_1, \ldots, l_n)$, i.e., the image of~$\gamma$,
 is the standard Gaussian measure $\gamma_n$ on $\mathbb{R}^n$.

Let $\mathcal{FC}^\infty(X, H)$ be the set of all vector fields $\Phi$ of the form
$$
\Phi(x) = \sum_{i=1}^n\Psi_i(g_1(x), \ldots, g_n(x))h_i,
$$
where $\Psi_i\in C_b^\infty(\mathbb{R}^n)$, $g_i\in X^*$, $h_i\in H$.
Let $\mathcal{FC}_0^\infty(X, H)$ be the subset of this class consisting of mappings for which
$\Psi_i$ can be chosen with compact support. In this representation we can always take vectors
$h_i$ orthogonal in~$H$ and functionals $g_i$ orthogonal in~$X_\gamma^*$ such that $g_i(h_j)=\delta_{ij}$.

Given $\{l_i\}$ and $\{e_i\}$ as above,
for $\varphi\in\mathcal{FC}^\infty(X)$ of the form $\varphi(x) = \psi(l_1(x), \ldots, l_n(x))$ set
$$
\nabla\varphi(x) = \sum_{j=1}^n\partial_{x_j}\psi (l_1(x), \ldots, l_n(x)) e_j.
$$
Let ${\rm div}_\gamma$ be the ``adjoint operator'' to the gradient operator $\nabla$
with respect to~$\gamma$, i.e.,
for any vector field $\Phi\in \mathcal{FC}^\infty(X, H)$ and any function
$\phi\in \mathcal{FC}^\infty(X)$ one has
$$
\int_X ({\rm div}_\gamma\Phi) \varphi\, d\gamma = -\int_X \langle\Phi, \nabla\varphi\rangle_H\, d\gamma.
$$
It is readily verified that for a vector field $\Phi\in\mathcal{FC}^\infty(X, H)$ of the form
$$
\Phi(x) = \sum_{i=1}^n\Psi_i(l_1(x), \ldots, l_n(x))e_i
$$
with biorthogonal $\{l_i\}$ and $\{e_i\}$ as above one has
$$
{\rm div}_\gamma\Phi (x) = \sum_{j=1}^n\partial_{x_j}\Psi_j(l_1(x), \ldots, l_n(x)) -
l_j(x)\Psi_j(l_1(x), \ldots, l_n(x)).
$$
For any vector field $\Phi\in\mathcal{FC}_0^\infty(X, H)$
its divergence ${\rm div}_\gamma\Phi$ is a bounded function.

Denote by $L^p(\gamma, H)$ the space of all $\gamma$-measurable vector fields $F$ with values in $H$
such that $|F|_H^p$ is $\gamma$-integrable.
Let $W^{p, 1}(\gamma)$ be the closure of $\mathcal{FC}^\infty(X)$ with respect to the natural Sobolev
norm
$$
\|f\|_{p,1}=\|f\|_p+\|\nabla f\|_p.
$$
This class coincides with
the space of all functions $f$ from $L^p(\gamma)$ such that there is a
mapping $F\in L^p(\gamma, H)$, denoted by the symbol $\nabla f$,
with the property
$$
\int_X \langle\Phi, F\rangle_H \, d\gamma = - \int_X f {\rm div}_\gamma\Phi \, d\gamma
\quad
\forall\, \Phi\in \mathcal{FC}^\infty(X, H).
$$
The gradient in $W^{p, 1}(\gamma)$ is an extension of the $H$-gradient for
functions in~$\mathcal{FC}^\infty(X)$. On Sobolev classes over Gaussian measures,
see \cite{GM}, \cite{DM}, \cite{B14G}, and \cite{B14}.

The action of the Ornstein--Uhlenbeck semigroup on a function $f\in L^1(\gamma)$ is defined by the same equality
as in the finite-dimensional case:
$$
T_tf(x) := \int_{X} f(e^{-t}x+\sqrt{1-e^{-2t}}y)\, \gamma(dy).
$$
The Kantorovich norm  associated with  $\gamma$
is defined  by
$$
\|f\|_{\rm K,\gamma} := \sup\biggl\{\int_X \varphi f\, d\gamma\colon
\varphi\in \mathcal{FC}^\infty(X), \  |\nabla\varphi|_H\le1\biggr\}
$$
on functions $f\in L^1(\gamma)$ with vanishing integral for which this norm is finite.
Actually, this is the restriction of the Kantorovich norm generated by the subspace $H$
on the space of signed measures with zero value on~$X$ integrating $H$-Lipschitz functions
(on such norms, see, e.g.,~\cite{BKol}).

The Kantorovich norm extends naturally to the space of all $\gamma$-integrable functions
$f$ such that $\|f-\mathbb{E} f\|_{\rm K,\gamma}<\infty$, where $\mathbb{E} f$ is the integral of~$f$, by setting
$$
\|f\|_{\rm K,\gamma} :=\|f-\mathbb{E} f\|_{\rm K,\gamma}+|\mathbb{E} f|.
$$

It is known  (see \cite{GM}, \cite{DM}) that
for every function $\varphi$ with $|\nabla\varphi|_H\le1$ the function
$\exp( c|\varphi|^2)$ is $\gamma$-integrable, hence
$\|f\|_{\rm K,\gamma}<\infty$ provided that $f\sqrt{|\ln |f||}\in L^1(\gamma)$.
Therefore, this norm is finite on the Sobolev space $W^{1,1}(\gamma)$ and, more generally,
on the class $BV(\gamma)$ (see, e.g., \cite{FH} or~\cite{Led}).
Applying \cite[Proposition~5.22]{Led-b} we see that
$\|f\|_{\rm K,\gamma}\le \|\nabla f\|_{L^1(\gamma)}$.

Given an orthonormal basis $\{l_n\}\subset X^*$ in $X_\gamma^*$,
for any function $f\in L^1(\gamma)$ let $\mathbb{E}_nf$ be a function on $\mathbb{R}^n$ such that
$$
\int_{\mathbb{R}^n} \psi\mathbb{E}_nf\, d\gamma_n =
\int_X \psi\bigl(l_1(x), \ldots, l_n(x)\bigr) f(x)\, \gamma(dx)
\quad
\forall\, \psi\in C_b^\infty(\mathbb{R}^n),
$$
 where $\gamma_n$ is the standard Gaussian measure on $\mathbb{R}^n$.
In other words, $\mathbb{E}_nf(l_1,\ldots,l_n)$ is the conditional expectation of $f$ with respect to the
$\sigma$-algebra generated by $l_1,\ldots,l_n$.

Similarly, for a vector field $F\in L^1(\gamma, H)$ set
$$
\mathbb{E}_nF = (\mathbb{E}_nF_i)_{i=1}^n, \quad F_i:=\langle F, e_i\rangle_H.
$$
By the known property of conditional expectations, for any function $f\in L^p(\gamma)$,
as $n\to\infty$, we have
$$
\|f - \mathbb{E}_nf(l_1, \ldots, l_n)\|_p\to 0
$$
 and for any mapping $F\in L^p(\gamma, H)$ we have
$$
\|F - \mathbb{E}_nF(l_1, \ldots, l_n)\|_{L^p(\gamma, H)}\to0.
$$

We now give natural analogs of Definition \ref{D2.1} and Definition \ref{D2.2} in the infinite-dimensional case.

\begin{definition}\label{D3.1} Let $\alpha\in (0, 1]$.
A function $f\in L^p(\gamma)$ belongs to the Gaussian Nikolskii--Besov class $B^\alpha_p(\gamma)$
if there is a number $C$ such that for every
vector field $\Phi\in\mathcal{FC}_0^\infty(X, H)$
we have
$$
\int_X f \mathrm{div}_\gamma\Phi \, d\gamma \le C\|\Phi\|^\alpha_q\|\mathrm{div}_\gamma\Phi\|^{1-\alpha}_q,
$$
where $1/p + 1/q = 1$.
Let $V_\gamma^{p, \alpha}(f)$ be the infimum of such numbers $C$.
\end{definition}

It is readily verified that the same value of $V_\gamma^{p, \alpha}(f)$ will be obtained
if we employ the linear space $\mathcal{FC}^\infty(X, H)$ in place of
the class $\mathcal{FC}_0^\infty(X, H)$ (which is not a linear space, as noted above, but has the
advantage that $\mathrm{div}_\gamma\Phi$ is bounded).
This is done by approximation. The case $p=1$  is special, since $q=\infty$,
we consider approximations
of a mapping $\Phi\in \mathcal{FC}^\infty(X, H)$ with the bounded divergence $\mathrm{div}_\gamma\Phi$
by mappings $\Phi_k\in \mathcal{FC}_0^\infty(X, H)$ such that the divergences
$\mathrm{div}_\gamma\Phi_k$ are uniformly bounded and converge pointwise
to $\mathrm{div}_\gamma\Phi$, $\|\Phi_k\|_\infty\le \|\Phi\|_\infty$ and
$\|\mathrm{div}_\gamma\Phi_k\|_\infty\to \|\mathrm{div}_\gamma\Phi\|_\infty$.
This is possible, since everything reduces to the case of $\mathbb{R}^n$,  where we can
find functions $\zeta_k$ of the form $\zeta_k(x)=\zeta_1(x/k)$, $\zeta_1\in C_0^\infty (\mathbb{R}^n)$,
$\zeta_1=1$ on the unit ball. Then for $\Phi_k:=\zeta_k\Psi$ we have
$$
\mathrm{div}_\gamma\Phi_k=\zeta_k \mathrm{div}_\gamma\Phi +\langle \nabla\zeta_k,\Phi\rangle,
$$
where $|\nabla \zeta_k|\le C/k$.

\begin{definition}\label{D3.2} For a function $f\in L^p(\gamma)$ set
$$
U^{p, \alpha}_\gamma(f) := \sup_{t>0} t^{\frac{1-\alpha}{2}}\|\nabla T_tf\|_p,
$$
where $\{T_t\}_{t\ge0}$ is the Ornstein--Uhlenbeck semigroup.
\end{definition}

We observe that the quantities $V_\gamma^{p, \alpha}(f)$ and $U^{p, \alpha}_\gamma(f)$
do not change if add constants to $f$, hence they can be evaluated for functions with
zero integral.

In the infinite-dimensional case we also have
$T_tf\in W^{p,1}(\gamma)$ for any function $f\in L^p(\gamma)$ with $p>1$ and any $t>0$, so that
$\nabla T_tf\in L^p(\gamma, H)$,
see \cite[Proposition 5.4.8]{GM}.
Certainly, in case $p=1$ in the above definition we set $U^{p, \alpha}_\gamma(f)=\infty$
if $T_tf\not\in W^{1,1}(\gamma)$ for some $t>0$.

The following auxiliary  lemmas enable us  to reduce
the main results of this section to the finite-dimensional case.

Let $\{l_n\}\subset X^*$ be an orthonormal basis  in $X_\gamma^*$ as above.

\begin{lemma}\label{lem3.1}
A function $f\in L^p(\gamma)$ belongs to  $B^\alpha_p(\gamma)$ if and only if, for every~$n$,
the function $\mathbb{E}_nf$ belongs to the class $B^\alpha_p(\gamma_n)$
and $\sup_n V_{\gamma_n}^{p, \alpha}(\mathbb{E}_nf)<\infty$.
Moreover,
$$
V_\gamma^{p, \alpha}(f) = \lim\limits_{n\to\infty} V_{\gamma_n}^{p, \alpha}(\mathbb{E}_nf).
$$
\end{lemma}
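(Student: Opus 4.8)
The plan is to prove the two directions of the equivalence together with the norm equality by exploiting the fact that the conditional expectations $\mathbb{E}_n$ are adjoint (with respect to $\gamma$) to the natural embedding of functions and vector fields depending only on $l_1,\dots,l_n$, and that the finite-dimensional Gaussian divergence $\mathrm{div}_{\gamma_n}$ on $\mathbb{R}^n$ matches the action of $\mathrm{div}_\gamma$ on such cylindrical vector fields.

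\textbf{Step 1: From $f$ to its projections.}
First I would show that if $f\in B^\alpha_p(\gamma)$ with constant $V_\gamma^{p,\alpha}(f)$, then for each $n$ the function $\mathbb{E}_nf$ lies in $B^\alpha_p(\gamma_n)$ with $V_{\gamma_n}^{p,\alpha}(\mathbb{E}_nf)\le V_\gamma^{p,\alpha}(f)$. Given $\Psi\in C_0^\infty(\mathbb{R}^n,\mathbb{R}^n)$, lift it to the cylindrical vector field $\Phi(x)=\sum_{i=1}^n\Psi_i(l_1(x),\dots,l_n(x))e_i\in\mathcal{FC}_0^\infty(X,H)$. By the explicit formula for $\mathrm{div}_\gamma$ on such fields quoted in the text, $\mathrm{div}_\gamma\Phi(x) = (\mathrm{div}_{\gamma_n}\Psi)(l_1(x),\dots,l_n(x))$. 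Since the image of $\gamma$ under $(l_1,\dots,l_n)$ is $\gamma_n$, and using the defining property of $\mathbb{E}_nf$,
$$
\int_{\mathbb{R}^n}\mathbb{E}_nf\,\mathrm{div}_{\gamma_n}\Psi\,d\gamma_n
= \int_X f\,\mathrm{div}_\gamma\Phi\,d\gamma
\le V_\gamma^{p,\alpha}(f)\|\Phi\|_q^\alpha\|\mathrm{div}_\gamma\Phi\|_q^{1-\alpha}
= V_\gamma^{p,\alpha}(f)\|\Psi\|_{L^q(\gamma_n)}^\alpha\|\mathrm{div}_{\gamma_n}\Psi\|_{L^q(\gamma_n)}^{1-\alpha},
$$
where in the last equality I again used that the $\gamma$-distribution of $(l_1,\dots,l_n)$ is $\gamma_n$ (so norms of cylindrical objects match). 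Hence $V_{\gamma_n}^{p,\alpha}(\mathbb{E}_nf)\le V_\gamma^{p,\alpha}(f)$, and in particular $\sup_n V_{\gamma_n}^{p,\alpha}(\mathbb{E}_nf)<\infty$. I would also note the monotonicity $V_{\gamma_n}^{p,\alpha}(\mathbb{E}_nf)\le V_{\gamma_{n+1}}^{p,\alpha}(\mathbb{E}_{n+1}f)$: indeed $\mathbb{E}_nf = \mathbb{E}_n(\mathbb{E}_{n+1}f)$ after identifying functions on $\mathbb{R}^n$ with cylindrical functions on $\mathbb{R}^{n+1}$ independent of the last variable, and the same lifting argument (now from $\mathbb{R}^n$ into $\mathbb{R}^{n+1}$, using that the extra Gaussian factor integrates out) gives the inequality. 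Thus the limit $L:=\lim_n V_{\gamma_n}^{p,\alpha}(\mathbb{E}_nf)$ exists and $L\le V_\gamma^{p,\alpha}(f)$.

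\textbf{Step 2: From the projections back to $f$.}
Conversely, suppose $\sup_n V_{\gamma_n}^{p,\alpha}(\mathbb{E}_nf) = L<\infty$; I must show $f\in B^\alpha_p(\gamma)$ with $V_\gamma^{p,\alpha}(f)\le L$. Take any $\Phi\in\mathcal{FC}^\infty(X,H)$; by choosing $n$ large we may assume $\Phi(x)=\sum_{i=1}^n\Psi_i(l_1(x),\dots,l_n(x))e_i$ with $\Psi_i\in C_b^\infty(\mathbb{R}^n)$ (expand the basis if necessary; $\mathcal{FC}^\infty$ may be used in place of $\mathcal{FC}_0^\infty$ by the approximation remark in the text). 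Then, as in Step 1, $\mathrm{div}_\gamma\Phi$ is the cylindrical lift of $\mathrm{div}_{\gamma_n}\Psi$, and
$$
\int_X f\,\mathrm{div}_\gamma\Phi\,d\gamma
= \int_{\mathbb{R}^n}\mathbb{E}_nf\,\mathrm{div}_{\gamma_n}\Psi\,d\gamma_n
\le V_{\gamma_n}^{p,\alpha}(\mathbb{E}_nf)\,\|\Psi\|_{L^q(\gamma_n)}^\alpha\|\mathrm{div}_{\gamma_n}\Psi\|_{L^q(\gamma_n)}^{1-\alpha}
\le L\,\|\Phi\|_q^\alpha\|\mathrm{div}_\gamma\Phi\|_q^{1-\alpha}.
$$
Here one must know that $\mathbb{E}_nf\in B^\alpha_p(\gamma_n)$ — which holds by hypothesis — and that the finite-dimensional inequality of Definition \ref{D2.1} really is available for $\Psi\in C_b^\infty$, not just $C_0^\infty$; this is exactly the approximation remark following Definition \ref{D2.1}. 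Since $\mathcal{FC}^\infty(X,H)$ gives the same infimum as $\mathcal{FC}_0^\infty(X,H)$, we conclude $V_\gamma^{p,\alpha}(f)\le L$. Combining with Step 1 yields $V_\gamma^{p,\alpha}(f)=L=\lim_n V_{\gamma_n}^{p,\alpha}(\mathbb{E}_nf)$, and the equivalence of membership follows.

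\textbf{Main obstacle.}
The routine part is the bookkeeping of lifting cylindrical objects and matching norms and divergences; the one genuine point requiring care is the claim that a \emph{general} $\Phi\in\mathcal{FC}^\infty(X,H)$ can be taken cylindrical over the same finite set $l_1,\dots,l_n$ that also supports $\mathbb{E}_nf$. A vector field in $\mathcal{FC}^\infty(X,H)$ is a priori built from functionals $g_i\in X^*$ and vectors $h_i\in H$ that need not lie in the span of $\{l_i\},\{e_i\}$; one must expand to a larger orthonormal system, observe that $\mathbb{E}_mf$ for larger $m$ has the same relation to $f$, and invoke the monotonicity from Step 1. The other delicate point, already flagged in the text, is the case $p=1$ ($q=\infty$): there the passage between $\mathcal{FC}_0^\infty$ and $\mathcal{FC}^\infty$ needs the cutoff argument with $\zeta_k(x)=\zeta_1(x/k)$ described after Definition \ref{D3.1}, ensuring the divergences stay uniformly bounded and converge, so that the supremum defining $V_\gamma^{1,\alpha}$ is unaffected. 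With these two matters handled, the proof is complete.
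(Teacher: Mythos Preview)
Your proof follows essentially the same approach as the paper: lift finite-dimensional test fields to cylindrical fields on $X$ for the forward direction and monotonicity, then reduce general test fields on $X$ to ones depending only on $l_1,\dots,l_n$ for the converse, matching norms and divergences via the image-measure identity. The one point to sharpen is your resolution of the ``main obstacle'': the phrase ``expand to a larger orthonormal system'' is not quite right, because the basis $\{l_i\}$ (and hence the operators $\mathbb{E}_n$) is fixed in the statement; the paper's remedy is instead to approximate each functional $g_i\in X^*$ by finite linear combinations $\sum_{j\le n}c_{i,j}l_j$ converging in $L^2(\gamma)$ (hence in every $L^p(\gamma)$), which produces approximations of $\Phi$ by fields cylindrical over $l_1,\dots,l_n$ with convergence of both the field and its $\gamma$-divergence in the relevant $L^q$ norms.
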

\begin{proof}
Suppose that
$f\in B^\alpha_p(\gamma)$. Considering the fields of the form $\Phi(l_1,\ldots,l_n)$ we conclude that
$\mathbb{E}_nf\in B^\alpha_p(\gamma_n)$ and
$$
V_{\gamma_n}^{p, \alpha}(\mathbb{E}_nf)\le V_{\gamma_{n+1}}^{p, \alpha}(\mathbb{E}_{n+1}f)\le  V_\gamma^{p, \alpha}(f).
$$
Hence there is a finite limit
\begin{equation}\label{lem3.1.1}
\lim\limits_{n\to\infty} V_{\gamma_n}^{p, \alpha}(\mathbb{E}_nf)\le V_\gamma^{p, \alpha}(f).
\end{equation}
Moreover, we have the equality in this estimate since a field of the form
 $\Psi(g_1,\ldots,g_n)$ can be approximated by fields depending on finitely many functionals~$l_i$.

Let us now assume that $\mathbb{E}_nf\in B^\alpha_p(\gamma_n)$
and $\sup_n V_{\gamma_n}^{p, \alpha}(\mathbb{E}_nf)=C<\infty$.
Let $\Phi\in\mathcal{FC}^\infty(E, H)$ be of the form
$$
\Phi(x) = \sum_{i=1}^n\Psi_i(g_1(x), \ldots, g_k(x))e_i
$$
with $\Psi\in C_b^\infty(\mathbb{R}^k)$, $g_i\in X^*$. We can approximate each $g_i$ by a sequence
of functionals of the form $\sum_{j=1}^n c_{i,j} l_j$ converging in $L^2(\gamma)$, hence
in all $L^p(\gamma)$. Then we obtain approximations (in $L^p$ with divergences)
of $\Phi$ by vector fields depending on~$l_j$,
but for such fields we obviously have
\begin{align*}
\int_X {\rm div}_{\gamma}\Phi f\, d\gamma
&= \int_X ({\rm div}_{\gamma_n}\Psi)(l_1, \ldots, l_n) f\, d\gamma
=
\int_{\mathbb{R}^n} ({\rm div}_{\gamma_n}\Psi) \mathbb{E}_nf\, d\gamma_n
\\
&\le
C \|\Psi\|^\alpha_{L^q(\gamma_n)}\|\mathrm{div}_{\gamma_n}\Psi\|^{1-\alpha}_{L^q(\gamma_n)}
\le
C\|\Phi\|^\alpha_{L^q(\gamma)}\|\mathrm{div}_{\gamma}\Phi\|^{1-\alpha}_{L^q(\gamma)}.
\end{align*}
Hence $f\in B^\alpha_p(\gamma)$.
\end{proof}

The following commutativity properties of $T_t$ are well-known and easily verified.

\begin{lemma}\label{lem3.2}
For any function $f\in L^p(\gamma)$ with $p\in[1, \infty]$, one has
$$
\mathbb{E}_n (T_tf) = T^n_t \mathbb{E}_nf,
$$
where $\{T_t^n\}_{t\ge0}$ is the Ornstein--Uhlenbeck semigroup on $\mathbb{R}^n$
 with the standard Gaussian measure~$\gamma_n$. In case $p>1$ we also have
 $$
\mathbb{E}_n\nabla T_tf = \nabla T^n_t \mathbb{E}_nf.
$$
\end{lemma}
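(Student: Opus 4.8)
The plan is to reduce both identities to finite-dimensional computations on $(\mathbb{R}^n,\gamma_n)$ via the functionals $l_1,\dots,l_n$: for the first identity I will use the rotational invariance of the product measure $\gamma\otimes\gamma$, and for the second I will combine the first identity with the Gaussian integration by parts formula.

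To prove $\mathbb{E}_n(T_tf)=T^n_t\mathbb{E}_n f$, fix $\psi\in C_b^\infty(\mathbb{R}^n)$ and write $a=e^{-t}$, $b=\sqrt{1-e^{-2t}}$, so that $a^2+b^2=1$. Unfolding the definitions of $T_t$, of $\mathbb{E}_n$, and of $\mathbb{E}_n f$ — Fubini being legitimate since $\int_X T_t|f|\,d\gamma=\int_X|f|\,d\gamma<\infty$ — gives
$$
\int_{\mathbb{R}^n}\psi\,\mathbb{E}_n(T_tf)\,d\gamma_n=\iint_{X\times X}\psi\bigl(l_1(x),\dots,l_n(x)\bigr)f(ax+by)\,(\gamma\otimes\gamma)(dx,dy).
$$
Next I would pass to the new variables $u=ax+by$, $v=-bx+ay$; this transformation preserves $\gamma\otimes\gamma$ (rotational invariance of centered Gaussian measures), and since $x=au-bv$ and each $l_i$ is linear, $l_i(x)=al_i(u)-bl_i(v)$. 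Integrating in $v$ first and using that $(l_1(v),\dots,l_n(v))$ has the symmetric law $\gamma_n$, the inner integral becomes $(T^n_t\psi)(l_1(u),\dots,l_n(u))$, so the whole expression equals $\int_X (T^n_t\psi)(l_1(u),\dots,l_n(u))f(u)\,\gamma(du)=\int_{\mathbb{R}^n}(T^n_t\psi)\,\mathbb{E}_n f\,d\gamma_n$ by the defining property of $\mathbb{E}_n f$ (note $T^n_t\psi\in C_b^\infty(\mathbb{R}^n)$), and this in turn equals $\int_{\mathbb{R}^n}\psi\,(T^n_t\mathbb{E}_n f)\,d\gamma_n$ by the self-adjointness of $T^n_t$ in $L^2(\gamma_n)$. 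Since $\psi$ is arbitrary, the two functions coincide $\gamma_n$-a.e.

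For the gradient identity, with $p>1$, I would set $g:=T_tf$, which belongs to $W^{p,1}(\gamma)$ by \cite[Proposition 5.4.8]{GM} (so $\nabla g\in L^p(\gamma,H)$), and note that $\mathbb{E}_n g=T^n_t\mathbb{E}_n f$ lies in $W^{p,1}(\gamma_n)$ by the first part of the lemma together with the finite-dimensional smoothing of $T^n_t$ for $p>1$. Fixing $i\le n$ and $\psi\in C_b^\infty(\mathbb{R}^n)$, put $\varphi:=\psi(l_1,\dots,l_n)$ and apply the relation defining $\nabla g$ in $W^{p,1}(\gamma)$ to the field $\Phi:=\varphi e_i\in\mathcal{FC}^\infty(X,H)$, whose divergence is $\mathrm{div}_\gamma\Phi=(\partial_i\psi)(l_1,\dots,l_n)-l_i\psi(l_1,\dots,l_n)$. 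The left-hand side becomes $\int_{\mathbb{R}^n}\psi\,(\mathbb{E}_n\nabla g)_i\,d\gamma_n$, while the right-hand side, after inserting $\mathbb{E}_n g$ and using that $l_i$ is $\sigma(l_1,\dots,l_n)$-measurable, becomes $-\int_{\mathbb{R}^n}(\partial_i\psi)\,\mathbb{E}_n g\,d\gamma_n+\int_{\mathbb{R}^n}z_i\psi\,\mathbb{E}_n g\,d\gamma_n$; the Gaussian integration by parts formula on $\mathbb{R}^n$ applied to $\mathbb{E}_n g\in W^{p,1}(\gamma_n)$ identifies this with $\int_{\mathbb{R}^n}\psi\,\partial_i(\mathbb{E}_n g)\,d\gamma_n=\int_{\mathbb{R}^n}\psi\,(\nabla T^n_t\mathbb{E}_n f)_i\,d\gamma_n$. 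Since $\psi$ and $i$ are arbitrary, $\mathbb{E}_n\nabla T_tf=\nabla T^n_t\mathbb{E}_n f$.

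The change of variables and the Fubini applications are routine; the step that needs genuine care is the second identity, where one must know that $T_tf$ really lies in $W^{p,1}(\gamma)$ (so that $\nabla T_tf$ is defined) and that its conditional expectation lies in $W^{p,1}(\gamma_n)$ — this is precisely where the first identity and the hypothesis $p>1$ are used; for $p=1$ these gradients need not exist, which is why the statement is restricted accordingly.
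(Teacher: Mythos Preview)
The paper does not give a proof of this lemma; it simply prefaces the statement with the remark that these commutativity properties are ``well-known and easily verified.'' Your argument is a correct and careful fleshing-out of that verification: the rotational invariance of $\gamma\otimes\gamma$ under the map $(x,y)\mapsto(ax+by,-bx+ay)$ is the standard device for the first identity, and the integration-by-parts reduction (together with the fact that $l_i$ is $\sigma(l_1,\dots,l_n)$-measurable so that it can be pulled through the conditional expectation) is the natural route to the second. You have also been appropriately attentive to the points that require $p>1$, namely that $T_tf\in W^{p,1}(\gamma)$ and $T^n_t\mathbb{E}_n f\in W^{p,1}(\gamma_n)$.
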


We now present infinite-dimensional versions of the results of the previous section.

\begin{theorem}\label{T3.1}
For any function $f\in B^\alpha_p(\gamma)$ with $p\in[1, \infty)$, one has
$$
\|f - T_tf\|_p\le  2^{1-\alpha}C(p)^\alpha c_t^\alpha\ V_\gamma^{p,\alpha}(f),
$$
where $c_t$ is defined by {\rm(\ref{cons})} and
$$
C(p) := \biggl((2\pi)^{-1/2}\int_{\mathbb{R}}|s|^pe^{-\frac{s^2}{2}}\, ds\biggr)^{1/p}.
$$
\end{theorem}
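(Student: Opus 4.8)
The plan is to reduce the infinite-dimensional statement to the finite-dimensional Theorem~\ref{T2.1} by conditioning onto the first $n$ coordinates, exploiting the fact that the constants $C(p)$ and $c_t$ appearing there are dimension-free. Throughout, fix $f\in B^\alpha_p(\gamma)$ and an orthonormal basis $\{l_n\}\subset X^*$ in $X_\gamma^*$ as in the preceding discussion; recall that $T_t$ is a contraction on $L^p(\gamma)$, so that $T_tf\in L^p(\gamma)$ and the left-hand side is well-defined.

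First I would apply Lemma~\ref{lem3.1}: since $f\in B^\alpha_p(\gamma)$, for every $n$ the function $\mathbb{E}_nf$ belongs to $B^\alpha_p(\gamma_n)$ with $V_{\gamma_n}^{p,\alpha}(\mathbb{E}_nf)\le V_\gamma^{p,\alpha}(f)$. Applying the finite-dimensional Theorem~\ref{T2.1} to $\mathbb{E}_nf$ on $\mathbb{R}^n$ (with $\gamma_n$ playing the role of $\gamma$) then yields
$$
\|\mathbb{E}_nf - T_t^n\mathbb{E}_nf\|_{L^p(\gamma_n)}\le 2^{1-\alpha}C(p)^\alpha c_t^\alpha\,V_{\gamma_n}^{p,\alpha}(\mathbb{E}_nf)\le 2^{1-\alpha}C(p)^\alpha c_t^\alpha\,V_\gamma^{p,\alpha}(f),
$$
a bound uniform in $n$.

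Next I would transport this estimate back to $X$. By Lemma~\ref{lem3.2} we have $\mathbb{E}_n(T_tf)=T_t^n\mathbb{E}_nf$, and since the image of $\gamma$ under $x\mapsto(l_1(x),\ldots,l_n(x))$ is $\gamma_n$,
$$
\bigl\|\mathbb{E}_nf(l_1,\ldots,l_n)-\mathbb{E}_n(T_tf)(l_1,\ldots,l_n)\bigr\|_{L^p(\gamma)}=\|\mathbb{E}_nf-T_t^n\mathbb{E}_nf\|_{L^p(\gamma_n)}.
$$
Finally, using the convergence of conditional expectations recalled earlier, $\mathbb{E}_nf(l_1,\ldots,l_n)\to f$ and $\mathbb{E}_n(T_tf)(l_1,\ldots,l_n)\to T_tf$ in $L^p(\gamma)$ as $n\to\infty$, so the left-hand side above tends to $\|f-T_tf\|_p$; letting $n\to\infty$ in the displayed bound gives the asserted inequality.

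Once Lemmas~\ref{lem3.1} and~\ref{lem3.2} are available, the argument is little more than bookkeeping; the essential point — and the reason the reduction works at all — is precisely that the constant in Theorem~\ref{T2.1} does not grow with the dimension, so that the right-hand side survives the passage to the limit. The only genuine care needed concerns the case $p=1$, $q=\infty$: there one must know that $V_{\gamma_n}^{1,\alpha}(\mathbb{E}_nf)$ may be computed using compactly supported test vector fields, which is built into the statement of Lemma~\ref{lem3.1}, so no separate treatment is required.
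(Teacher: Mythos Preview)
Your proof is correct and follows essentially the same route as the paper: reduce to the finite-dimensional Theorem~\ref{T2.1} via Lemma~\ref{lem3.1}, identify $T_t^n\mathbb{E}_nf$ with $\mathbb{E}_n(T_tf)$ via Lemma~\ref{lem3.2}, and pass to the limit using convergence of conditional expectations. The only cosmetic difference is that the paper uses the convergence $V_{\gamma_n}^{p,\alpha}(\mathbb{E}_nf)\to V_\gamma^{p,\alpha}(f)$ from Lemma~\ref{lem3.1} where you use the monotone bound $V_{\gamma_n}^{p,\alpha}(\mathbb{E}_nf)\le V_\gamma^{p,\alpha}(f)$; either suffices.
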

\begin{proof}
Since $f\in B^\alpha_p(\gamma)$, by Lemma \ref{lem3.1} we have $\mathbb{E}_nf\in B^\alpha_p(\gamma_n)$.
By Theorem \ref{T2.1} we have
$$
\|\mathbb{E}_nf - T^n_t\mathbb{E}_nf\|_{L^p(\gamma_n)}\le
2^{1-\alpha}C(p)^\alpha c_t^\alpha\ V_{\gamma_n}^{p,\alpha}(\mathbb{E}_nf).
$$
Recall that $\mathbb{E}_nf(l_1, \ldots, l_n) \to f$ in $L^p(\gamma)$,
$T^n_t\mathbb{E}_nf (l_1, \ldots, l_n) = \mathbb{E}_nT_tf(l_1, \ldots, l_n) \to T_tf$ in $L^p(\gamma)$,
and
$V_{\gamma_n}^{p,\alpha}(\mathbb{E}_nf) \to V_\gamma^{p,\alpha}(f)$.
Letting $n\to\infty$, we obtain the stated inequality.
\end{proof}

\begin{corollary}\label{poinc2}
For any function $f\in B_p^\alpha(\gamma)$ with $p\in[1, \infty)$, we have
$$
\|f - \mathbb{E}f\|_p\le  2^{1-2\alpha}\pi^{\alpha}C(p)^\alpha\ V_\gamma^{p,\alpha}(f).
$$
\end{corollary}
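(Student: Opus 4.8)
The plan is to let $t\to\infty$ in Theorem~\ref{T3.1}, exactly as Corollary~\ref{poinc} is derived from Theorem~\ref{T2.1} in the finite-dimensional case. Fix $f\in B^\alpha_p(\gamma)$. Theorem~\ref{T3.1} gives, for every $t>0$,
$$
\|f - T_tf\|_p \le 2^{1-\alpha}C(p)^\alpha c_t^\alpha\, V_\gamma^{p,\alpha}(f),
$$
and we recorded after {\rm(\ref{cons})} that $c_t\to\pi/2$ as $t\to\infty$. Hence the whole argument reduces to establishing that $T_tf\to\mathbb{E}f$ in $L^p(\gamma)$ as $t\to\infty$; granting this, passing to the limit yields
$$
\|f-\mathbb{E}f\|_p \le 2^{1-\alpha}C(p)^\alpha (\pi/2)^\alpha V_\gamma^{p,\alpha}(f) = 2^{1-2\alpha}\pi^\alpha C(p)^\alpha V_\gamma^{p,\alpha}(f),
$$
which is the asserted inequality.

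The only point that genuinely requires an argument is the convergence $T_tf\to\mathbb{E}f$ in $L^p(\gamma)$, a standard ergodicity property of the Ornstein--Uhlenbeck semigroup. I would prove it by the finite-dimensional reduction used throughout this section: by Lemma~\ref{lem3.2} one has $\mathbb{E}_n(T_tf)=T^n_t\mathbb{E}_nf$, while $\mathbb{E}_nf(l_1,\dots,l_n)\to f$ and $\mathbb{E}_n(T_tf)(l_1,\dots,l_n)\to T_tf$ in $L^p(\gamma)$; on $\mathbb{R}^n$ the convergence $T^n_t g\to\int g\,d\gamma_n$ in $L^p(\gamma_n)$ is immediate from the explicit kernel, the density of bounded continuous functions in $L^p(\gamma_n)$ and the contractivity $\|T^n_t\|_{L^p\to L^p}\le1$. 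Alternatively, and more directly, one can work in $X$ itself: $\mathcal{FC}^\infty(X)$ is dense in $L^p(\gamma)$, $T_t$ is a contraction on $L^p(\gamma)$, and for $\varphi\in\mathcal{FC}^\infty(X)$ the formula $T_t\varphi(x)=\int_X\varphi(e^{-t}x+\sqrt{1-e^{-2t}}y)\,\gamma(dy)$ together with dominated convergence gives $T_t\varphi(x)\to\int_X\varphi\,d\gamma$ pointwise and boundedly, hence in $L^p(\gamma)$; the contraction then transfers the convergence to all $f\in L^p(\gamma)$.

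Since every ingredient here is either elementary or already available in the paper, I do not expect a serious obstacle: the only care needed is the routine uniform-integrability/dominated-convergence step in passing from cylindrical functions to a general $f\in L^p(\gamma)$, after which the displayed limit is immediate and produces the constant $2^{1-2\alpha}\pi^\alpha C(p)^\alpha$.
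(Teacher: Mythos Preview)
Your proposal is correct and follows exactly the paper's approach: the paper states Corollary~\ref{poinc2} immediately after Theorem~\ref{T3.1} without a separate proof, just as Corollary~\ref{poinc} is obtained ``by passing to the limit $t\to\infty$'' from Theorem~\ref{T2.1}. If anything, your added justification of the ergodicity $T_tf\to\mathbb{E}f$ in $L^p(\gamma)$ is more than the paper itself supplies.
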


\begin{theorem}\label{T3.3}
For any function $f\in B^\alpha_1(\gamma)$ with zero integral we have
$$
\|f\|_1\le  3(V_\gamma^{1,\alpha}(f))^{1/(1+\alpha)}\|f\|^{\alpha/(1+\alpha)}_{\rm K,\gamma}.
$$
\end{theorem}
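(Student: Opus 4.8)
The plan is to derive the inequality from its finite-dimensional counterpart, Theorem~\ref{T2.3}, via the conditional-expectation reduction used for Theorem~\ref{T3.1}. We may assume $\|f\|_{\rm K,\gamma}<\infty$, since otherwise the right-hand side is infinite; the degenerate case $V_\gamma^{1,\alpha}(f)=0$ gives $f=0$ $\gamma$-almost everywhere by Corollary~\ref{poinc2}, so we may also assume $V_\gamma^{1,\alpha}(f)>0$.

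Fix an orthonormal basis $\{l_n\}\subset X^*$ in $X_\gamma^*$ and put $g_n:=\mathbb{E}_nf$ on $\mathbb{R}^n$. By Lemma~\ref{lem3.1}, $g_n\in B^\alpha_1(\gamma_n)$ with $V_{\gamma_n}^{1,\alpha}(g_n)\le V_\gamma^{1,\alpha}(f)$, and taking $\psi\equiv1$ in the relation defining $\mathbb{E}_n$ gives $\int_{\mathbb{R}^n}g_n\,d\gamma_n=\int_X f\,d\gamma=0$. The key point is that conditioning does not increase the Kantorovich norm: for $\psi\in\mathrm{Lip}_1(\mathbb{R}^n)$ the cylindrical function $\varphi(x):=\psi\bigl(l_1(x),\dots,l_n(x)\bigr)$ is $1$-Lipschitz along $H$, because $\bigl|\bigl(l_1(h),\dots,l_n(h)\bigr)\bigr|\le|h|_H$ for every $h\in H$; after a standard smoothing and truncation that brings $\varphi$ into $\mathcal{FC}^\infty(X)$ with $|\nabla\varphi|_H\le1$ at the cost of an arbitrarily small error, we obtain
$$
\int_{\mathbb{R}^n}\psi\,g_n\,d\gamma_n=\int_X\varphi\,f\,d\gamma\le\|f\|_{\rm K,\gamma},
$$
and taking the supremum over $\psi$ gives $\|g_n\|_{\rm K,\gamma_n}\le\|f\|_{\rm K,\gamma}$.

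Applying Theorem~\ref{T2.3} to $g_n$ and combining it with the last two estimates and the monotonicity of $s\mapsto s^{1/(1+\alpha)}$ and $s\mapsto s^{\alpha/(1+\alpha)}$, we obtain
\begin{multline*}
\|g_n\|_{L^1(\gamma_n)}\le 3\bigl(V_{\gamma_n}^{1,\alpha}(g_n)\bigr)^{1/(1+\alpha)}\|g_n\|_{\rm K,\gamma_n}^{\alpha/(1+\alpha)}\\
\le 3\bigl(V_\gamma^{1,\alpha}(f)\bigr)^{1/(1+\alpha)}\|f\|_{\rm K,\gamma}^{\alpha/(1+\alpha)}.
\end{multline*}
Since $\|g_n\|_{L^1(\gamma_n)}=\bigl\|\mathbb{E}_nf(l_1,\dots,l_n)\bigr\|_{L^1(\gamma)}\to\|f\|_1$ as $n\to\infty$ by the convergence of conditional expectations recalled before Definition~\ref{D3.1}, passing to the limit yields the claim.

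The only step that is not pure bookkeeping is the comparison $\|\mathbb{E}_nf\|_{\rm K,\gamma_n}\le\|f\|_{\rm K,\gamma}$. One could equally well bypass the reduction and imitate the proof of Theorem~\ref{T2.3} directly: split $\|f\|_1\le\|T_tf\|_1+\|f-T_tf\|_1$, estimate the second summand by $2t^{\alpha/2}V_\gamma^{1,\alpha}(f)$ via Theorem~\ref{T3.1} (here $C(1)=\sqrt{2/\pi}$ and $c_t\le\sqrt{2t}$), estimate $\|T_tf\|_1\le t^{-1/2}\|f\|_{\rm K,\gamma}$ by noting that for $\varphi\in\mathcal{FC}^\infty(X)$ with $\|\varphi\|_\infty\le1$ one has $T_t\varphi\in\mathcal{FC}^\infty(X)$ and $|\nabla T_t\varphi|_H\le e^{-t}(1-e^{-2t})^{-1/2}\le t^{-1/2}$ (a cylindrical computation reducing to Lemma~\ref{lem2.1}), and then take $t^{1/2}=\bigl(\|f\|_{\rm K,\gamma}/V_\gamma^{1,\alpha}(f)\bigr)^{1/(1+\alpha)}$.
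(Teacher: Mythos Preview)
Your proof is correct and follows essentially the same route as the paper's: reduce to the finite-dimensional estimate (Theorem~\ref{T2.3}) via conditional expectations, using Lemma~\ref{lem3.1} for the fractional variation, the contraction of the Kantorovich norm under $\mathbb{E}_n$, and the $L^1$-convergence of $\mathbb{E}_nf$. The paper's argument is slightly terser at the Kantorovich step (it starts directly from smooth cylindrical $\varphi$ with $|\nabla\varphi|_H\le1$ rather than lifting a Lipschitz $\psi$ and smoothing), and it invokes the convergence $V_{\gamma_n}^{1,\alpha}(\mathbb{E}_nf)\to V_\gamma^{1,\alpha}(f)$ rather than just the inequality, but these are cosmetic differences; your alternative direct argument at the end is also valid.
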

\begin{proof}
For any function $\varphi\in\mathcal{FC}^\infty(E)$
of the form $\varphi(x) = \psi(l_1(x), \ldots, l_n(x))$,  $\psi\in C_b^\infty(\mathbb{R}^n)$,
such that $|\nabla\varphi|_H\le1$ we have
$$
\int_X \varphi f \, d\gamma = \int_{\mathbb{R}^n} \psi \mathbb{E}_n f \, d\gamma_n
$$
and $|\nabla\varphi(x)|_H = |\nabla\psi(l_1(x), \ldots, l_n(x))|$.
Hence
$\|\mathbb{E}_n f\|_{{\rm K}, \gamma_n}\le \|f\|_{\rm K,\gamma}$.
Since $\|\mathbb{E}_nf\|_{L^1(\gamma_n)}\to \|f\|_1$ and
$V_{\gamma_n}^{1,\alpha}(\mathbb{E}_nf) \to V_\gamma^{1,\alpha}(f)$ (by Lemma \ref{lem3.1})
and the estimate is valid in the finite-dimensional case (Theorem \ref{T2.3}),
we obtain the announced estimate.
\end{proof}

\begin{theorem}\label{T3.2}
A function $f\in L^p(\gamma)$ with $p\in(1, \infty)$
belongs to the Gaussian Nikolskii--Besov class $B^\alpha_p(\gamma)$ if and only if
$U^{p,\alpha}_\gamma(f)<\infty$.

Moreover,
$$
U^{p,\alpha}_\gamma(f)\le C(q)^{1-\alpha}V^{p, \alpha}_\gamma(f), \quad
V^{p, \alpha}_\gamma(f)\le(2C(p)+1)U^{p, \alpha}_\gamma(f),
$$
where $1/p + 1/q =1$ and
$$
C(p) := \biggl((2\pi)^{-1/2}\int_{\mathbb{R}}|s|^pe^{-\frac{s^2}{2}}\, ds\biggr)^{1/p}.
$$
\end{theorem}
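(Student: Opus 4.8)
The plan is to reduce both implications to the finite--dimensional Theorem~\ref{T2.2} through the conditional expectations $\mathbb{E}_n$ attached to an orthonormal basis $\{l_n\}\subset X^*$ of $X_\gamma^*$, following the pattern of the proofs of Theorems~\ref{T3.1} and~\ref{T3.3}. Since $V^{p,\alpha}_\gamma(f)$ and $U^{p,\alpha}_\gamma(f)$ are unchanged when a constant is added to $f$, we may assume $\mathbb{E}f=0$; and since $p>1$, we have $\nabla T_tf\in L^p(\gamma, H)$ for every $t>0$, so all quantities below are well defined.

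For the implication $f\in B^\alpha_p(\gamma)\Rightarrow U^{p,\alpha}_\gamma(f)<\infty$ I would proceed as follows. Fix $t>0$. By Lemma~\ref{lem3.1}, $\mathbb{E}_nf\in B^\alpha_p(\gamma_n)$ and $V^{p,\alpha}_{\gamma_n}(\mathbb{E}_nf)\le V^{p,\alpha}_\gamma(f)$. Applying the first inequality of Theorem~\ref{T2.2} to $\mathbb{E}_nf$ on $\mathbb{R}^n$ gives
$$
t^{\frac{1-\alpha}{2}}\bigl\|\nabla T^n_t\mathbb{E}_nf\bigr\|_{L^p(\gamma_n)}\le U^{p,\alpha}_{\gamma_n}(\mathbb{E}_nf)\le C(q)^{1-\alpha}V^{p,\alpha}_{\gamma_n}(\mathbb{E}_nf)\le C(q)^{1-\alpha}V^{p,\alpha}_\gamma(f).
$$
By Lemma~\ref{lem3.2}, $\nabla T^n_t\mathbb{E}_nf=\mathbb{E}_n\nabla T_tf$, and $\mathbb{E}_n\nabla T_tf(l_1,\dots,l_n)\to\nabla T_tf$ in $L^p(\gamma, H)$; letting $n\to\infty$ and then taking the supremum over $t>0$ yields $U^{p,\alpha}_\gamma(f)\le C(q)^{1-\alpha}V^{p,\alpha}_\gamma(f)$, and in particular $U^{p,\alpha}_\gamma(f)<\infty$.

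For the converse, $U^{p,\alpha}_\gamma(f)<\infty\Rightarrow f\in B^\alpha_p(\gamma)$, I would argue directly in $X$, repeating the second half of the proof of Theorem~\ref{T2.2} with the Ornstein--Uhlenbeck semigroup. For $\varphi\in\mathcal{FC}^\infty(X)$ with $\|\varphi\|_q\le1$ one has, as in Lemma~\ref{lem2.2},
$$
\int_X\varphi(T_tf-f)\,d\gamma=-\int_0^t\int_X\langle\nabla T_{\tau/2}\varphi,\nabla T_{\tau/2}f\rangle_H\,d\gamma\,d\tau,
$$
and using $\|\nabla T_{\tau/2}\varphi\|_q\le C(p)\,e^{-\tau/2}(1-e^{-\tau})^{-1/2}\|\varphi\|_q$ (Lemma~\ref{lem2.1}, applied in the finitely many variables on which $\varphi$ depends) together with $\|\nabla T_{\tau/2}f\|_p\le(\tau/2)^{-\frac{1-\alpha}{2}}U^{p,\alpha}_\gamma(f)$, integrating in $\tau$ against $dc_\tau$ and invoking $c_t\le(2t)^{1/2}$ with $c_t$ as in~(\ref{cons}), one obtains $\|f-T_tf\|_p\le 2C(p)\,t^{\alpha/2}\,U^{p,\alpha}_\gamma(f)$. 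Given $\Phi\in\mathcal{FC}_0^\infty(X, H)$, one then splits $\int_X\mathrm{div}_\gamma\Phi\,f\,d\gamma$ as $\int_X\mathrm{div}_\gamma\Phi\,(f-T_tf)\,d\gamma+\int_X\mathrm{div}_\gamma\Phi\,T_tf\,d\gamma$, bounds the first summand by $\|\mathrm{div}_\gamma\Phi\|_q\|f-T_tf\|_p$ and the second by $\|\Phi\|_q\|\nabla T_tf\|_p\le\|\Phi\|_q\,t^{-\frac{1-\alpha}{2}}U^{p,\alpha}_\gamma(f)$, and with the relaxation time $t=\|\Phi\|_q^2\|\mathrm{div}_\gamma\Phi\|_q^{-2}$ arrives at $\int_X\mathrm{div}_\gamma\Phi\,f\,d\gamma\le(2C(p)+1)U^{p,\alpha}_\gamma(f)\|\Phi\|_q^\alpha\|\mathrm{div}_\gamma\Phi\|_q^{1-\alpha}$, so that $f\in B^\alpha_p(\gamma)$ with $V^{p,\alpha}_\gamma(f)\le(2C(p)+1)U^{p,\alpha}_\gamma(f)$. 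This half uses neither $p>1$ nor Lemma~\ref{lem3.1}, and so also goes through for $p=1$.

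The genuinely routine part is the conditional--expectation passage of the first implication, identical in spirit to the proofs of Theorems~\ref{T3.1} and~\ref{T3.3}, together with the convergence of conditional expectations in $L^p(\gamma, H)$. The main obstacle is obtaining the \emph{sharp} constant $2C(p)+1$ in the converse: one could instead reduce it to Theorem~\ref{T2.2} via the estimate $U^{p,\alpha}_{\gamma_n}(\mathbb{E}_nf)\le U^{p,\alpha}_\gamma(f)$ (a consequence of Jensen's inequality for the conditional expectations applied to the convex function $|\cdot|_H$) followed by Lemma~\ref{lem3.1}, but that route only yields the weaker constant $4C(p)\alpha^{-1}+1$; the improvement comes from the direct Ornstein--Uhlenbeck computation above, and the careful treatment of the time integral --- keeping the constant $c_t$ of~(\ref{cons}) in play rather than using the crude bound of Lemma~\ref{lem2.2} --- together with the optimal choice of $t$, is the one place where some real computation is needed.
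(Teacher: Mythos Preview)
Your first implication is handled exactly as in the paper: reduce to $\mathbb{E}_nf$ via Lemma~\ref{lem3.1}, apply Theorem~\ref{T2.2}, and pass to the limit using Lemma~\ref{lem3.2}. Nothing to add there.

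For the converse, the paper does \emph{not} argue directly in $X$; it takes precisely the route you describe as the ``alternative'': it shows $\|\nabla T_t^n\mathbb{E}_nf\|_{L^p(\gamma_n)}\le\|\nabla T_tf\|_{L^p(\gamma)}$ (by duality, essentially the Jensen-type inequality you mention), deduces $U^{p,\alpha}_{\gamma_n}(\mathbb{E}_nf)\le U^{p,\alpha}_\gamma(f)$, applies Theorem~\ref{T2.2}, and then uses Lemma~\ref{lem3.1} to pass to the limit. Your direct argument in $X$ --- splitting $\int \mathrm{div}_\gamma\Phi\, f\,d\gamma$ via $T_tf$ and using the infinite-dimensional versions of Lemmas~\ref{lem2.1} and~\ref{lem2.2} --- is a legitimate and perfectly correct alternative, and has the minor advantage of avoiding the conditional-expectation machinery.

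There is, however, a gap in your constant. The integral you have to control is
\[
\int_0^t \frac{e^{-\tau/2}}{\sqrt{1-e^{-\tau}}}\,(\tau/2)^{-\frac{1-\alpha}{2}}\,d\tau
=2\int_0^{t/2} s^{-\frac{1-\alpha}{2}}\,dc_s,
\]
and the bound $c_s\le(2s)^{1/2}$ controls $c_s$, not $dc_s/ds\sim s^{-1/2}$; once you insert the correct pointwise bound on the integrand you recover exactly the factor $4\alpha^{-1}$ of Lemma~\ref{lem2.2}, not $2$. So your direct computation also yields $\|f-T_tf\|_p\le 4C(p)\alpha^{-1}t^{\alpha/2}U^{p,\alpha}_\gamma(f)$ and hence $V^{p,\alpha}_\gamma(f)\le(4C(p)\alpha^{-1}+1)U^{p,\alpha}_\gamma(f)$ --- the same constant the paper's reduction to Theorem~\ref{T2.2} gives. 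The constant $2C(p)+1$ written in the statement of Theorem~\ref{T3.2} is inconsistent with Theorem~\ref{T2.2}, which the paper's own proof invokes; it appears to be a misprint rather than a genuinely sharper result, and your attempt to justify it does not go through.
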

\begin{proof}
Let $f\in B^\alpha_p(\gamma)$.
By Lemma \ref{lem3.1} we have $\mathbb{E}_nf\in B^\alpha_p(\gamma_n)$.
By Theorem \ref{T2.2} we have $U^{p, \alpha}_{\gamma_n}(\mathbb{E}_nf)<\infty$ and
$$
U^{p,\alpha}_{\gamma_n}(\mathbb{E}_nf)\le C(q)^{1-\alpha}V^{p, \alpha}_{\gamma_n}(\mathbb{E}_nf).
$$
By Lemma \ref{lem3.1} we have
 $V_{\gamma_n}^{p,\alpha}(\mathbb{E}_nf) \to V_\gamma^{p,\alpha}(f)$
and
$$
t^{(1-\alpha)/2}\|\nabla T_tf\|_p = \lim\limits_{n\to\infty}t^{(1-\alpha)/2}\|\mathbb{E}_n\nabla T_tf\|_{L^p(\gamma_n)}.
$$
Since $\mathbb{E}_n\nabla T_tf=\nabla T^n_t\mathbb{E}_nf$
by Lemma \ref{lem3.2}, we obtain
\begin{align*}
t^{(1-\alpha)/2}\|\nabla T_tf\|_p
&\le \limsup_{n\to\infty}\sup_tt^{(1-\alpha)/2}\|\nabla T^n_t\mathbb{E}_n f\|_{L^p(\gamma_n)}
=\limsup_{n\to\infty}U^{p,\alpha}_{\gamma_n}(\mathbb{E}_nf)
\\
&\le
C(q)^{1-\alpha}\lim_{n\to\infty}V^{p, \alpha}_{\gamma_n}(\mathbb{E}_nf)
=C(q)^{1-\alpha}V^{p, \alpha}_\gamma(f).
\end{align*}
Hence
$
U^{p,\alpha}_\gamma(f)\le C(q)^{1-\alpha}V^{p, \alpha}_\gamma(f).
$

$\rm (ii)\Rightarrow (i)$.
Assume now that $U^{p, \alpha}_\gamma(f)<\infty$. Then, for any vector field $\Phi$ of the form
$$
\Phi(x) = \sum_{i=1}^n\Psi_i(l_1(x), \ldots, l_n(x))e_i,
$$
where $\Psi\in C_0^\infty(\mathbb{R}^n, \mathbb{R}^n)$, one has
$$
\int_{\mathbb{R}^n} \langle\Psi, \nabla T^n_t\mathbb{E}_n f\rangle\, d\gamma_n
=
\int_{\mathbb{R}^n} \langle\Psi, \mathbb{E}_n\nabla T_t f\rangle \, d\gamma_n
=
\int_X \langle\Phi, \nabla T_t f\rangle_H \, d\gamma
\le
\|\Phi\|_{L^q(\gamma)}\|\nabla T_t f\|_{L^p(\gamma)}.
$$
Thus, $\|\nabla T^n_t\mathbb{E}_n f\|_{L^p(\gamma_n)}\le\|\nabla T_t f\|_{L^p(\gamma)}$,
since $\|\Psi\|_{L^q(\gamma_n)}=\|\Phi\|_{L^q(\gamma)}$.
Hence $U^{p,\alpha}_{\gamma_n}(\mathbb{E}_nf)\le U^{p,\alpha}_\gamma(f)$ and
by Theorem \ref{T2.2} we have $V_{\gamma_n}^{p,\alpha}(\mathbb{E}_nf)<\infty$.
Moreover,
$$
V^{p, \alpha}_{\gamma_n}(\mathbb{E}_nf)\le(2C(p)+1)U^{p, \alpha}_{\gamma_n}(\mathbb{E}_nf)\le (2C(p)+1)U^{p,\alpha}_\gamma(f).
$$
Since $V^{p, \alpha}_{\gamma_n}(\mathbb{E}_nf)\to V^{p, \alpha}_\gamma(f)$,
we obtain the claim.
\end{proof}

We now compare the Gaussian classes $B^\alpha_p(\gamma)$ with other known
scales of Gaussian fractional Sobolev classes (there are also connections with
the scales in \cite{Pineda}, \cite{Nikit1}, \cite{Nikit2},
which will be considered elsewhere).

For $f\in L^p(\gamma)$ and $\alpha>0$ let
$$
V_\alpha(f):= \Gamma(\alpha/2)^{-1}\int_0^\infty t^{\alpha/2 - 1}e^{-t}T_tf\, dt
 = (I - L)^{-\alpha/2}f.
 $$
We recall (see, e.g., \cite{GM}, \cite{DM}, and~\cite{Shig})
that the scale of Sobolev classes $H^{p,\alpha}(\gamma)$ is defined by
$$
H^{p,\alpha}(\gamma):= V_\alpha(L^p(\gamma)),\quad \|f\|_{H^{p,\alpha}(\gamma)}:= \|V_\alpha^{-1}f\|_p.
$$
Now for $r<0$ we define  $H^{p,r}(\gamma)$ to be the dual to
$H^{p',-r}(\gamma)$, $p'=p/(p-1)$.

For $f\in L^p(\gamma)$ we also set
$$
K_t(f) = \inf\{\|f_1\|_p + t\|f_2\|_{W^{p,1}(\gamma)} \colon \ f=f_1 + f_2, \, f_1\in L^p(\gamma),\, f_2\in W^{p,1}(\gamma)\}
$$
and, for $\alpha\in (0,1)$,
consider the class $\mathcal{E}^{p,\alpha}(\gamma)$ of functions with finite norm
$$
\|f\|_{\mathcal{E}^{p,\alpha}(\gamma)}:=
\biggl(\int_0^\infty |t^{-\alpha} K_t(f)|^pt^{-1}\, dt\biggr)^{1/p}.
$$
Similarly, by the same interpolation method one defines the classes
$\mathcal{E}^{p,\alpha}(\gamma)$  for all real~$\alpha$.

As shown by Watanabe \cite{Watanabe}, for all $p>1$, $\alpha\in\mathbb{R}$, and $\varepsilon>0$,
there hold continuous embeddings
$$
H^{p,\alpha+\varepsilon}(\gamma)\subset  \mathcal{E}^{p,\alpha}(\gamma) \subset H^{p,\alpha-\varepsilon}(\gamma).
$$

\begin{theorem}
Let $\alpha\in (0,1)$,  $p\in(1, \infty)$. For any $\beta<\alpha$ we have
$$
H^{p,\alpha}(\gamma)\subset B^\alpha_p(\gamma)\subset \mathcal{E}^{p, \beta}(\gamma).
$$
Moreover, there are number $C_1= C_1(p, \alpha,\beta)$ and
$C_2= C_2(p, \alpha)$
depending only on the indicated parameters such that
$$
\|f\|_{\mathcal{E}^{p,\beta}(\gamma)}
\le
C_1\|f\|_p\max\{1, [V^{p, \alpha}_\gamma(f)]^{\beta/\alpha}\|f\|_p^{-\beta/\alpha}\},
$$
$$
V^{p, \alpha}_\gamma(f)
\le
C_2\|f\|_{H^{p,\alpha}(\gamma)}.
$$
Therefore, for all $\varepsilon>0$ we have continuous embeddings
$$
H^{p,\alpha}(\gamma)\subset B^\alpha_p(\gamma)\subset H^{p,\alpha-\varepsilon}(\gamma).
$$
\end{theorem}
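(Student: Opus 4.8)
The plan is to route the two inclusions through the quantities $V^{p,\alpha}_\gamma$ and $U^{p,\alpha}_\gamma$ already studied in this section, using the subordination formula $V_\alpha g=\Gamma(\alpha/2)^{-1}\int_0^\infty s^{\alpha/2-1}e^{-s}T_sg\,ds$ defining $H^{p,\alpha}(\gamma)$ and the $K$-functional defining $\mathcal{E}^{p,\beta}(\gamma)$, and then to compose with the Watanabe embedding $\mathcal{E}^{p,\beta}(\gamma)\subset H^{p,\beta-\varepsilon'}(\gamma)$ recalled above for the last pair of embeddings.

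First I would prove $H^{p,\alpha}(\gamma)\subset B^\alpha_p(\gamma)$ together with $V^{p,\alpha}_\gamma(f)\le C_2\|f\|_{H^{p,\alpha}(\gamma)}$. By Theorem~\ref{T3.2} it is enough to bound $U^{p,\alpha}_\gamma(f)$ and then invoke $V^{p,\alpha}_\gamma(f)\le(2C(p)+1)U^{p,\alpha}_\gamma(f)$. Writing $f=V_\alpha g$ with $\|g\|_p=\|f\|_{H^{p,\alpha}(\gamma)}$, the semigroup law $T_tT_s=T_{t+s}$ and the fact that the closed operator $\nabla$ passes under the Bochner integral (legitimate for each fixed $t>0$ by the bound below) give
$$
\nabla T_tf=\Gamma(\alpha/2)^{-1}\int_0^\infty s^{\alpha/2-1}e^{-s}\,\nabla T_{t+s}g\,ds .
$$
Combining this with the pointwise-in-time gradient estimate $\|\nabla T_ug\|_p\le C(q)\,u^{-1/2}\|g\|_p$ (the infinite-dimensional form of Lemma~\ref{lem2.1}, obtained through the reduction of Lemma~\ref{lem3.2}) yields
$$
\|\nabla T_tf\|_p\le C(q)\Gamma(\alpha/2)^{-1}\|g\|_p\int_0^\infty s^{\alpha/2-1}e^{-s}(t+s)^{-1/2}\,ds .
$$
Splitting the integral at $s=t$ — on $\{s\le t\}$ bound $(t+s)^{-1/2}\le t^{-1/2}$ and integrate $s^{\alpha/2-1}$; on $\{s\ge t\}$ bound $(t+s)^{-1/2}\le s^{-1/2}$ and note that $\int_t^\infty s^{(\alpha-1)/2-1}e^{-s}\,ds=O(t^{(\alpha-1)/2})$, where $\alpha<1$ enters — one gets $\int_0^\infty s^{\alpha/2-1}e^{-s}(t+s)^{-1/2}\,ds\le c(\alpha)\,t^{(\alpha-1)/2}$ for all $t>0$. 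Hence $t^{(1-\alpha)/2}\|\nabla T_tf\|_p\le C(p,\alpha)\|g\|_p$, i.e. $U^{p,\alpha}_\gamma(f)\le C(p,\alpha)\|f\|_{H^{p,\alpha}(\gamma)}$, whence the assertion with $C_2=(2C(p)+1)C(p,\alpha)$.

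Next I would prove $B^\alpha_p(\gamma)\subset\mathcal{E}^{p,\beta}(\gamma)$ for $\beta<\alpha$ with the stated bound. Put $V:=V^{p,\alpha}_\gamma(f)$; by Theorem~\ref{T3.2}, $U^{p,\alpha}_\gamma(f)\le C(q)^{1-\alpha}V$. Using the decomposition $f=(f-T_{t^2}f)+T_{t^2}f$ in the definition of $K_t(f)$, Theorem~\ref{T3.1} with $c_{t^2}\le(2t^2)^{1/2}$ for $\|f-T_{t^2}f\|_p$, and $\|T_{t^2}f\|_{W^{p,1}(\gamma)}\le\|f\|_p+t^{-(1-\alpha)}U^{p,\alpha}_\gamma(f)$ for the smooth summand, one obtains $K_t(f)\le C_5(p,\alpha)\,V\,t^\alpha+t\|f\|_p$, together with the trivial bound $K_t(f)\le\|f\|_p$. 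Hence, for a threshold $T>0$,
\begin{align*}
\|f\|_{\mathcal{E}^{p,\beta}(\gamma)}^p
&=\int_0^\infty t^{-\beta p-1}K_t(f)^p\,dt\\
&\le\int_0^T t^{-\beta p-1}\bigl(C_5V t^\alpha+t\|f\|_p\bigr)^p\,dt+\|f\|_p^p\int_T^\infty t^{-\beta p-1}\,dt\\
&\le C\bigl(V^pT^{(\alpha-\beta)p}+\|f\|_p^pT^{(1-\beta)p}+\|f\|_p^pT^{-\beta p}\bigr),
\end{align*}
all integrals converging since $0<\beta<\alpha<1$. Taking $T=1$ when $V\le\|f\|_p$ bounds the right-hand side by $C\|f\|_p^p$, and taking $T=(\|f\|_p/V)^{1/\alpha}$ when $V>\|f\|_p$ bounds it by $C\|f\|_p^{(\alpha-\beta)p/\alpha}V^{\beta p/\alpha}$ (the middle term being then $(\|f\|_p/V)^{p/\alpha}$ times the first, hence dominated). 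In both cases this equals $C_1^p\bigl(\|f\|_p\max\{1,(V/\|f\|_p)^{\beta/\alpha}\}\bigr)^p$, the claimed estimate. Finally, given $\varepsilon>0$, I would apply the above with $\beta=\alpha-\varepsilon/2$ and the Watanabe embedding $\mathcal{E}^{p,\alpha-\varepsilon/2}(\gamma)\subset H^{p,\alpha-\varepsilon}(\gamma)$; equipping $B^\alpha_p(\gamma)$ with the norm $\|f\|_p+V^{p,\alpha}_\gamma(f)$, the inequalities $\|f\|_p^{1-\beta/\alpha}V^{\beta/\alpha}\le\|f\|_p+V$ (Young) and $\|f\|_p\le\|f\|_{H^{p,\alpha}(\gamma)}$ (since $V_\alpha$ is a contraction on $L^p(\gamma)$) show that all three displayed embeddings are continuous.

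I expect the main obstacle to be the estimate in the first step: extracting precisely the gain $t^{(1-\alpha)/2}$ from the subordination integral is delicate, because the density $s^{\alpha/2-1}e^{-s}(t+s)^{-1/2}$ fails to be integrable near $s=0$ at $t=0$ (as $\alpha<1$), which is exactly what forces the split at $s=t$ and the use of $\alpha<1$ to control $\int_t^\infty s^{(\alpha-1)/2-1}e^{-s}\,ds$. The $K$-functional bookkeeping in the second step (the choice $s=t^2$, the optimal threshold $T$, and the two-case analysis producing the precise $\max\{1,\cdot\}$) is routine but must be arranged with care to match the stated constants.
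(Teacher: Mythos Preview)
Your proof is correct. The argument for $B^\alpha_p(\gamma)\subset\mathcal{E}^{p,\beta}(\gamma)$ is essentially the paper's: both use the splitting $f=(f-T_{t^2}f)+T_{t^2}f$ to bound $K_t(f)\le C_0(p,\alpha)Vt^\alpha+t\|f\|_p$ via Theorem~\ref{T3.1} and Theorem~\ref{T3.2}, then integrate over $(0,T)$ and $(T,\infty)$ with the same dichotomy $T=1$ versus $T=(\|f\|_p/V)^{1/\alpha}$.

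For $H^{p,\alpha}(\gamma)\subset B^\alpha_p(\gamma)$ you take a genuinely different route. The paper works directly with the definition of $V^{p,\alpha}_\gamma$: writing $f=V_\alpha g$, it splits $\int_X{\rm div}_\gamma\Phi\, T_sg\,d\gamma$ at a threshold $R$ depending on $\Phi$, keeps the divergence form on $[0,R]$ (bounded by $\|{\rm div}_\gamma\Phi\|_q\|g\|_p$), integrates by parts on $[R,\infty)$ (bounded by $\|\Phi\|_q\|\nabla T_sg\|_p$), and then optimizes $R=(\|\Phi\|_q/\|{\rm div}_\gamma\Phi\|_q)^2$. You instead bound $U^{p,\alpha}_\gamma(f)$ by passing $\nabla T_t$ through the subordination integral, splitting $\int_0^\infty s^{\alpha/2-1}e^{-s}(t+s)^{-1/2}\,ds$ at $s=t$, and then invoke $V^{p,\alpha}_\gamma\le(2C(p)+1)U^{p,\alpha}_\gamma$ from Theorem~\ref{T3.2}. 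Your approach is slightly less self-contained (it leans on Theorem~\ref{T3.2}) but is conceptually clean, since the interpolation between $\|\Phi\|_q$ and $\|{\rm div}_\gamma\Phi\|_q$ is already encoded in that theorem; the paper's approach is more direct and gives an explicit constant $C_2$ without the extra factor $2C(p)+1$. The split at $s=t$ and the use of $\alpha<1$ to control $\int_t^\infty s^{(\alpha-3)/2}\,ds$ that you flag as delicate is exactly parallel to the paper's split at $R$ and its use of $\alpha<1$ to control $\int_R^\infty t^{\alpha/2-3/2}e^{-t}\,dt$.
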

\begin{proof}
We observe that
\begin{align*}
K_t(f)
&\le \|f-T_{t^2}f\|_p + t\|T_{t^2}f\|_{W^{p,1}(\gamma)}\le 2^{1-\alpha}C(p)^\alpha t^\alpha\ V_\gamma^{p,\alpha}(f)
+ t\|T_{t^2}f\|_p + t\|\nabla T_{t^2}f\|_p
\\
&\le
t\|f\|_p +
[2^{1-\alpha}C(p)^\alpha + C(q)^{1-\alpha}]V^{p, \alpha}_\gamma(f)t^\alpha.
\end{align*}
Set $C_0(p, \alpha):=2^{1-\alpha}C(p)^\alpha + C(q)^{1-\alpha}$.
For any $R>0$ we have
\begin{align*}
&\|f\|_{\mathcal{E}^{p,\beta}(\gamma)}= \biggl(\int_0^\infty |t^{-\beta} K_t(f)|^pt^{-1}\, dt\biggr)^{1/p}=
\biggl(\int_0^R t^{-p\beta-1} K_t(f)^p\, dt + \int_R^\infty t^{-p\beta-1} K_t(f)^p\, dt\biggr)^{1/p}
\\
&\le
\biggl(\int_0^R t^{-p\beta-1} (t\|f\|_p + C_0(p, \alpha)V^{p, \alpha}_\gamma(f)t^\alpha)^p\, dt\biggr)^{1/p}+
\|f\|_p\biggl(\int_R^\infty t^{-p\beta-1}\,dt\biggr)^{1/p}
\\
&\le
\|f\|_p\biggl(\int_0^R t^{p(1-\beta)-1}\,dt\biggr)^{1/p}
+
C_0(p, \alpha)V^{p, \alpha}_\gamma(f)\biggl(\int_0^R t^{p(\alpha-\beta)-1}\,dt\biggr)^{1/p}
+
\|f\|_p\biggl(\int_R^\infty t^{-p\beta-1}\,dt\biggr)^{1/p}
\\
&=
(p(1-\beta))^{-1/p}\|f\|_pR^{1-\beta}
+
C_0(p, \alpha)(p(\alpha - \beta))^{-1/p}V^{p, \alpha}_\gamma(f)R^{\alpha-\beta}
+
(p\beta)^{-1/p}\|f\|_pR^{-\beta}.
\end{align*}
If $V^{p, \alpha}_\gamma(f)\le \|f\|_p$, then taking $R=1$ we obtain
$$
\|f\|_{\mathcal{E}^{p,\beta}(\gamma)} \le C_1(p, \alpha,\beta)\|f\|_p
$$
with $C_1(p, \alpha,\beta) = (p(1-\beta))^{-1/p} + C_0(p, \alpha)(p(\alpha - \beta))^{-1/p} + (p\beta)^{-1/p}$.
If $V^{p, \alpha}_\gamma(f)\ge \|f\|_p$ we can take
$R = (\|f\|_p [V^{p, \alpha}_\gamma(f)]^{-1})^{1/\alpha}\le1$ and conclude that
$$
\|f\|_{\mathcal{E}^{p,\beta}(\gamma)}\le
C_1(p, \alpha,\beta)[V^{p, \alpha}_\gamma(f)]^{\beta/\alpha}\|f\|_p^{1-\beta/\alpha}.
$$
Therefore,
$$
\|f\|_{\mathcal{E}^{p,\beta}(\gamma)}
\le
C_1(p, \alpha,\beta)\|f\|_p\max\{1, [V^{p, \alpha}_\gamma(f)]^{\beta/\alpha}\|f\|_p^{-\beta/\alpha}\},
$$
as announced.

Let $f = V_\alpha (g)$, $g\in L^p(\gamma)$. Then for any $\Phi\in\mathcal{FC}^\infty$ and
$R>0$ we have
\begin{align*}
&\int_X {\rm div}_\gamma\Phi f \, d\gamma
=
\int_X {\rm div}_\gamma\Phi V_\alpha (g)\, d\gamma
\\
&=
\Gamma(\alpha/2)^{-1}\int_0^Rt^{\alpha/2 - 1}e^{-t}
\int_X {\rm div}_\gamma\Phi T_tg\, d\gamma\, dt
-
\Gamma(\alpha/2)^{-1}\int_R^\infty t^{\alpha/2 - 1}e^{-t}
\int_X \langle\Phi, \nabla T_tg\rangle \, d\gamma\, dt
\\
&\le
\Gamma(\alpha/2)^{-1}\|g\|_p\|{\rm div}_\gamma\Phi\|_q\int_0^Rt^{\alpha/2 - 1}e^{-t}\,dt
+
\Gamma(\alpha/2)^{-1}C(q)\|g\|_p\|\Phi\|_q\int_R^\infty t^{\alpha/2 - 1}e^{-t}\frac{e^{-t}}{\sqrt{1-e^{-2t}}}\ dt
\\
&\le
\Gamma(\alpha/2)^{-1}(2/\alpha)\|g\|_p\|{\rm div}_\gamma\Phi\|_qR^{\alpha/2}
+
\Gamma(\alpha/2)^{-1}C(q)\|g\|_p\|\Phi\|_q\int_R^\infty t^{\alpha/2 - 1-1/2}e^{-t}\, dt
\\
&\le
\Gamma(\alpha/2)^{-1}(2/\alpha)\|g\|_p\|{\rm div}_\gamma\Phi\|_qR^{\alpha/2}
+
\Gamma(\alpha/2)^{-1}2(1-\alpha)^{-1}C(q)\|g\|_p\|\Phi\|_qR^{\alpha/2-1/2}.
\end{align*}
We now take $R = (\|\Phi\|_q\|{\rm div}_\gamma\Phi\|_q^{-1})^2$ and obtain
$$
\int_X {\rm div}_\gamma\Phi f\, d\gamma
\le
C(p, \alpha)\|g\|_p \|{\rm div}_\gamma\Phi\|_q^{1-\alpha}\|\Phi\|_q^\alpha,
$$
where
$C(p, \alpha) = \Gamma(\alpha/2)^{-1}(2/\alpha) + 2\Gamma(\alpha/2)^{-1}(1-\alpha)^{-1}C(q)$.
Hence we have $f\in B^\alpha_p(\gamma)$ and $V^{p, \alpha}_\gamma(f) \le C(p, \alpha)\|g\|_p$.
\end{proof}

\begin{remark}
{\rm
We draw the reader's attention to the fact that in the case of a Gaussian measure
in place of Lebesgue measure we introduce two  definitions with certain similarities
to the Lebesgue case (integration by parts and  semigroup estimates), but do not consider
the straightforward analog of the $L^p$-bound of the difference $f_h-f$.
Of course, the point is that Gaussian measures are not translation invariant.
As a result, shifts of a function in $L^p$ may fail to belong to~$L^p$. However,
in principle it is possible to study functions in $L^p(\gamma)$ with the property that
$\|f_h-f\|_{L^p(\gamma)}\le C|h|^\alpha$. For example, if $\alpha=1$ and $p>1$,
then the estimate
$\|f_h-f\|_{L^p(\gamma)}\le C|h|$
yields that $f$ has a Sobolev derivative $\partial_h f\in L^p(\gamma)$, which can be obtained
as the limit of a subsequence in $n(f_{h/n}-f)$ weakly converging in~$L^p(\gamma)$
(recall that any sequence bounded in $L^p(\gamma)$ contains a weakly converging subsequence).
In the finite-dimensional case this shows that $f\in W^{p,1}(\gamma)$, but in infinite dimensions
$f$ belongs to some larger weak Sobolev class.

Conversely, if $f\in W^{p,1}(\gamma)$, then, for any $s<p$ and $h\in H$ with $|h|\le 1$,
it is easy to obtain the bound
$$
\|f_h-f\|_{L^s(\gamma)}\le C |h| \| \nabla f\|_{L^p(\gamma,H)}
$$
with an absolute constant $C$.
This can be done by expressing $f_h-f$ through the integral of $\nabla f(x-th)$
and estimating $\|\nabla f(\cdot -th)\|_{L^s(\gamma)}$ via $\|\nabla f\|_{L^p(\gamma)}$
by means of the Cameron--Martin formula for the density of the shifted measure and H\"older's inequality
(actually, this estimate involves $\|\partial_h f\|_{L^p(\gamma)}$, hence remains valid for the
aforementioned larger weak Sobolev classes).

It is also natural to consider
bounds on the total variation of the measure $(f\cdot\gamma)_h-f\cdot\gamma$,
which is in the spirit of the next section and (in case $\alpha=1$) of constructions
in \cite{BReb1} and \cite{BReb2}.
}\end{remark}

\section{The Nikolskii--Besov smoothness of measures}\label{sect7}

In this section we introduce the Nikolskii--Besov $\alpha$-smoothness of measures on locally convex spaces.
This is also of interest in the finite-dimensional case because enables us to pass from functions
to measures when considering the directional smoothness.

For a Radon measure $\mu$ on a locally convex space $X$ and a vector $h\in X$ the measure $\mu_h$
is defined by
$$
\mu_h(A) := \mu (A-h).
$$

\begin{definition}\label{D4.1}
Let $\alpha\in (0,1]$.
A Radon measure $\mu$ on a locally convex space $X$ is called $\alpha$-H\"older
along a vector $h\in E$ if
$$
\|\mu_{th} - \mu\|_{\rm TV} \le C|t|^\alpha \quad \forall\, t\in\mathbb{R}
$$
for some number $C$.
The infimum of such numbers $C$ is denoted by $V^\alpha(\mu; h)$.
\end{definition}

For $\alpha = 1$ the previous definition coincides with the definition of the Skorohod differentiability
of  $\mu$ along $h$ (see \cite[Section 3.1]{DM}). It is known that the latter is equivalent to the existence
of the Skorohod derivative $d_h\mu$ of $\mu$ defined as the limit of the measures $t^{-1}(\mu_{th}-\mu)$
in the weak topology as $t\to 0$. It is also equivalent
to the estimate
$$
\biggl|\int_X \partial_h f(x)\, \mu(dx)\biggr|\le C \sup_x |f(x)| \quad \forall\, f\in \mathcal{FC}(X).
$$

\begin{definition}\label{D4.2}
Let $D^\alpha(\mu)$ be the set of all vectors $h\in X$ such that
$\mu$ is $\alpha$-H\"older along~$h$.
\end{definition}

In case $\alpha=1$ we obtain the so-called subspace $D_C(\mu)$ of Skorohod differentiability.
For a nonzero measure $\mu$ the subspace of Skorohod differentiability can be equipped with a norm
with respect to which it becomes a Banach space whose closed unit ball is compact in~$X$
 (see \cite[Theorem 5.1.1]{DM}).
An analog of this is proved below for $D^\alpha(\mu)$.

The following proposition is a corollary of the Nikodym theorem (see \cite[Corollary~4.6.4]{mera}).

\begin{proposition}
A Radon measure $\mu$ is $\alpha$-H\"older along $h\in X$ if and only if
for every Borel set $A$ there is a number $C(A)$ such that
$$
|\mu(A+th) - \mu(A)| \le C(A)|t|^\alpha \quad \forall\, t.
$$
This is also equivalent to the
property that the function $t\mapsto\mu(A+th)$ is H\"older of order $\alpha$.
\end{proposition}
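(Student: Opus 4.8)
The plan is to establish the chain of equivalences by proving the cycle $(1)\Rightarrow(3)\Rightarrow(2)\Rightarrow(1)$, where $(1)$ is that $\mu$ is $\alpha$-H\"older along $h$ in total variation, $(2)$ is the set-wise estimate that for every Borel $A$ there is a number $C(A)$ with $|\mu(A+th)-\mu(A)|\le C(A)|t|^\alpha$ for all $t$, and $(3)$ is that for every Borel $A$ the function $t\mapsto\mu(A+th)$ is H\"older of order $\alpha$ on $\mathbb{R}$.

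The implications $(1)\Rightarrow(3)$ and $(3)\Rightarrow(2)$ are purely formal. For $(1)\Rightarrow(3)$ I would use the identity $\mu_{-sh}(B)=\mu(B+sh)$: for any Borel $A$ and any $t_1,t_2$, setting $B:=A+t_2h$ and $s:=t_1-t_2$ one has
$$
\mu(A+t_1h)-\mu(A+t_2h)=(\mu_{-sh}-\mu)(B),
$$
whose absolute value is at most $\|\mu_{-sh}-\mu\|_{\rm TV}\le V^\alpha(\mu;h)\,|t_1-t_2|^\alpha$; thus each $t\mapsto\mu(A+th)$ is $\alpha$-H\"older with constant $V^\alpha(\mu;h)$, uniformly in $A$. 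The implication $(3)\Rightarrow(2)$ is the special case $t_2=0$.

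The real content is $(2)\Rightarrow(1)$, and this is exactly where the Nikodym theorem enters. For each $t\ne0$ consider the measure $\lambda_t:=|t|^{-\alpha}(\mu_{th}-\mu)$. Since the shift $x\mapsto x+th$ is a homeomorphism of $X$, each $\mu_{th}$ is again a bounded Radon measure, so $\lambda_t$ is a bounded countably additive signed measure on the Borel $\sigma$-algebra of $X$. Applying hypothesis $(2)$ to the set $A$ with parameter $-t$ gives $|\mu(A-th)-\mu(A)|\le C(A)|t|^\alpha$, that is,
$$
|\lambda_t(A)|\le C(A)\qquad\text{for every }t\ne0,
$$
so the family $\{\lambda_t\}_{t\ne0}$ is set-wise bounded. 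By the Nikodym boundedness theorem cited in the statement, a set-wise bounded family of bounded countably additive signed measures on a $\sigma$-algebra is uniformly bounded in variation; hence $M:=\sup_{t\ne0}\|\lambda_t\|_{\rm TV}<\infty$, and this says precisely that $\|\mu_{th}-\mu\|_{\rm TV}\le M|t|^\alpha$ for all $t$, i.e.\ $(1)$ holds with $V^\alpha(\mu;h)\le M$.

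The only point needing a little care is that the Nikodym boundedness theorem is applied to the a priori uncountable family $\{\lambda_t\}_{t\ne0}$; this is legitimate because on a $\sigma$-algebra set-wise boundedness already forces uniform boundedness of the total variations irrespective of the cardinality of the index set. If one prefers to avoid this, the same argument restricted to rational $t$ yields $\|\mu_{th}-\mu\|_{\rm TV}\le M|t|^\alpha$ for $t\in\mathbb{Q}$, and this extends to all real $t$ by approximating $t$ by rationals, using the continuity of $t\mapsto\mu(A+th)$ from $(3)$ together with the identity $\|\nu\|_{\rm TV}=\sup_A\bigl(|\nu(A)|+|\nu(X\setminus A)|\bigr)$. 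I do not expect any genuine obstacle beyond correctly invoking Nikodym's theorem in this form.
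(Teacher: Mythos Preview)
Your proposal is correct and is precisely the argument the paper has in mind: the paper does not give a detailed proof but simply states that the proposition ``is a corollary of the Nikodym theorem'' (citing \cite[Corollary~4.6.4]{mera}), and your implication $(2)\Rightarrow(1)$ via the set-wise bounded family $\lambda_t=|t|^{-\alpha}(\mu_{th}-\mu)$ together with Nikodym's uniform boundedness principle is exactly the intended deduction. Your care with the uncountable index set is well placed but, as you note, not an obstacle; the remaining implications are indeed formal.
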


The following theorem describes the structure of the space $D^\alpha(\mu)$
in the spirit of the aforementioned
theorem for the space of Skorohod differentiability.
It is worth noting that according to another known result (see \cite[Chapter~5]{DM}),
the space $C(\mu)$ of all vectors of continuity of a nonzero Radon measure $\mu$
(vectors $h$ such that $\lim\limits_{t\to 0} \|\mu_{th}-\mu\|=0$) is a linear subspace
in~$X$ that is complete with respect to the metric
$d_0(a,b)=\sup_{|t|\le 1} \|\mu_{ta}-\mu_{tb}\|$ and closed balls of a sufficiently small radius
are compact in~$X$.

\begin{theorem}\label{T4.1}
Let $\mu$ be a nonzero Radon measure.
The space $D^\alpha(\mu)$ equipped
with the translation invariant metric
$$
d_{D^\alpha(\mu)}(h_1,h_2):=V^\alpha(\mu; h_1-h_2)
$$
is a complete metric vector space compactly embedded into the space~$X$.
\end{theorem}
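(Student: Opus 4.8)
The plan is to follow the proof of the corresponding statement for the subspace of Skorohod differentiability, \cite[Theorem~5.1.1]{DM}, with the Lipschitz condition replaced throughout by the $\alpha$-H\"older condition. We may assume $\mu$ is a nonnegative finite measure, the signed case reducing to this by passing to $|\mu|$ (note $\mu_{th}=\mu\Rightarrow|\mu|_{th}=|\mu|$), cf.~\cite{DM}. The algebraic and metric facts are routine: the identity $\mu_{t(h_1+h_2)}-\mu=(\mu_{th_2}-\mu)_{th_1}+(\mu_{th_1}-\mu)$ together with the translation invariance of $\|\cdot\|_{\rm TV}$ gives that $D^\alpha(\mu)$ is closed under addition with $V^\alpha(\mu;h_1+h_2)\le V^\alpha(\mu;h_1)+V^\alpha(\mu;h_2)$, while the substitution $s=\lambda t$ gives closedness under scalars and $V^\alpha(\mu;\lambda h)=|\lambda|^\alpha V^\alpha(\mu;h)$, hence $V^\alpha(\mu;-h)=V^\alpha(\mu;h)$. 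Thus $d_{D^\alpha(\mu)}$ is a translation invariant, symmetric function satisfying the triangle inequality, the vector operations are continuous, and it separates points: $V^\alpha(\mu;h)=0$ would force $\mu_{th}=\mu$ for all $t$, which for $h\ne0$ is impossible since, choosing $l\in X^*$ with $l(h)\ne0$, the image $\mu\circ l^{-1}$ would be a nonzero finite translation invariant Borel measure on~$\mathbb{R}$.

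The analytic core has two parts. (a) \emph{Lower semicontinuity.} If $h_n\to h$ in $X$, then for each $f\in C_b(X)$ dominated convergence gives $\int f\,d\mu_{th_n}=\int f(x+th_n)\,\mu(dx)\to\int f(x+th)\,\mu(dx)$, so $\mu_{th_n}\to\mu_{th}$ weakly; since $\|\cdot\|_{\rm TV}$ is lower semicontinuous for weak convergence of Radon measures, $\|\mu_{th}-\mu\|_{\rm TV}\le(\liminf_nV^\alpha(\mu;h_n))|t|^\alpha$, i.e. $V^\alpha(\mu;h)\le\liminf_nV^\alpha(\mu;h_n)$. (b) \emph{Compactness of balls.} For $r>0$ put $B_r:=\{h:V^\alpha(\mu;h)\le r\}$. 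Choose a compact $K\subset X$ with $\mu(K)>\tfrac{2}{3}\mu(X)$ (possible as $\mu$ is Radon) and set $t_0=(\mu(X)/(3r))^{1/\alpha}$. For $h\in B_r$ one has $\mu(K+t_0h)\ge\mu(K)-r\,t_0^\alpha=\mu(K)-\tfrac{1}{3}\mu(X)>\tfrac{1}{3}\mu(X)$ (since $\mu_{t_0h}(K+t_0h)=\mu(K)$ and $\|\mu_{t_0h}-\mu\|_{\rm TV}\le r\,t_0^\alpha$), so $\mu(K)+\mu(K+t_0h)>\mu(X)$, forcing $K\cap(K+t_0h)\ne\varnothing$, i.e. $t_0h\in K-K$. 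Hence $B_r\subset t_0^{-1}(K-K)$, a compact subset of $X$, so the closure $\overline{B_r}$ in $X$ is compact. Moreover, projecting by $l\in X^*$ as in the non-degeneracy argument yields $|l(h)|^\alpha\le V^\alpha(\mu;h)/V^\alpha(\mu\circ l^{-1};1)$ when $\mu\circ l^{-1}$ is not a Dirac measure, in which case the denominator is positive (and $l(h)=0$ otherwise, $h$ being a vector of continuity of $\mu$ and $\mu\circ l^{-1}$ a point mass), so $D^\alpha(\mu)$ maps continuously into $(X,\sigma(X,X^*))$; as this topology agrees with that of $X$ on the compact sets $\overline{B_r}$, and the $B_r$ are neighbourhoods of $0$, the inclusion $D^\alpha(\mu)\hookrightarrow X$ is continuous and compact.

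For completeness, let $\{h_n\}$ be Cauchy in $D^\alpha(\mu)$. It is bounded, so $h_n\in B_r$ for some $r$, and by the above estimate each $l(h_n)$, $l\in X^*$, is a Cauchy sequence in $\mathbb{R}$. Thus any two cluster points of $\{h_n\}$ in the compact set $\overline{B_r}$ are separated by no functional, hence coincide; so $\{h_n\}$ has a unique cluster point $h\in\overline{B_r}$ and $h_n\to h$ in $X$. Finally, given $\varepsilon>0$, pick $N$ with $V^\alpha(\mu;h_n-h_m)<\varepsilon$ for $n,m\ge N$; fixing $n\ge N$ and letting $m\to\infty$ so that $h_n-h_m\to h_n-h$ in $X$, the lower semicontinuity (a) gives $V^\alpha(\mu;h_n-h)\le\varepsilon$. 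Hence $h\in D^\alpha(\mu)$ and $h_n\to h$ in $D^\alpha(\mu)$.

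I expect the decisive step to be (b): spotting the pigeonhole mechanism $\mu(K)+\mu(K+t_0h)>\mu(X)\Rightarrow K\cap(K+t_0h)\ne\varnothing\Rightarrow t_0h\in K-K$, which confines an entire ball of $D^\alpha(\mu)$ into the single compact set $t_0^{-1}(K-K)$. The remaining difficulties are bookkeeping: lower semicontinuity only holds for sequences (dominated convergence fails for nets), one must locate a \emph{unique} cluster point inside a possibly non-metrizable compact subset of $X$, and the reduction of signed $\mu$ to the nonnegative case needs the standard facts about $|\mu|$ from \cite[Chapter~5]{DM}.
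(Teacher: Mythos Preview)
Your proof is correct. The paper's argument differs mainly in how the compact embedding is obtained: rather than your direct pigeonhole construction $B_r \subset t_0^{-1}(K-K)$, the paper simply observes that $D^\alpha(\mu) \subset C(\mu)$ with $d_0(0,h) = \sup_{|t|\le 1}\|\mu_{th}-\mu\|_{\rm TV} \le V^\alpha(\mu;h)$, and then invokes the result recalled just before the theorem (from \cite[Chapter~5]{DM}) that $(C(\mu),d_0)$ already embeds compactly in~$X$. Your approach is thus a self-contained reproof of that embedding in the present context; the paper's route is shorter but leans on the quoted machinery. The completeness arguments coincide in substance: extract a limit in $X$ via compactness of balls, then use lower semicontinuity of $V^\alpha(\mu;\cdot)$ under $X$-convergence to upgrade this to convergence in~$D^\alpha(\mu)$ (the paper passes to a convergent subsequence rather than your unique-cluster-point formulation, but this is cosmetic). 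One small comment: your blanket reduction ``we may assume $\mu$ nonnegative'' is slightly imprecise, since the metric is defined through $\mu$, not $|\mu|$; what you actually use is the inequality $V^\alpha(|\mu|;h)\le V^\alpha(\mu;h)$ (from $|\,|\nu_1|-|\nu_2|\,|\le |\nu_1-\nu_2|$), which lets you run step~(b) and the non-degeneracy argument with $|\mu|$ while keeping the original metric on~$D^\alpha(\mu)$.
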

\begin{proof}
It is easily verified that
$D^\alpha(\mu)$ is a linear subspace and
$V^\alpha(\mu; h)$ is indeed a metric.
Clearly, $D^\alpha(\mu)\subset C(\mu)$ and the identity embedding of
$(D^\alpha(\mu), d_{D^\alpha(\mu)})$ into $(C(\mu),d_0)$ is continuous.
Hence the embedding into $X$ is compact.

Let now $\{h_n\}$ be a Cauchy sequence in $(D^\alpha(\mu), d_{D^\alpha(\mu)})$.
This sequence is bounded in $D^\alpha(\mu)$. By the compactness of embedding into $X$
there is a subsequence $\{h_{n_k}\}$ which converges in $X$ to some vector~$h$.
For every function $\varphi\in C_b(X)$ with $\sup|\varphi|\le1$ we have
$$
\int_X (\varphi(x+th_n) - \varphi(x + th_{n_k}))\, \mu(dx)\le \|\mu_{th_n} - \mu_{th_{n_k}}\|_{\rm TV}.
$$
By Lebesgue's dominated convergence theorem the left-hand side tends to
$$
\int_X (\varphi(x+th_n) - \varphi(x+th))\, \mu(dx),
$$
which yields that
$$
\|\mu_{th_n} - \mu_{th}\|_{\rm TV}\le\liminf_{k\to\infty}\|\mu_{th_n} - \mu_{th_{n_k}}\|_{\rm TV}.
$$
Since the sequence $\{h_n\}$ is Cauchy in $D^\alpha(\mu)$,
 for every fixed $\varepsilon>0$, for all $n$ large enough we have
$$
\|\mu_{th_n} - \mu_{th}\|_{\rm TV}\le\varepsilon |t|^\alpha.
$$
Therefore, $h_n - h\in D^\alpha(\mu)$ and $h_n\to h$ in $D^\alpha(\mu)$. Thus, $D^\alpha(\mu)$ is
complete. A similar reasoning shows that closed balls in $D^\alpha(\mu)$ are closed in~$X$, hence
they are compact in~$X$.
\end{proof}

We recall that for every Radon measure $\mu$ on $X$ and every nonzero vector $h$, we can
write $X$ as a direct topological sum $X=\mathbb{R}h\oplus Y$ of the one-dimensional space
generated by $h$ and a closed hyperplane $Y$ such that there exist measures $\mu^y$, $y\in Y$,
on the straight lines $\mathbb{R}h+y$ representing $\mu$ in the form
$$
\mu(B)=\int_Y \mu^y(B)\, |\mu|_Y(dy)
$$
for every Borel set $B$, where $|\mu|_Y$ is the projection of $|\mu|$ on~$Y$
(and the functions $y\mapsto \mu^y(B)$ are Borel measurable). The measures $\mu^y$ are called conditional measures.
They are defined uniquely up to redefining on a set of $|\mu|_Y$-measure zero.

It is sometimes convenient to define the conditional measures $\mu^y$ on the same line $\mathbb{R}h$
 in place
of different parallel straight lines. This can be easily done by passing to the measures
$\nu^y(A):=\mu^y(A+y)$ for $A\subset \mathbb{R}h$; in that case
  $\mathbb{R}h$ can be identified with~$\mathbb{R}$.

Due to Example \ref{ex1} we know that even on the plane conditional measures for an $\alpha$-H\"older measure
need not  be $\alpha$-H\"older of the same order $\alpha$.
Nevertheless, the following proposition shows that they are $\beta$-H\"older
with an arbitrary order $\beta<\alpha$.

\begin{proposition}
Suppose that  a Radon measure $\mu$ is $\alpha$-H\"older along a vector $h$.
Then $|\mu|_Y$-almost every
conditional measure $\mu^y$ is  $\beta$-H\"older for every $\beta<\alpha$.
\end{proposition}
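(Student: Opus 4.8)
The plan is to disintegrate $\mu$ along the line $\mathbb{R}h$, transfer the H\"older estimate of Definition~\ref{D4.1} to the conditional measures fibrewise, and then localize it over the base by a Borel--Cantelli argument on dyadic scales, finishing with a lower‑semicontinuity step to pass from dyadic increments to arbitrary ones. First I fix the splitting $X=\mathbb{R}h\oplus Y$ and the conditional measures $\mu^y$, identifying each line $\mathbb{R}h+y$ with $\mathbb{R}$ so that $\mu^y$ becomes a finite signed measure $\nu^y$ on $\mathbb{R}$ with $\|\nu^y\|_{\rm TV}=:m(y)<\infty$ for $|\mu|_Y$-a.e.\ $y$. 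Since the shift by $th$ maps each line $\mathbb{R}h+y$ onto itself and leaves the $Y$‑coordinate unchanged, the signed measure $\mu_{th}-\mu$ disintegrates over the same base $|\mu|_Y$ with fibres $\nu^y_{t}-\nu^y$; choosing Hahn decompositions of these fibres measurably in $y$ and taking the union of the positive sets along the fibres gives
\[
\|\mu_{th}-\mu\|_{\rm TV}=\int_Y\|\nu^y_{t}-\nu^y\|_{\rm TV}\,|\mu|_Y(dy).
\]
Writing $g(t,y):=\|\nu^y_{t}-\nu^y\|_{\rm TV}$, this together with Definition~\ref{D4.1} yields $\int_Y g(t,y)\,|\mu|_Y(dy)\le V^\alpha(\mu;h)\,|t|^\alpha$ for all $t\in\mathbb{R}$; moreover $g$ is even in $t$, subadditive ($g(t+s,y)\le g(t,y)+g(s,y)$, by shift invariance of the total variation), bounded by $2m(y)$, and $y\mapsto g(t,y)$ is $|\mu|_Y$-measurable for each fixed $t$.

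Now fix $\beta\in(0,\alpha)$. By Chebyshev's inequality,
\[
|\mu|_Y\bigl(\{y: g(2^{-k},y)>2^{-k\beta}\}\bigr)\le 2^{k\beta}\int_Y g(2^{-k},y)\,|\mu|_Y(dy)\le V^\alpha(\mu;h)\,2^{-k(\alpha-\beta)},
\]
and since $\alpha-\beta>0$ these numbers are summable in $k$. By the Borel--Cantelli lemma, for $|\mu|_Y$-a.e.\ $y$ there is an integer $k_0(y)$ with $g(2^{-k},y)\le 2^{-k\beta}$ for all $k\ge k_0(y)$.

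Fix such a $y$. If $d$ is a dyadic rational with $2^{-k-1}<d\le 2^{-k}\le 2^{-k_0(y)}$, then $k\ge k_0(y)$ and the finite binary expansion $d=\sum_{j\ge k}\varepsilon_j2^{-j}$ has all nonzero digits at indices $j\ge k\ge k_0(y)$, so subadditivity gives $g(d,y)\le\sum_{j\ge k}\varepsilon_j g(2^{-j},y)\le\sum_{j\ge k}2^{-j\beta}=\frac{2^{-k\beta}}{1-2^{-\beta}}\le\frac{2^{\beta}}{1-2^{-\beta}}\,d^{\beta}$, while for dyadic $d$ with $|d|\ge 2^{-k_0(y)}$ the crude bound $g(d,y)\le 2m(y)\le 2m(y)2^{k_0(y)\beta}|d|^\beta$ suffices. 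Hence $g(d,y)\le C(y)|d|^\beta$ for every dyadic $d$, with $C(y):=\max\{2^{\beta}(1-2^{-\beta})^{-1},\,2m(y)2^{k_0(y)\beta}\}$. To reach all $t\in\mathbb{R}$, recall that $\sigma\mapsto\|\sigma\|_{\rm TV}$ is lower semicontinuous for weak convergence of measures and that $t\mapsto\nu^y_{t}$ is weakly continuous; choosing dyadic $d_n\to t$ we get $g(t,y)\le\liminf_n g(d_n,y)\le\liminf_n C(y)|d_n|^\beta=C(y)|t|^\beta$. Thus $\mu^y$ is $\beta$-H\"older. Applying this to a sequence $\beta_i\uparrow\alpha$ and discarding the corresponding countable union of $|\mu|_Y$-null sets, we obtain a single full-measure set of $y$ on which $\mu^y$ is $\beta$-H\"older for every $\beta<\alpha$ (a $\beta_i$-H\"older measure of finite variation being $\beta$-H\"older whenever $\beta<\beta_i$).

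I expect the main obstacle to be the very first step: the fibrewise identity for the total variation, which requires a measurable selection of Hahn decompositions of the signed fibre measures $\nu^y_{t}-\nu^y$ and the attendant measurability of $(t,y)\mapsto g(t,y)$ used in Chebyshev's inequality; this rests on the standard disintegration theory for Radon measures already invoked in the paragraph preceding the proposition. The other delicate point is the passage from dyadic to arbitrary increments, for which $t\mapsto g(t,y)$ need not be continuous but is saved by the weak lower semicontinuity of the total variation norm; the remaining computations are elementary.
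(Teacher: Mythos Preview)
Your proof is correct and follows essentially the same route as the paper: the fibrewise identity $\|\mu_{th}-\mu\|_{\rm TV}=\int_Y\|\mu^y_t-\mu^y\|_{\rm TV}\,|\mu|_Y(dy)$, a dyadic summability argument to control $\|\mu^y_{2^{-k}}-\mu^y\|$ for a.e.\ $y$, and then subadditivity of $t\mapsto\|\mu^y_t-\mu^y\|$ via binary expansions. The only differences are cosmetic: the paper uses monotone convergence on $\sum_k 2^{k\beta}\|\mu^y_{2^{-k}}-\mu^y\|$ where you use Chebyshev plus Borel--Cantelli, and the paper telescopes directly along the infinite binary expansion of an arbitrary $s\in[0,1)$ where you first treat finite dyadic rationals and then invoke weak lower semicontinuity of the total variation norm.
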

\begin{proof}
The shifts $\mu^y_{th}$ of conditional measures will be denoted by $\mu^y_t$.
Then $\mu_{th}$ is represented as $\mu_t^y\, |\mu|_Y(dy)$. Hence
we have (see, e.g., \cite[p.~21]{DM})
$$
\|\mu_{th} - \mu\|_{\rm TV} = \int_Y \|\mu^y_t - \mu^y\|_{\rm TV}\, |\mu|_Y(dy).
$$
Hence, for every $\beta<\alpha$ and $t_n = 2^{-n}$, the series
$$
\sum_{n=1}^\infty\int_Y t_n^{-\beta}\|\mu^y_{t_n} - \mu^y\|_{\rm TV}\, |\mu|_Y(dy)
$$
converges. By the monotone convergence theorem we have $|\mu|_Y$-almost everywhere
$$
\sum_{n=1}^\infty t_n^{-\beta}\|\mu^y_{t_n} - \mu^y\|_{\rm TV}<\infty.
$$
Therefore, for $|\mu|_Y$-almost every $y$, there is a number $C(y)$ such that
$$
\|\mu^y_{t_n} - \mu^y\|_{\rm TV}\le C(y)t_n^\beta
$$
for every $n\in\mathbb{N}$. For any number $s\in[0, 1)$
take numbers $\varepsilon_n\in\{0, 1\}$ such that $s = \sum_{n=1}^\infty \varepsilon_n2^{-n}$.
Assume that $\varepsilon_{n_0}$ is the first nonzero number in this expansion.
Then, setting
$$
s_n = \sum_{k=1}^n\varepsilon_n2^{-n}
\quad \hbox{and} \quad  \mu^y_{s_0}=\mu^y,
$$
for this number $s$ we have
\begin{align*}
\|\mu^y_s - \mu^y\|_{\rm TV} &= \Bigl\|\sum_{n=n_0}^\infty\mu^y_{s_n} - \mu^y_{s_{n-1}}\Bigr\|_{\rm TV}
\\
&
\le
\sum_{n=n_0}^\infty\|\mu^y_{s_n} - \mu^y_{s_{n-1}}\|_{\rm TV}
=\sum_{n=n_0}^\infty\|\mu^y_{\varepsilon_n 2^{-n}} - \mu^y\|_{\rm TV}
\le
C(y)\sum_{n=n_0}^\infty 2^{-\beta n}.
\end{align*}
Let us observe that
$$
\sum_{n=n_0}^\infty 2^{-\beta n} =
2^{-n_0\beta}(1 - 2^{-\beta})^{-1}\le (1 - 2^{-\beta})^{-1}s^{\beta}.
$$
Therefore, for any number $s\in \mathbb{R}$ we have
$$
\|\mu^y_s - \mu^y\|_{\rm TV}\le \max\{2, (1 - 2^{-\beta})^{-1}C(y)\} |s|^\beta,
$$
which is the required property for the measure $\mu^y$.
\end{proof}


\begin{thebibliography}{99}

\bibitem{AdamsF}
 R.A. Adams,  J.J.F. Fournier, Sobolev spaces.
2nd ed. Academic Press,  New York, 2003.

\bibitem{AFP}
 L. Ambrosio,   N. Fusco, D. Pallara,
Functions of bounded variation and free discontinuity problems,
Clarendon Press, Oxford University Press, New York, 2000.


\bibitem{AMMP1}
 L. Ambrosio,  M.  Miranda(jr.),  S. Maniglia, D. Pallara,
BV functions in abstract Wiener spaces, J.~Funct. Anal. 258 (3) (2010) 785--813.


\bibitem{AMMP2}
 L. Ambrosio,  M. Miranda,  S. Maniglia, D. Pallara,
 Towards a theory of BV functions in abstract Wiener
spaces, Physica~D: Nonlin. Phenom. 239 (15) (2010) 1458--1469.


\bibitem{BIN}
O.V. Besov, V.P. Il'in, S.M. Nikolski{\u{\i}},
Integral representations of functions and imbedding theorems,
V.~I,~II.
Winston \& Sons, Washington; Halsted Press, New York -- Toronto
-- London, 1978, 1979.


\bibitem{GM}
V.I. Bogachev, Gaussian measures, Amer. Math. Soc., Providence, Rhode Island, 1998.

\bibitem{mera}
V.I. Bogachev, Measure theory, V. 1, 2. Springer, New York, 2007.

\bibitem{DM}
V.I. Bogachev,
Differentiable measures and the Malliavin calculus,
Amer. Math. Soc., Providence, Rhode Island, 2010.

\bibitem{B14G}
V.I. Bogachev,  Gaussian measures on infinite-dimensional spaces. In:
Real and Stochastic Analysis. Current Trends (M.M. Rao ed.), pp. 1--83. World Sci., Singapore, 2014.

\bibitem{B14}
V.I. Bogachev,  Sobolev classes on infinite-dimensional spaces.
Geometric Measure Theory and Real Analysis (L.~Ambrosio ed.), pp.~1--56,
Publications of the Scuola Normale Superiore, V.~17,
Pisa, Edizioni della Normale, 2014.

\bibitem{B16}
V.I. Bogachev,
      Distributions of polynomials on multidimensional and infinite-dimen\-sio\-nal
     spaces with measures. Uspehi Mat. Nauk 71 (4) (2016) 107--154
     (in Russian); English transl.: Russian Math. Surveys 71 (4)
     (2016) 703--749.

\bibitem{BKol}
 V.I. Bogachev, A.V. Kolesnikov,  The Monge--Kantorovich problem:
achievements, connections, and perspectives, Russian Math. Surveys
67 (5) (2012) 785--890.

\bibitem{BKZd}
 V.I. Bogachev,  G.I. Zelenov, E.D. Kosov,
 Membership of distributions of polynomials in the Nikolskii--Besov class,
 Doklady Akademii Nauk 469 (6) (2016) 651--655 (in Russian);
English transl.: Dokl. Math. 94 (1) (2016) 453--457.

\bibitem{BKZ}
V.I. Bogachev, E.D. Kosov, G.I. Zelenov,
Fractional smoothness of distributions of
polynomials and a fractional analog of the
Hardy--Landau--Littlewood inequality,
Trans. Amer. Math. Soc. (2017).
http://arxiv.org/abs/1602.05207.

\bibitem{BReb1}
 V.I. Bogachev,  E.A. Rebrova, Functions of bounded variation on infinite-dimensional
spaces with measures,
Dokl. Akad. Nauk 449 (2) (2013),
  131--135 (in Russian); English transl.: Dokl. Math. 87 (2) (2013) 144--147.

\bibitem{BReb2}
 V.I. Bogachev,  A.Yu. Pilipenko, E.A. Rebrova,
Classes of functions of bounded variation on infinite-dimensional domains.
Dokl. Akad. Nauk 451 (2) (2013)
127--131 (in Russian); English transl.: Dokl. Math. 88 (1) (2013) 391--395.

\bibitem{BWSH}
 V.I. Bogachev,  F.-Y. Wang,  A.V. Shaposhnikov,
On inequalities relating the Sobolev and Kantorovich norms,
Dokl. Akad. Nauk 468 (2) (2016) 131--133 (in Russian);
English transl.: Dokl. Math. 93 (3) (2016) 256--258.

\bibitem{DeG}
E. De Giorgi,
Su una teoria generale della misura $(r-1)$-dimensionale in uno spazio ad $r$ dimensioni,
Annali di Matematica Pura ed Applicata 36 (1) (1954) 191--213.

\bibitem{FH}
 M. Fukushima, M. Hino,
 On the space of BV functions and a related stochastic
calculus in infinite dimensions, J. Funct. Anal. 183 (1) (2001) 245--268.

\bibitem{Kos}
E.D. Kosov, Fractional smoothness
of images of logarithmically concave measures under polynomials, ArXiv preprint.

\bibitem{Led}
M. Ledoux,  Isoperimetry and Gaussian analysis,
 Lecture Notes in Math. 1648 (1996) 165--294.

 \bibitem{Led-b}
M. Ledoux,
The concentration of measure phenomenon, Amer. Math. Soc., Providence, Rhode Island, 2001.

\bibitem{Leoni}
G. Leoni,  A first course in Sobolev spaces,
Amer. Math. Soc., Rhode Island, Providence, 2009.

\bibitem{Nikit1}
 E.V. Nikitin,
   Fractional order Sobolev clsses on infinite-dimensional spaces,
Dokl. Russian Acad. Sci. 452 (2) (2013) 130--135 (in Russan);
English tranls.: Dokl. Math. 88 (2) (2013) 518--523.

\bibitem{Nikit2}
E.V. Nikitin,
 Besov classes on infinite-dimensional spaces.
Matem. Zametki 93 (6) (2013) 951--953 (in Russan); English tranls.: Math. Notes 93 (6) (2013)
936--939.

\bibitem{Nikit3}
E.V. Nikitin,
 Comparison of two definitions of  Besov classes on infinite--dimensional spaces,
Matem. Zametki 95 (1) (2014) 150--153 (in Russan); English tranls.: Math. Notes
95 (1) (2014) 133--135.

\bibitem{Nikol77}
S.M. Nikol'skii, Approximation of functions of several variables and imbedding theorems,
Transl. from the Russian.  Springer-Verlag, New York -- Heidelberg, 1975.

\bibitem{NourPol}
I. Nourdin, G. Poly,
Convergence in total variation on Wiener chaos.
Stochastic Processes  Appl. 123 (2) (2013) 651--674.

\bibitem{Pineda}
 E. Pineda, W. Urbina,
Some results on Gaussian Besov--Lipschitz spaces
and Gaussian Triebel--Lizorkin spaces,
J.~Approx. Theory 161 (2) (2009) 529--564.

\bibitem{Shig}
 I. Shigekawa,  Stochastic analysis, Amer. Math. Soc., Providence, Rhode Island, 2004.

\bibitem{Stein}
E. Stein,
Singular integrals and differentiability properties of functions,
Princeton University Press, Princeton, 1970.

\bibitem{Taib}
M.H. Taibleson,
On the theory of Lipschitz spaces of distributions on Euclidean n-space. I. Principal properties,
J. Math. Mech. 13 (1964) 407--479.

\bibitem{Triebel}
H. Triebel,   Theory of function spaces, II, Birkh\"auser Verlag, Basel, 1992.

\bibitem{Watanabe}
S. Watanabe,
 Fractional order Sobolev spaces on Wiener space,
Probab. Theory Relat. Fields 95 (1993) 175--198.

\bibitem{Ziemer}
 W. Ziemer,  Weakly differentiable functions, Springer-Verlag, New York -- Berlin, 1989.

\end{thebibliography}
\end{document}